\documentclass[11pt, a4paper]{amsart}
\usepackage[latin1]{inputenc}
\usepackage{amssymb,amsmath,amsthm}
\usepackage{enumerate}
\usepackage{xcolor}
\usepackage{nicefrac}


\usepackage{cases}
\usepackage{hyperref}

\usepackage[mathcal]{eucal}

\addtolength{\hoffset}{-1cm}
\addtolength{\textwidth}{2cm}
\addtolength{\voffset}{-1cm}
\addtolength{\textheight}{2cm}

\newcommand{\N}{\mathbb N}
\newcommand{\Z}{{\mathbb Z}} 
\newcommand{\F}{{\mathbb F}}

\DeclareMathOperator{\hdim}{hdim}
\DeclareMathOperator{\hspec}{hspec}
\DeclareMathOperator{\dens}{den}
\DeclareMathOperator{\dspec}{dspec}
\DeclareMathOperator{\dd}{d}

\DeclareMathOperator{\RG}{RG}
\DeclareMathOperator{\wt}{wt}

\newtheorem{Thm}{Theorem}[section]
\newtheorem{Prop}[Thm]{Proposition}
\newtheorem{Cor}[Thm]{Corollary}
\newtheorem{Lem}[Thm]{Lemma}

\newtheorem*{Thm-no-nr}{Interval Theorem}

\theoremstyle{definition}
\newtheorem{Defi}[Thm]{Definition}
\newtheorem{Rmk}[Thm]{Remark}
\newtheorem{Qu}[Thm]{Question}

\newtheorem*{Defi*}{Definition}

\theoremstyle{remark}
\newtheorem*{Not}{Notation}
\newtheorem{Ex}[Thm]{Example}

\numberwithin{equation}{section}


\title[Pro-$p$ groups and Hausdorff dimension]{Pro-$p$ groups of
  positive rank gradient and Hausdorff dimension}

\subjclass[2010]{20E18, 
  20E07, 
  20F40
}

\keywords{Pro-$p$ groups, rank gradient, free groups,
    Demushkin groups, Hausdorff dimension, normal subgroups, finitely
  generated subgroups,  free Lie algebras, free
  restricted Lie algebras}

\author[O.\ Garaialde Oca\~na]{Oihana Garaialde Oca\~na}

\address{Matematika Saila,
Euskal Herriko Unibertsitatearen Zientzia eta Teknologia Fakultatea,
 posta-kutxa 644, 48080 Bilbo, Spain}
\email{oihana.garayalde@ehu.eus}

\author[A.\ Garrido]{Alejandra Garrido}
\address{School of Mathematical \& Physical Sciences\\
	University of Newcastle\\
	University Drive\\
	Callaghan, 2308, NSW, Australia}
\email{alejandra.garrido@newcastle.edu.au}

\author[B.\ Klopsch]{Benjamin Klopsch}
\address{Heinrich-Heine-Universit\"{a}t D\"{u}sseldorf\\
	Mathematisches Institut\\
	Universit\"{a}tsstr. 1\\
	40225\\
	D\"{u}sseldorf, Germany}
\email{klopsch@math.uni-duesseldorf.de}

\thanks{The first author was supported by the Basque Government
  postdoctoral fellowship POS--2017--2--0031. The second author
  gratefully acknowledges the support of an Alexander von Humboldt
  Fellowship.  The research was partially conducted in the framework of the
  DFG-funded research training group ``GRK 2240: Algebro-Geometric
  Methods in Algebra, Arithmetic and Topology''.}


\begin{document}
	
\begin{abstract}
  Let $G$ be a finitely generated pro-$p$ group of positive rank
  gradient.  Motivated by the study of Hausdorff dimension, we show
  that finitely generated closed subgroups $H$ of infinite index in
  $G$ never contain any infinite subgroups $K$ that are subnormal
  in~$G$ via finitely generated successive quotients.  This pro-$p$
  version of a well-known theorem of Greenberg generalises similar
  assertions that were known to hold for non-abelian free pro-$p$
  groups, non-soluble Demushkin pro-$p$ groups and other related
  pro-$p$ groups.  The result we prove is reminiscent of Gaboriau's
  theorem for countable groups with positive first $\ell^2$-Betti
  number, but not quite a direct analogue.
		
  The approach via the notion of Hausdorff dimension in pro-$p$ groups
  also leads to our main results.  We show that every finitely
  generated pro-$p$ group $G$ of positive rank gradient has full
  Hausdorff spectrum $\hspec^\mathcal{F}(G) = [0,1]$ with respect to
  the Frattini series~$\mathcal{F}$.  Using different, Lie-theoretic
  techniques we also prove that finitely generated
    non-abelian free pro-$p$ groups and non-soluble Demushkin groups
    $G$ have full Hausdorff spectrum $\hspec^\mathcal{Z}(G) = [0,1]$
    with respect to the Zassenhaus series~$\mathcal{Z}$.  This
  resolves a long-standing problem in the subject of Hausdorff
  dimensions in pro-$p$ groups.

  In fact, the results about full Hausdorff spectra hold more
  generally for finite direct products of finitely generated pro-$p$
  groups of positive rank gradient and  for mixed finite
    direct products of finitely generated non-abelian free pro-$p$
    groups and non-soluble Demushkin groups, respectively.  The
  results with respect to the Frattini series generalise further to
  Hausdorff dimension functions with respect to arbitrary iterated
  verbal filtrations, for mixed finite direct products of finitely generated
  non-abelian free pro-$p$ groups and non-soluble Demushkin pro-$p$
  groups.

 Finally, we determine the normal Hausdorff spectra of
  such direct products.
\end{abstract}
	
\maketitle

	
\section{Introduction}  \label{sec:introduction}

We study the subgroup structure of finitely generated pro-$p$ groups
that resemble free pro-$p$ groups.  Here and throughout, $p$ denotes a
prime number and, as usual in the context of profinite groups,
subgroups are tacitly assumed to be closed. To make this
  explicit, we write $H \le_\mathrm{c} G$, respectively
$H \le_\mathrm{o} G$, to indicate that a subgroup $H$ is closed,
respectively open, in~$G$. Furthermore, group-theoretic notions, such
as generating properties, are considered topologically.

The classical Schreier index formula motivates the following notion,
first introduced in~\cite{La05}.  The (absolute) \emph{rank gradient}
of a finitely generated pro-$p$ group $G$ is
\[
\RG(G) = \inf_{H \le_\mathrm{o} G} \frac{\dd(H)-1}{\lvert G : H
  \rvert},
\]
where $\dd(H)$ denotes the minimal number of generators of a pro-$p$
group $H$ and the infimum is taken over all open subgroups
$H \le_\mathrm{o} G$.  We recall that in the present context $\RG(G)$
coincides with the (relative) \emph{rank gradient}
\[
\RG(G,\mathcal{S}) = \lim_{i \to \infty} \frac{\dd(G_i)-1}{\lvert G :
  G_i \rvert}
\]
with respect to any descending chain
$\mathcal{S} \colon G \supseteq G_1 \supseteq G_2 \supseteq \ldots$ of
open subgroups that provide a base of neighbourhoods of the identity
element; the limit exists, because the sequence of rational numbers is
monotone decreasing and bounded.  In practice, it is convenient to
work with descending chains of open normal subgroups $G_i$,
$i \in \N_0$, that intersect in the identity; we call such chains
\emph{filtration series}, or filtrations for short.
	
The class of pro-$p$ groups of positive rank gradient includes, for
instance:
\begin{itemize}
\item finitely generated non-abelian free pro-$p$ groups;
\item non-soluble Demushkin pro-$p$ groups, which occur
         as analogues of surface groups and as Galois groups of
         maximal $p$-extensions of local fields containing $p$th roots of
         unity (compare~\cite{Se95});
\item pro-$p$ analogues of limit groups, defined and studied in
        \cite{KoZa11};
\item free pro-$p$ products of finitely many non-trivial finitely generated
        pro-$p$ groups, with at least two factors and excluding
        $C_2 \amalg C_2$ (compare \cite[Thm.~2.3]{Pa15});
\item pro-$p$ completions of groups of positive $p$-deficiency, as
       constructed in~\cite{SP12} or arising from similar constructions
       in~\cite[Cor.~1.2]{Os11};
\item pro-$p$ completions of certain Golod--Shafarevich groups, as
        seen in~\cite{BuTh11}.
\end{itemize}

Recall that $K \le_\mathrm{c} G$ is \emph{subnormal} in $G$ if there
exists a finite descending sequence of subgroups
$G = K_0 \trianglerighteq K_1 \trianglerighteq \ldots \trianglerighteq
K_s = K$,
with $K_i$ normal in $K_{i-1}$ for $i \in \{1,\ldots, s\}$.  A
well-known theorem of Greenberg~\cite{Gr60} implies that a finitely
generated subgroup $\Delta$ of a finitely generated, non-abelian free
discrete group $\Gamma$ has finite index in $\Gamma$ if and only if
$\Delta$ contains a non-trivial subnormal subgroup of~$\Gamma$.  Our
first result is a version of Greenberg's theorem in the context of pro-$p$ groups.
	 
\begin{Defi*}
  We say that a subgroup $K \le_\mathrm{c} G$ is \emph{subnormal via
    finitely generated successive quotients} if there is a subnormal
  chain
  $G = K_0\trianglerighteq K_1\trianglerighteq \ldots \trianglerighteq
  K_s = K$
  such that each factor $K_{i-1}/K_i$, for $i \in \{1,\ldots,s\}$, is
  finitely generated.
\end{Defi*}

\begin{Thm}\label{Thm:posRG_normal_sgps_inf_gen}
  Let $G$ be a finitely generated pro-$p$ group of positive rank
  gradient, and let $H \le_\mathrm{c} G$ be a finitely generated
  subgroup.  Then the following are equivalent:
  \begin{enumerate}[\rm (1)]
  \item $H$ contains an infinite subgroup of~$G$ that is subnormal via
    finitely generated successive quotients;
  \item $H$ contains an infinite normal subgroup of~$G$;
  \item $H$ is open in~$G$.  
  \end{enumerate}
  In particular, every subgroup $K \le_\mathrm{c} G$ that is subnormal
  via finitely generated successive quotients satisfies: $K$ is
  finitely generated if and only if $K$ is either finite or open.
\end{Thm}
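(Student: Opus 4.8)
The three implications split into an elementary part and a substantial one. For $(3)\Rightarrow(2)$, if $H$ is open then its normal core $\bigcap_{g\in G} H^g$ is an intersection of finitely many open subgroups, hence open, infinite and normal in $G$, and it is contained in $H$. For $(2)\Rightarrow(1)$, an infinite normal subgroup $K$ yields the length-one subnormal chain $G\trianglerighteq K$, whose single quotient $G/K$ is finitely generated because $G$ is; so $K$ is subnormal via finitely generated successive quotients. The concluding \emph{in particular} is then immediate: if $K$ is such a subgroup and is moreover finitely generated, then taking $H=K$ in the equivalence shows that $K$ infinite forces $K$ open, while conversely finite and open subgroups are finitely generated. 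Thus everything reduces to $(1)\Rightarrow(3)$.

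I would prove $(1)\Rightarrow(3)$ by induction on the defect $s$ of a subnormal chain $G=K_0\trianglerighteq\cdots\trianglerighteq K_s=K$ with finitely generated quotients, where $K\le_\mathrm{c} H$ is infinite; the base case $s\le 1$ is the normal case treated below. For the inductive step, $K$ is subnormal of defect $s-1$ in $K_1$ via finitely generated successive quotients. If $K_1$ is open in $G$, then $K_1$ again has positive rank gradient, by the multiplicativity $\RG(K_1)=\lvert G:K_1\rvert\,\RG(G)$ for open subgroups; moreover $H\cap K_1$ is finitely generated, being open in $H$, and contains the infinite subgroup $K$, so the inductive hypothesis applied inside $K_1$ makes $H\cap K_1$ open in $K_1$, whence $H$ is open in $G$. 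The delicate configuration is when $K_1$ has infinite index, where the finitely generated successive quotients are exactly what let one descend the chain while staying inside a finitely generated subgroup; here the aim is to produce, from the chain together with the normal case, an honest infinite normal subgroup of $G$ inside $H$, thereby reducing to $(2)$.

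The heart of the matter is the normal case: given $K\trianglelefteq G$ infinite with $K\le_\mathrm{c} H$ and $H$ finitely generated, show that $H$ is open. Suppose not, so $\lvert G:H\rvert=\infty$. Fix a filtration $G=G_0\supseteq G_1\supseteq\cdots$ and set $U_i=HG_i$. Since $\bigcap_i U_i=H$ has infinite index we get $\lvert G:U_i\rvert\to\infty$, and the defining inequality $\RG(G)\le(\dd(U_i)-1)/\lvert G:U_i\rvert$ then forces $\dd(U_i)\to\infty$. To contradict this I would bound $\dd(U_i)$ from above, and the clean input is that $K$ contributes only boundedly many generators to each $U_i$: because $H\le U_i$ we have $\Phi(H)\le\Phi(U_i)$, so the image of $K$ in $U_i/\Phi(U_i)$ is a quotient of its image in $H/\Phi(H)$ and hence has dimension at most $\dd(H)$. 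This gives $\dd(U_i)\le \dd(U_i/K)+\dd(H)$, transporting the problem to the quotient $\bar G=G/K$, in which $\bar H=H/K$ is finitely generated of infinite index and one must control the ranks $\dd(\bar H\bar G_i)$ against the indices $\lvert\bar G:\bar H\bar G_i\rvert$.

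The main obstacle is precisely this normal case, and within it the possibility that $K$ is \emph{not} finitely generated as a pro-$p$ group, only cheap to generate inside the finitely generated $H$; one cannot simply invoke the vanishing of rank gradient for groups possessing an infinite finitely generated normal subgroup of infinite index. The tension to exploit is that $K$ is at once infinite, so that $\lvert K:K\cap G_i\rvert\to\infty$ and the $U_i$ are deep, and boundedly generated inside $H$ by the estimate above; positivity of $\RG(G)$ must be played against both. I expect the decisive step to be a rank-gradient estimate in $\bar G=G/K$ that converts the bounded generation of $K$ into sublinear growth of $\dd(\bar H\bar G_i)$, contradicting $\RG(G)>0$. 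Reducing the general subnormal case to this — that is, extracting from a point-fixing infinite subnormal subgroup an infinite \emph{normal} one inside $H$ — is the other point requiring care, since finite examples (a reflection is subnormal of defect two in the dihedral group of order eight and fixes points without being normal) show that pure permutation theory does not suffice, and infiniteness together with positive rank gradient must do the work.
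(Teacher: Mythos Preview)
Your treatment of $(3)\Rightarrow(2)\Rightarrow(1)$ and of the final ``in particular'' clause is fine. The gap is in $(1)\Rightarrow(3)$, and you have in fact located it yourself: your normal case does not close, and without it the induction never gets off the ground.

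The decomposition you propose, $\dd(U_i)\le \dd(U_i/K)+\dd(H)$ with $U_i=HG_i$, is a valid inequality, but it does not lead anywhere. To contradict $\dd(U_i)\ge \RG(G)\,\lvert G:U_i\rvert$ you would need $\dd(U_i/K)=o(\lvert G:U_i\rvert)$, i.e.\ that the rank gradient of $G/K$ along the induced chain vanishes. There is no reason for this: the Schreier bound gives $\dd(U_i/K)\le(\dd(G/K)-1)\lvert G:U_i\rvert+1$, and nothing you have assumed rules out linear growth. Your inequality spends only the finite generation of $H$ (via the constant $\dd(H)$) and does not use either $\lvert G:H\rvert=\infty$ or $\lvert K\rvert=\infty$ in a way that produces decay.

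The fix is to swap the roles in the decomposition. For an open normal $U\trianglelefteq_\mathrm{o} G$ write
\[
\dd(U)\le \dd(U\cap H)+\dd\bigl(U/(U\cap K)\bigr),
\]
which holds because $U\cap K\le U\cap H$ and lifts of a generating set for $U/(U\cap K)$ together with $U\cap H$ generate~$U$. Now each term is controlled by a Schreier bound applied in the right ambient group. Since $U\cap H\le_\mathrm{o} H$,
\[
\frac{\dd(U\cap H)-1}{\lvert G:U\rvert}\le\frac{(\dd(H)-1)\,\lvert H:U\cap H\rvert}{\lvert G:U\rvert}=\frac{\dd(H)-1}{\lvert G:HU\rvert}\xrightarrow{}0
\]
as $\lvert G:HU\rvert\to\infty$, which uses $\lvert G:H\rvert=\infty$. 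And since $U/(U\cap K)\cong UK/K\le_\mathrm{o} G/K$,
\[
\frac{\dd(U/(U\cap K))}{\lvert G:U\rvert}\le\frac{(\dd(G/K)-1)\,\lvert G:UK\rvert+1}{\lvert G:U\rvert}\le\frac{\dd(G/K)}{\lvert UK:U\rvert}\xrightarrow{}0
\]
as $\lvert KU:U\rvert\to\infty$, which uses $\lvert K\rvert=\infty$. Together these force $(\dd(U)-1)/\lvert G:U\rvert\to 0$, contradicting $\RG(G)>0$. This is exactly the Ab\'ert--Nikolov/Shusterman argument the paper records as its second proof.

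The same decomposition dispatches the general subnormal case in one stroke, so your induction on the defect is unnecessary: for $U\trianglelefteq_\mathrm{o} G$ one has
\[
\dd(U)\le \dd(U\cap H)+\sum_{i=1}^s \dd\bigl((U\cap K_{i-1})/(U\cap K_i)\bigr),
\]
and each summand is bounded by $\dd(K_{i-1}/K_i)\,\lvert G:KU\rvert$, whence the whole sum divided by $\lvert G:U\rvert$ tends to $0$ as $\lvert KU:U\rvert\to\infty$. Your attempt to ``produce an honest infinite normal subgroup of $G$ inside $H$'' from the chain is neither needed nor, as far as I can see, achievable in general.

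For comparison, the paper's primary proof takes a different route altogether: it shows that finitely generated infinite-index subgroups have Hausdorff dimension~$0$ and infinite normal subgroups have Hausdorff dimension~$1$ with respect to the Frattini series, then descends the subnormal chain using the fact that passing to a normal subgroup with finitely generated quotient preserves Hausdorff dimension. Either argument works; the point is that your specific inequality does not.
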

		
Theorem~\ref{Thm:posRG_normal_sgps_inf_gen} generalises, for instance,
results of Lubotzky~\cite[Prop.~3.3]{Lu82}, Kochloukova
\cite[Prop.~3.13]{Ko13}, and Snopce and
Zalesskii~\cite[Thm.~3.1]{SnZa16}.  We thank Mark Shusterman for
alerting us to his work on pro-$p$ groups of hereditarily linearly
increasing rank, where he already obtains closely related pro-$p$
versions of Greenberg's theorem; compare \cite[Thm.~2.7 and
Cor.~2.8]{Sh17}.  While we discovered
Theorem~\ref{Thm:posRG_normal_sgps_inf_gen} in a different way, it can
also easily be proved by adapting Shusterman's approach, the key
ingredient being a simple, but effective argument of Ab\'ert and
Nikolov~\cite[Prop.~13]{AbNi12} (where the condition
$\lvert N \rvert = \infty$ should be added).  The point of view that
an assumption on the increase in the number of generators upon passing
to finite index subgroups (e.g. positive rank gradient) forms a good
framework for proving generalisations of Greenberg's theorem is
already expressed in~\cite{Sh17}.  It remains an interesting problem
to find out which consequences of the more special results mentioned
above hold, in fact, for all pro-$p$ groups of positive rank gradient.
Furthermore, Theorem~\ref{Thm:posRG_normal_sgps_inf_gen} is
reminiscent of a theorem of Gaboriau~\cite[Thm.~6.8]{Ga02} for
countable groups with positive first $\ell^2$-Betti number; possible
connections between these notions and results are discussed in
Section~\ref{subsec:Gaboriau}.  We are grateful to Steffen Kionke for
drawing our attention to Gaboriau's theorem.

Our proof of Theorem~\ref{Thm:posRG_normal_sgps_inf_gen}, indeed our
motivation for proving it, stems from the study of Hausdorff
dimensions of subgroups in pro-$p$ groups of positive rank gradient.
The concept of Hausdorff dimension is commonly employed to measure the
sizes of fractals and subsets of general metric spaces;
see~\cite{Fa14,Ro70}.  In~\cite{BaSh97}, Barnea and Shalev initiated
the use of Hausdorff dimension to study the distribution of closed
subgroups in finitely generated pro-$p$ groups; see also Shalev's
survey article~\cite[\S 4]{Sh00}.  This idea has led to many
interesting applications; compare the references given
in~\cite{KlThZR19}.

\smallskip

Let $G$ be a finitely generated infinite pro-$p$ group with a
filtration $\mathcal{S} \colon G_i$, $i \in \N_0$.  Then,
$\mathcal{S}$ induces a translation-invariant metric on~$G$; the
distance between $x, y\in G$ is
$d_\mathcal{S}(x,y) = \inf \{ \lvert G : G_i \rvert^{-1} \mid x^{-1}y
\in G_i \}$.
It is known that the Hausdorff dimension of a closed subgroup
$H \le_\mathrm{c} G$ with respect to $d_\mathcal{S}$ can be obtained
as a lower limit of logarithmic densities:
\[
\hdim_G^\mathcal{S}(H) = \varliminf_{ i\to \infty} \frac{\log_p \lvert
  HG_i : G_i \rvert}{\log_p \lvert G : G_i \rvert} = \varliminf_{i\to
  \infty} \frac{\log_p \lvert H : H \cap G_i \rvert}{\log_p \lvert G :
  G_i \rvert}.
\]
The \emph{Hausdorff spectrum} of $G$ with respect to $\mathcal{S}$ is
the set
\[
\hspec^\mathcal{S}(G) = \{\hdim_G^\mathcal{S}(H) \mid H \le_\mathrm{c}
G\} \subseteq [0,1].
\]
	
The Hausdorff dimension function and the resulting Hausdorff spectrum
depend significantly on the choice of the filtration~$\mathcal{S}$.
With respect to general filtration series rather unexpected features
can be observed, for a wide range of groups, including the groups
considered in this paper; compare~\cite[Thm.~1.5]{KlThZR19}.
Consequently, one typically concentrates on Hausdorff dimension with
respect to standard group-theoretic filtration series of the pro-$p$
group~$G$, such as: the \emph{Frattini series}~$\mathcal{F}$, the
\emph{Zassenhaus series}~$\mathcal{Z}$ (also known as the
\emph{dimension subgroup} or \emph{Jennings--Lazard--Zassenhaus
  series}), or the \emph{$p$-power series}~$\mathcal{P}$; these are
defined as follows:
\begin{align*}
  \mathcal{F} \colon & \quad \Phi_0(G) = G &&  \text{and} \quad 
                                              \Phi_i(G)=[\Phi_{i-1}(G),\Phi_{i-1}(G)]\Phi_{i-1}(G)^p
                                              \;\;\text{for }  i\geq 1, \\
  \mathcal{Z} \colon & \quad Z_1(G) = G, &&  \text{and} \quad Z_i(G)
                                            = Z_{\lceil
                                            i/p\rceil}(G)^p \prod_{1\leq
                                            j<i}[Z_{j}(G),Z_{i-j}(G)] \;\;
                                            \text{for } i\geq 2, \\
  \mathcal{P} \colon & \quad \Pi_i(G) = G^{p^i} && \hspace*{-.7cm} =
                                                   \langle x^{p^i} \mid x\in G\rangle
                                                   \;\; \text{for }  i\geq 0.
\end{align*}
Another natural choice of filtration, the lower central $p$-series,
leads to somewhat unexpected effects in terms of Hausdorff dimension;
compare~\cite {KlThZR19}.  One of the key features of Hausdorff
dimension is that it agrees with the usual dimension of smooth
submanifolds of euclidean space.  Pleasingly, something similar holds
in the $p$-adic setting: closed subgroups of $p$-adic analytic pro-$p$
groups, being $p$-adic analytic themselves, yield only finitely many
values of Hausdorff dimensions with respect to $\mathcal{F}$,
$\mathcal{Z}$ and~$\mathcal{P}$; see~\cite[Thm.~1.1]{BaSh97} and
\cite[Prop.~1.5]{KlThZR19}.  Whether this property characterises
$p$-adic analytic groups among finitely generated pro-$p$ groups is a
long-standing open problem; see~\cite{KlThZR19} for a partial positive
answer, valid for soluble groups.  While the Hausdorff spectra of
finitely generated non-abelian free pro-$p$ groups were perhaps
suspected to be quite large, no proof was found to show that they are
merely infinite, with respect to $\mathcal{F}$, $\mathcal{Z}$
or~$\mathcal{P}$, say.

In this paper we confine ourselves -- similar to Shalev~\cite[\S
4.6]{Sh00} -- to the Frattini series, the Zassenhaus series and
related filtration series.  For the groups we study, the $p$-power
series is beyond current knowledge; even estimating the indices
$\lvert F : F^{p^i} \rvert$ of the terms of the series, for finitely
generated non-abelian free pro-$p$ groups~$F$, amounts to estimating
the corresponding Burnside numbers which are largely unknown.
		
Our main results solve problems highlighted in~\cite{BaSh97}
and~\cite[\S 4.6]{Sh00}; in particular, they finally remove
non-abelian free pro-$p$ groups from the list of potential
counter-examples to the desired characterisation of $p$-adic analytic
pro-$p$ groups $G$ by the finiteness of their Hausdorff spectra, with
respect to $\mathcal{F}$ or~$\mathcal{Z}$.

For the Frattini series $\mathcal{F}$, we can, in fact, remove a much
larger class of potential counter-examples, by virtue of the
techniques used to prove Theorem~\ref{Thm:posRG_normal_sgps_inf_gen}.

\begin{Thm}\label{Thm:full-spectrum-Frattini}
  Let $G = H_1 \times \ldots \times H_r$ be a non-trivial finite direct
  product of finitely generated pro-$p$ groups $H_j$ of positive rank
  gradient.  Then $G$ has full Hausdorff spectrum
  $\hspec^{\mathcal{F}}(G) = [0,1]$ with respect to the Frattini
  series~$\mathcal{F}$.
\end{Thm}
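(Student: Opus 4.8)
The plan is to realise every value $\alpha\in[0,1]$ as the Hausdorff dimension of a suitable closed subgroup, the essential input being the very fast growth of the Frattini layers forced by positive rank gradient. Write $m_i=\log_p\lvert G:\Phi_i(G)\rvert$ and $n_i=m_i-m_{i-1}=\dd(\Phi_{i-1}(G))$, so that the layer $V_i=\Phi_{i-1}(G)/\Phi_i(G)$ is elementary abelian of dimension $n_i$. For a single factor with $\RG(G)>0$, the defining estimate of the rank gradient (the same Ab\'ert--Nikolov-type bound underlying Theorem~\ref{Thm:posRG_normal_sgps_inf_gen}) gives $n_i-1\ge\RG(G)\,p^{m_{i-1}}$; hence $m_{i-1}/m_i\to0$ and, more importantly, $n_i$ exceeds $p^{m_{i-1}}$ up to the constant $\RG(G)$. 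Thus each layer dwarfs all previous ones combined, and is even exponentially larger than the order $p^{h_{i-1}}$ of any partial quotient built from the first $i-1$ layers at a proper density $h_{i-1}\approx\alpha m_{i-1}<m_{i-1}$. I would first record this growth statement and note that it persists for finite direct products, since $\Phi_i(H_1\times\cdots\times H_r)=\Phi_i(H_1)\times\cdots\times\Phi_i(H_r)$ and therefore $m_i(G)=\sum_j m_i(H_j)$ still satisfies $m_{i-1}(G)/m_i(G)\to0$ --- even though $G$ itself need not have positive rank gradient.

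Because the top layer dominates, for any closed $H\le_{\mathrm c}G$ the density $\log_p\lvert H\Phi_i(G):\Phi_i(G)\rvert/m_i$ is, up to $o(1)$, the fraction of the current layer $V_i$ that $H$ captures. This reduces the problem to an existence statement: for $\alpha\in(0,1)$ I want a closed subgroup capturing asymptotically an $\alpha$-fraction of every layer. The plan is to construct it as an inverse limit $H=\varprojlim\bar H_i$ of subgroups $\bar H_i\le G/\Phi_i(G)$, where $\bar H_i$ surjects onto $\bar H_{i-1}$ and meets $V_i$ in a subspace of dimension $\lfloor\alpha n_i\rfloor$. If this can be arranged, then $\log_p\lvert\bar H_i\rvert=\sum_{j\le i}\lfloor\alpha n_j\rfloor$, the density tends to the genuine limit $\alpha$, and $\hdim_G^{\mathcal F}(H)=\alpha$.

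The hard part is the realisability at each step. Given $\bar H_{i-1}$, let $P_i\le G/\Phi_i(G)$ be its full preimage, so that $1\to V_i\to P_i\to\bar H_{i-1}\to1$ with $V_i$ elementary abelian; I must find a subgroup of $P_i$ surjecting onto $\bar H_{i-1}$ and meeting $V_i$ in an $\bar H_{i-1}$-invariant subspace $W$ of dimension $\approx\alpha n_i$, equivalently a complement to $V_i/W$ in $P_i/W$. Non-split extensions and the possibly wild module structure of $V_i$ make exact capture delicate, and this is the step demanding the most care. The resolution is to spend the exponential room established above: since $\dim V_i\ge\RG(G)\,p^{m_{i-1}}$ overwhelms $\lvert\bar H_{i-1}\rvert=p^{h_{i-1}}$ with $h_{i-1}\approx\alpha m_{i-1}$ and $\alpha<1$, there are many invariant subspaces of the prescribed dimension up to error $o(n_i)$, and I would argue that among them one can be chosen for which the extension $P_i/W\to\bar H_{i-1}$ splits --- intuitively, the commutator and $p$-power interactions among a chosen system of lifts fall into far deeper layers and are absorbed. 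The error $o(n_i)$ is harmless, since it perturbs the capture fraction only by $o(1)$ and leaves the limiting density equal to $\alpha$.

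Finally I would assemble the endpoints and the product case. For $\alpha=1$ any open subgroup, e.g.\ $G$ itself, works; for $\alpha=0$ an infinite procyclic subgroup $\overline{\langle x\rangle}\cong\Z_p$ suffices, since $x^{p^i}\in G^{p^i}\subseteq\Phi_i(G)$ forces $\log_p\lvert H:H\cap\Phi_i(G)\rvert\le i$, negligible against the doubly exponential $m_i$. For a direct product $G=H_1\times\cdots\times H_r$ I would run the single-factor construction inside each $H_j$ to obtain $K_j\le_{\mathrm c}H_j$ with $\log_p\lvert K_j:K_j\cap\Phi_i(H_j)\rvert/m_i(H_j)\to\alpha$, and take $H=K_1\times\cdots\times K_r$; here the conjugation action of $G$ on the $j$-th layer factors through $H_j$, so the per-factor exponential room is exactly what the realisability argument requires, and synchronising the factors makes the combined density tend to $\alpha$. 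Together these constructions yield $\hspec^{\mathcal F}(G)=[0,1]$.
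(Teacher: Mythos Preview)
Your approach is genuinely different from the paper's. The paper never attempts a layer-by-layer inverse-limit construction; instead it shows that every factor $H_j$ admits an infinite normal subgroup $N_j$ of infinite index (this requires a separate argument, Proposition~\ref{Prop:pos_rg_implies_not_ji} and Corollary~\ref{cor:has-inf-image}), observes via Propositions~\ref{Prop:fg_subgroups_hdim_0} and~\ref{Prop:normal_subgroups_hdim_1} that $N_j$ has strong Hausdorff dimension~$1$ while all of its finitely generated subgroups have dimension~$0$, and then invokes the Interval Theorem (in the product form of Proposition~\ref{prop:KlThZR19}) to fill in $[0,1]$. So the paper builds subgroups as \emph{ascending unions} of finitely generated ones, whereas you build them as \emph{descending intersections}; your route is more elementary in that it avoids the Interval Theorem and the existence of good normal subgroups, but it is less modular and does not transfer as readily to the other filtrations treated later.

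Your argument has a real gap at the realisability step. The claim that ``commutator and $p$-power interactions among a chosen system of lifts fall into far deeper layers'' is not correct: those interactions land in the current layer $V_i$, not below it, and they constitute exactly the obstruction subspace $W_0=M_i\cap V_i$ for a minimal lift $M_i=\langle\tilde g_1,\ldots,\tilde g_k\rangle$ of $\bar H_{i-1}$. What you actually need is a bound on $\dim W_0$, and this comes from the Schreier index formula applied to $M_i\cap V_i\trianglelefteq M_i$: since $M_i$ is $k$-generated with $k=\dd(\bar H_{i-1})\le h_{i-1}$ and $M_i/(M_i\cap V_i)\cong\bar H_{i-1}$, one gets $\dim W_0=\dd(M_i\cap V_i)\le(k-1)\lvert\bar H_{i-1}\rvert+1\le h_{i-1}\,p^{h_{i-1}}$. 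With $h_{i-1}\le\alpha m_{i-1}$ maintained inductively and $n_i\ge\RG(G)\,p^{m_{i-1}}$, this is $o(n_i)$ precisely because $\alpha<1$; then any $M_i$-invariant subspace $W\supseteq W_0$ of dimension $\lfloor\alpha n_i\rfloor$ (which exists since $p$-groups over $\F_p$ have invariant subspaces of every dimension) yields $\bar H_i=M_iW$ with $\bar H_i\cap V_i=W$. Once this is made explicit, your construction goes through.
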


For the Zassenhaus series~$\mathcal{Z}$, we rely on more
intricate Lie-theoretic methods, which do not readily extend from
free groups to general groups of positive rank gradient.
In order to translate between certain subgroups of a
given free pro-$p$ group $F$ and Lie subalgebras of the associated
free restricted Lie algebra, we make use of power-commutator
factorisations in~$F$, which result from Hall's commutator
collection process.

\begin{Thm}\label{thm:full-spectrum-Zassenhaus-free}
  Let $F$ be a finitely generated non-abelian free pro-$p$ group.
  Then $F$ has full Hausdorff spectrum
  $\hspec^{\mathcal{Z}}(F) = [0,1]$ with respect to the Zassenhaus
  series~$\mathcal{Z}$.
\end{Thm}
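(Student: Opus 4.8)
The plan is to pass to the associated graded restricted Lie algebra and reduce the statement to a purely combinatorial problem about graded restricted Lie subalgebras, which I then solve by an explicit interleaving construction. Write $d=\dd(F)\ge 2$. With respect to the Zassenhaus series $\mathcal{Z}$, the associated graded object $L=\bigoplus_{i\ge1}L_i$ with $L_i=Z_i(F)/Z_{i+1}(F)$ is the free restricted Lie algebra over $\F_p$ on $d$ generators; its restricted universal enveloping algebra is the free associative algebra on $d$ letters, so by the restricted Poincar\'e--Birkhoff--Witt theorem the homogeneous dimensions $c_i=\dim_{\F_p}L_i$ are governed by $\tfrac{1}{1-dt}=\prod_{i\ge1}\big((1-t^{pi})/(1-t^i)\big)^{c_i}$. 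From this I record the two asymptotic facts I will use: $c_i$ grows geometrically with ratio tending to $d$ (in fact $c_i\sim d^i/i$), and consequently the partial sums $D_i=\sum_{j\le i}c_j=\log_p\lvert F:Z_{i+1}(F)\rvert$ satisfy $D_i\sim\tfrac{d}{d-1}\,c_i$ and $D_i/D_{i-1}\to d$.

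Next I set up the dictionary between subgroups and subalgebras. For a closed subgroup $H\le_\mathrm{c} F$ the induced filtration $H\cap Z_i(F)$ has associated graded $\mathrm{gr}\,H=\bigoplus_i (H\cap Z_i(F))/(H\cap Z_{i+1}(F))$, which is a graded restricted Lie subalgebra of $L$, and by the density formula for Hausdorff dimension one has $\hdim_F^{\mathcal{Z}}(H)=\varliminf_i\big(\sum_{j<i}\dim_{\F_p}(\mathrm{gr}\,H)_j\big)/D_{i-1}$. Thus it suffices to establish two things: \textbf{(a)} a realisation lemma, namely that a graded restricted Lie subalgebra $M=\bigoplus_i M_i\le L$ that is free on a set of homogeneous generators is of the form $\mathrm{gr}\,H$ for some closed $H\le_\mathrm{c} F$; and \textbf{(b)} for each $\alpha\in[0,1]$ the construction of such a free $M$ with $\varliminf_i\big(\sum_{j<i}\dim_{\F_p}M_j\big)/D_{i-1}=\alpha$. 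Given (a) and (b) the theorem follows at once, the endpoints being covered also by the trivial cases of an open subgroup (dimension $1$) and a procyclic subgroup (dimension $0$).

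For (a) I choose a homogeneous $\F_p$-basis of $M$, lift each basis vector of degree $e$ to an element of $Z_e(F)\setminus Z_{e+1}(F)$, and let $H$ be the closed subgroup they generate. The inclusion $\mathrm{gr}\,H\supseteq M$ is immediate; the reverse inclusion, and the exact identification of $H\cap Z_i(F)$, is where the power-commutator factorisations coming from Hall's collection process are essential. They furnish a normal form for elements of $F$ as ordered products of prime-power powers of basic commutators, with the $\mathcal{Z}$-filtration degree legible from the factorisation, and this lets me verify that no product of the chosen generators acquires unexpectedly high filtration degree, so that $\mathrm{gr}\,H$ is spanned precisely by the iterated brackets and restricted $p$-powers of the lifted generators, i.e.\ by $M$ (here one uses that $M$ is closed under the bracket and the restricted power operation). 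I expect this realisation lemma to be the main obstacle: it is the step that genuinely exploits the freeness of $F$ through the clean structure of the free associative algebra, and it is exactly what fails to extend to arbitrary pro-$p$ groups of positive rank gradient.

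For (b) I build $M$ by an interleaving argument that exploits the geometric growth recorded above. A proper free restricted Lie subalgebra generated in bounded degree grows at an exponential rate strictly below $d$, so its component dimensions become negligible against $c_i\sim d^i/i$; conversely, introducing sufficiently many new homogeneous free generators restores a growth rate arbitrarily close to $d$. I therefore take $M$ to be free on an infinite set of homogeneous generators placed in carefully chosen degrees, alternating \emph{generator-poor} stretches, during which the partial-sum ratio $\big(\sum_{j\le i}\dim_{\F_p}M_j\big)/D_i$ decreases because its numerator grows sub-geometrically while $D_i$ grows at rate $d$, with \emph{generator-rich} stretches, during which the ratio climbs back toward a fixed value $\beta>\alpha$. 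Tuning the lengths of the poor stretches so that the ratio descends exactly to $\alpha$, and of the rich stretches so that it is pulled back up before ever falling below $\alpha$, yields $\varliminf_i\big(\sum_{j\le i}\dim_{\F_p}M_j\big)/D_i=\alpha$. The one soft point to check is that such a free subalgebra is realisable inside $L$, i.e.\ that its enveloping algebra embeds in the free associative algebra on $d$ letters; this reduces to a coefficientwise domination of Hilbert series and is guaranteed by keeping the cumulative number of generators below the available dimension in each degree.
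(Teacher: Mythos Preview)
Your approach is essentially correct and uses ingredients that all appear in the paper, but it differs from the route the paper actually takes to prove the theorem. The paper's proof is very short: it picks the normal closure $N=\langle x_2,\ldots,x_d\rangle^F$, observes that the associated restricted Lie ideal is nonzero so has density~$1$ (hence $\hdim_F^{\mathcal{Z}}(N)=1$), and then shows that every finitely generated closed subgroup of $N$ has Hausdorff dimension~$0$ by combining two facts: (i) in a free pro-$p$ group every closed subgroup has a \emph{free} associated restricted Lie subalgebra, whence (via the Ershov--Jaikin-Zapirain power-commutator argument) it is finitely generated if and only if the subgroup is; (ii) finitely generated restricted Lie subalgebras of infinite codimension have density~$0$. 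The Interval Theorem then gives $(0,1]\subseteq\hspec^{\mathcal{Z}}(F)$ in one stroke, without ever building an explicit subalgebra of prescribed density.

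Your plan instead constructs, for each $\alpha$, a graded restricted subalgebra of density $\alpha$ and then lifts it to a subgroup. This is more hands-on and closer in spirit to the paper's separate computation of the density spectrum of the free restricted Lie algebra; what you gain is an explicit subgroup realising each $\alpha$, whereas the Interval Theorem only gives existence. Two points to tighten. First, in your realisation step~(a) you write that you lift a homogeneous $\F_p$-\emph{basis} of~$M$; the power-commutator normal-form argument you invoke needs the images to \emph{freely} generate the restricted subalgebra, so you should lift a homogeneous free generating set of~$M$ (which exists since restricted subalgebras of a free restricted Lie algebra are free and admit homogeneous free generators). Second, in~(b) your claim that a proper free restricted subalgebra generated in bounded degree has growth rate strictly below~$d$ is only valid under the extra hypothesis of infinite codimension (finite-codimension subalgebras are finitely generated, proper, and of density~$1$); in your interleaving construction this is automatic, but it should be stated. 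With these adjustments your argument goes through; the paper's route via the Interval Theorem is simply more economical.
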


It is worth comparing Theorem~\ref{thm:full-spectrum-Zassenhaus-free} to
what was previously known about the Hausdorff spectrum
$\hspec^\mathcal{Z}(F)$ of a non-abelian free pro-$p$
group~$F$; see~\cite[\S 4.6]{Sh00}.  Lie-theoretic arguments can be
used to deduce with relative ease that non-trivial normal subgroups of
$F$ have Hausdorff dimension $1$ with respect to~$\mathcal{Z}$.  In
addition, Shalev~\cite[Thm.~4.10]{Sh00} sketched a high-level argument
to locate finitely many rational values of the form $e^{-1}$, for
certain $e \in \N$, in the Hausdorff spectrum $\hspec^\mathcal{Z}(F)$,
but stressed at the time that it was ``unclear how to construct
subgroups of $F$ whose Hausdorff dimension is irrational.''

\smallskip

In addition to free groups, we also consider Demushkin
pro-$p$ groups.  They are, by definition, Poincar\'e duality
pro-$p$ groups of dimension~$2$ and thus can be regarded as pro-$p$
analogues of surface groups; indeed, the pro-$p$ completions of
orientable surface groups are Demushkin groups.  In this way they are
rather close to  finitely generated free pro-$p$ groups.
Demushkin groups also occur naturally as Galois groups in
  algebraic number theory, as mentioned above.  We refer
to~\cite[Sec.~12.3] {Wi98} for the classification of Demushkin groups
of depth $q \neq 2$ and further details; Demushkin groups of depth
$q=2$ are treated in~\cite{La67,Se95}.

The next result applies to mixed finite direct products
  of free and Demushkin pro-$p$ groups, equipped with an iterated
  verbal filtration, and can be established by adapting our proof of
Theorem~\ref{Thm:full-spectrum-Frattini}.

\begin{Defi*}
  A filtration $\mathcal{S} \colon G_i$, $i \in \N_0$, of~$G$ such
  that $G_i$ is a verbal subgroup of $G_{i-1}$ for each~$i \in \N$ is
  called an \emph{iterated verbal filtration}.
\end{Defi*}

Typical examples of iterated verbal filtrations are the Frattini
filtration and the \emph{iterated $p$-power series} defined by
\[
\mathcal{P}^*:\quad \Pi_0^*(G) = G, \quad \text{and} \quad \Pi_i^*(G)
= \Pi_{i-1}^*(G)^p = \langle x^p \mid x \in \Pi_{i-1}^*(G)
\rangle\quad \text{for $i\geq 1$.}
\]

\begin{Thm} \label{thm:full_spec_iterated_verbal} Let
  $G = E_1 \times \ldots \times E_r$ be a finite direct product of
  finitely generated pro-$p$ groups $E_j$, each of which is either
  free or Demushkin.  Suppose that at least one factor is non-soluble.
  Let $\mathcal{S} \colon G_i$, $i \in \N_0$, be an iterated verbal
  filtration of~$G$.  Then $G$ has full Hausdorff spectrum
  $\hspec^{\mathcal{S}}(G) = [0,1]$.
\end{Thm}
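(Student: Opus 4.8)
The plan is to adapt the proof of Theorem~\ref{Thm:full-spectrum-Frattini}, the first point being that verbal subgroups are compatible with direct decompositions. Since each term $G_i$ of the iterated verbal filtration $\mathcal{S}$ is verbal in $G_{i-1}$, and values of words in a direct product are computed coordinatewise, $G_i$ splits as $G_i = E_{1,i} \times \cdots \times E_{r,i}$ with $E_{j,i} := G_i \cap E_j$, and $(E_{j,i})_{i \in \N_0}$ is itself an iterated verbal filtration of~$E_j$. Hence the indices are additive, $\log_p \lvert G : G_i \rvert = \sum_{j=1}^r \log_p \lvert E_j : E_{j,i} \rvert$, and for a closed subgroup $H \le_\mathrm{c} G$ the Hausdorff dimension can be analysed layer by layer. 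First I would record this reduction and fix, without loss of generality, the non-soluble factor to be $E_1$, so that $\RG(E_1) > 0$.

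Second, I would show that $\mathcal{S}$ is lacunary, using the positive rank gradient of $E_1$. Each $E_{1,n}$ is a proper verbal, hence fully invariant, subgroup of the free or Demushkin pro-$p$ group $E_{1,n-1}$, so $E_{1,n} \subseteq \Phi(E_{1,n-1})$ and the layer width satisfies $\log_p \lvert E_{1,n-1} : E_{1,n} \rvert \ge \dd(E_{1,n-1}) \ge \RG(E_1)\, \lvert E_1 : E_{1,n-1}\rvert$; as open subgroups of $E_1$ are again free or Demushkin, this bound is legitimate. The right-hand side grows doubly exponentially in the cumulative index, and this growth dominates the contribution of every factor, so writing $d_n = \log_p \lvert G_{n-1} : G_n \rvert$ and $D_n = \sum_{i \le n} d_i$ one obtains $D_{n-1}/D_n \to 0$. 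From this I would deduce that for every $H \le_\mathrm{c} G$,
\[
  \hdim_G^{\mathcal{S}}(H) = \varliminf_{n \to \infty} \frac{\ell_n}{d_n}, \qquad \ell_n := \log_p \bigl\lvert (H \cap G_{n-1})G_n : G_n \bigr\rvert ,
\]
so that it suffices to construct, for each $\alpha \in [0,1]$, a closed subgroup whose layer-occupation ratios $\ell_n/d_n$ have lower limit exactly~$\alpha$.

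Third comes the construction, which I expect to be the main obstacle. The cases $\alpha = 1$ (take $H = G$) and $\alpha = 0$ (take $H = 1$) are immediate, so fix $\alpha \in (0,1)$ and set $k_n = \lfloor \alpha d_n \rfloor$. I would build $H$ as the inverse limit of a compatible tower of subgroups $\overline{H}_n \le G/G_n$, assembled level by level, with $\overline{H}_n$ surjecting onto $\overline{H}_{n-1}$ and meeting the layer $G_{n-1}/G_n$ in an $\F_p$-subspace of dimension $k_n$. The difficulty is that the extension $1 \to G_{n-1}/G_n \to \tilde{H}_{n-1} \to \overline{H}_{n-1} \to 1$, where $\tilde{H}_{n-1}$ denotes the full preimage of $\overline{H}_{n-1}$, need not split, so one cannot a priori prescribe the intersection with the layer. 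The point is that the non-soluble factor contributes a free (respectively Demushkin) portion to each layer: lifting a minimal generating set of $\overline{H}_{n-1}$, of which there are at most $\log_p\lvert \overline{H}_{n-1}\rvert \le D_{n-1} = o(d_n)$ elements, one uses freeness of the relevant part of $E_1$ to choose the lifts generating a subgroup that meets $G_{n-1}/G_n$ only in the unavoidable relations, whose number is $o(d_n)$. Enlarging this supplement by a subspace of the complemented directions of the layer — a reservoir of dimension $d_n - o(d_n)$ — then realises any intermediate value $k_n$ for all large $n$, since $k_n/d_n \to \alpha > 0$ dominates the defect $o(d_n)/d_n \to 0$.

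Finally I would check that the resulting closed subgroup $H$ satisfies $\ell_n = k_n$, whence $\hdim_G^{\mathcal{S}}(H) = \varliminf_n k_n/d_n = \alpha$ by the reduction above, and that the construction is uniform across free and Demushkin factors, the latter handled through the fact that their open subgroups remain Demushkin of positive rank gradient. The crux — and the only place where the free or Demushkin structure is genuinely used, rather than mere positive rank gradient — is the lifting step showing that a suitable supplement meets each verbal layer in negligible dimension; the remaining arguments are bookkeeping inherited from the proof of Theorem~\ref{Thm:full-spectrum-Frattini}.
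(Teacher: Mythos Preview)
Your strategy differs from the paper's.  The paper does not build subgroups layer by layer; instead it extends Propositions~\ref{Prop:fg_subgroups_hdim_0} and~\ref{Prop:normal_subgroups_hdim_1} to iterated verbal filtrations on a single free or Demushkin factor (finitely generated infinite-index subgroups have strong Hausdorff dimension~$0$, infinite normal subgroups have strong Hausdorff dimension~$1$) and then applies Proposition~\ref{prop:KlThZR19}, the direct-product form of the Interval Theorem.  The mechanism for the extensions is concrete and different from anything in your sketch: since each $E_{j,i}$ is itself free or Demushkin, it surjects onto a free pro-$p$ group of rank roughly $\dd(E_{j,i})$ and hence onto $m_i$ copies of $H\cap E_{j,i}$ (respectively $NE_{j,i}$) with $m_i\to\infty$; verbality of $E_{j,i+1}$ in $E_{j,i}$ then forces $\lvert E_{j,i}:E_{j,i+1}\rvert\ge\lvert H\cap E_{j,i}:H\cap E_{j,i+1}\rvert^{m_i}$, which is exactly~\eqref{eqn:suff_cond_fg_sgp_hd_0}.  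Soluble factors are disposed of by the same surjection trick, comparing them against the non-soluble factor~$E_1$.

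Your direct construction is plausible, but the lifting step is a genuine gap as written.  You assert that a supplement built from $m_{n-1}\le D_{n-1}$ lifts meets the layer $G_{n-1}/G_n$ in dimension $o(d_n)$, invoking ``unavoidable relations'' and ``freeness of the relevant part of~$E_1$'', but no bound is given; a priori an $m$-generated subgroup of a finite $p$-group can meet a given elementary abelian normal subgroup in arbitrarily large dimension, regardless of~$m$.  What actually makes this work in a \emph{single} free or Demushkin factor is that the lifts, taken in $G$ itself, generate a free subgroup $H^{(n-1)}$, so Schreier gives $\dd(H^{(n-1)}\cap G_{n-1})\le m_{n-1}\,p^{L_{n-1}}$ with $L_{n-1}=\log_p\lvert\overline H_{n-1}\rvert$, while positive rank gradient gives $d_n\ge c\,p^{D_{n-1}}$; since $L_{n-1}\le\alpha D_{n-1}+O(1)$ and $\alpha<1$, the ratio tends to~$0$.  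This is essentially the extended Proposition~\ref{Prop:fg_subgroups_hdim_0} in disguise.  In a genuine direct product, however, the lifted subgroup is not free, and the Schreier bound is unavailable; you would need to run the construction factor by factor (build $H_j\le E_j$ with layer ratio tending to~$\alpha$ in each non-soluble factor and set $H=\prod_j H_j$), which you do not say.  With these repairs your argument can be completed, but it then amounts to an inlined proof of the Interval Theorem together with the extended Proposition~\ref{Prop:fg_subgroups_hdim_0}, rather than a shortcut around them.
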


In a similar direction, we extend
Theorem~\ref{thm:full-spectrum-Zassenhaus-free} to non-soluble
Demushkin pro-$p$ groups and derive as a common generalisation a
corresponding result for mixed finite direct products of free and
Demushkin pro-$p$ groups.

\begin{Thm}\label{thm:full-spectrum-Zassenhaus-Demushkin}
  Let $D$ be a non-soluble Demushkin pro-$p$ group.  Then $D$ has
  full Hausdorff spectrum $\hspec^{\mathcal{Z}}(D) = [0,1]$ with
  respect to the Zassenhaus series~$\mathcal{Z}$.
\end{Thm}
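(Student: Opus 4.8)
The plan is to reduce everything to the restricted Lie algebra associated to the Zassenhaus series and then to re-run the construction behind Theorem~\ref{thm:full-spectrum-Zassenhaus-free}, feeding in the graded data of $D$ in place of that of a free pro-$p$ group. Write $L(D) = \bigoplus_{i \ge 1} Z_i(D)/Z_{i+1}(D)$ for the graded restricted Lie algebra over $\F_p$ determined by $\mathcal{Z}$, and $L(H)$ for the homogeneous Lie subalgebra attached to a closed subgroup $H$. Since $\lvert Z_i(D) : Z_{i+1}(D)\rvert = p^{\dim_{\F_p} L(D)_i}$, the Hausdorff dimension becomes a problem about partial dimension sums:
\[
\hdim_D^{\mathcal{Z}}(H) = \varliminf_{n\to\infty} \frac{\sum_{i=1}^{n}\dim_{\F_p} L(H)_i}{\sum_{i=1}^{n}\dim_{\F_p} L(D)_i}.
\]
Thus it suffices to prescribe the cumulative dimensions of homogeneous Lie subalgebras of $L(D)$.

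First I would compute the graded dimensions $c_i := \dim_{\F_p} L(D)_i$. A Demushkin group is a one-relator pro-$p$ group whose relator has quadratic initial form in the free restricted Lie algebra (the nondegenerate symplectic form $\sum_j [\bar x_{2j-1}, \bar x_{2j}]$, with the $p$-th power term, where present, contributing in the same or a higher degree); consequently $\mathrm{gr}\,\F_p[[D]] \cong U(L(D))$ is a one-relator quadratic Koszul algebra, Koszul dual to the Poincar\'e-duality cohomology ring $H^*(D,\F_p)$ of Hilbert series $1 + dt + t^2$, where $d = \dd(D)$. Koszulity then gives $\mathrm{Hilb}\,U(L(D)) = (1 - dt + t^2)^{-1}$, so that $c_i = d\,c_{i-1} - c_{i-2}$ grows like $\lambda^i$ with $\lambda = (d + \sqrt{d^2-4})/2 > 1$; here non-solubility forces $d \ge 3$, so the growth is genuinely exponential, mirroring the Witt numbers of the free case. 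In particular $c_i$ is eventually positive, $1/\sum_{j\le i} c_j \to 0$, while $\sum_{j<i}c_j / \sum_{j\le i} c_j$ stays bounded away from~$1$, giving both coarse (whole-degree) and fine (single-generator) control over the partial sums.

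Next I would equip $D$ with a power-commutator factorisation adapted to $\mathcal{Z}$: Hall's commutator collection process supplies, for any finitely generated pro-$p$ group, an ordered family $(u_k)_{k\ge 1}$ of power-commutator generators indexed by a homogeneous $\F_p$-basis of $L(D)$, yielding unique convergent normal forms for the elements of $D$ and of each $Z_n(D)$. This is exactly the bridge used to prove Theorem~\ref{thm:full-spectrum-Zassenhaus-free}, and it lets one pass between sub-families of the $u_k$ and homogeneous Lie subalgebras of $L(D)$. Using it I would select, for each target $\alpha \in [0,1]$, a density-$\alpha$ sub-family of the $u_k$ and form the closed subgroup it generates, tuning the selection degree-by-degree so that $\sum_{i\le n}\dim_{\F_p} L(H)_i$ tracks $\alpha \sum_{i\le n} c_i$ with error $o(\sum_{i\le n} c_i)$; because single generators are negligible while whole degrees are not, the same packing argument as in the free case produces $\varliminf$ equal to $\alpha$. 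The endpoints are immediate: a procyclic subgroup has sub-exponentially growing graded subalgebra and hence Hausdorff dimension $0$, while $D$ itself (or any open subgroup) realises $1$.

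The hard part will be controlling the homogeneous subalgebra actually generated by a chosen sub-family of the $u_k$ in the presence of the Demushkin relation. In the free setting the generated subalgebra is governed purely by the free bracket structure, whereas here the single quadratic relator could a priori force unexpected coincidences in $L(D)$ and inflate some $\dim_{\F_p} L(H)_i$, corrupting the density bookkeeping. The way around this is that the relator is one homogeneous element of fixed low degree, so along the surjection from the free restricted Lie algebra onto $L(D)$ its effect touches only a vanishing proportion of the cumulative dimensions; concretely I would carry out the free-group construction in degrees $\ge N$ and let $N \to \infty$, or equivalently verify that the free-case estimates descend along the quotient map. Checking that this descent preserves the $\varliminf$ \emph{exactly}, rather than merely up to an inequality, is the principal technical obstacle, and it is precisely where the explicit knowledge of $c_i$ from the Koszul Hilbert series is indispensable.
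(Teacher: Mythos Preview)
Your Hilbert--Poincar\'e computation is correct and matches the paper's Lemma~\ref{lem:Demushkin-growth-of-Dn}, but the overall strategy has a genuine gap and differs substantially from the paper's route.

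The paper does \emph{not} attempt a direct degree-by-degree construction of subgroups of every density inside $\mathsf{D}=\mathsf{R}(D)$. Instead it reduces to the free situation by locating a \emph{free} restricted graded Lie subalgebra $\mathsf{E}\le\mathsf{D}$ of density~$1$ (Proposition~\ref{pro:Demushkin-Lie-has-free-density-1-subalgebra}, via Labute's theorem on free complements in one-relator graded Lie algebras), lifts it to a free pro-$p$ subgroup $F\le_\mathrm{c} D$ with $\mathsf{R}_D(F)=\mathsf{E}$ using Proposition~\ref{pro:Andrei-Misha}, and then invokes the Interval Theorem with $H=F$, $\xi=1$, $\eta=0$. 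The point is that for $H\le_\mathrm{c} F$ the Lie subalgebra $\mathsf{R}_D(H)$ sits inside the free $\mathsf{E}$, hence is itself free; Corollary~\ref{cor:fg-H-gives-fg-Lie} then forces $\mathsf{R}_D(H)$ to be finitely generated whenever $H$ is, and Proposition~\ref{pro:fg-subalgs-in-Demushkin-dens-0} shows that finitely generated \emph{free} graded subalgebras of $\mathsf{D}$ have density~$0$ (the proof compares the root $\lambda$ of the subalgebra's characteristic polynomial against $d-\varepsilon$ and uses irreducibility of $t^2-dt+1$ to get $\lambda<d-\varepsilon$ strictly).

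Your approach bypasses the Interval Theorem and the free subalgebra $\mathsf{E}$, and this is where it breaks. The iterative selection argument from Theorem~\ref{thm:all_densities_freelie} works because at each finite stage the subalgebra already built is a finitely generated proper graded subalgebra and therefore has density~$0$ (Proposition~\ref{prop:fg-graded-0}); this gives room to add generators at the next stage. You need the analogous statement inside $\mathsf{D}$, but you have not established it: the paper's density-$0$ result (Proposition~\ref{pro:fg-subalgs-in-Demushkin-dens-0}) is proved only for \emph{free} restricted subalgebras, and subalgebras produced by an arbitrary sub-family of your $u_k$ have no reason to be free. Your proposed remedy---``carry out the free-group construction in degrees $\ge N$ and let $N\to\infty$'', or ``the relator touches only a vanishing proportion of cumulative dimensions''---does not address this: the relator is a single element of degree~$2$, but the \emph{ideal} it generates meets every degree, and whether a given subalgebra of the quotient still has density~$0$ is exactly the question, not something absorbed by an $o(1)$. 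Relatedly, the power-commutator bridge you invoke (``unique convergent normal forms for the elements of~$D$'') is precisely Proposition~\ref{pro:Andrei-Misha}, whose hypothesis is that the images of the chosen elements freely generate a restricted subalgebra; without that freeness you cannot conclude $\mathsf{R}_D(\langle X\rangle)=\langle X\eta\rangle_{\mathrm{res.\,Lie}}$, so you lose the link between your selected sub-family and the Lie subalgebra it actually produces.

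In short, the missing idea is the existence of a density-$1$ free restricted subalgebra of $\mathsf{D}$; once you have it, the Interval Theorem finishes the job immediately and all the difficulties you flag as ``the hard part'' evaporate.
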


\begin{Cor} \label{cor:full-spectrum-Zassenhaus-mixed} Let
  $G = E_1 \times \ldots \times E_r$ be a finite direct product of
  finitely generated pro-$p$ groups $E_j$, each of which is either
  free or Demushkin.  Suppose that at least one factor is non-soluble.
  Then $G$ has full Hausdorff spectrum
  $\hspec^{\mathcal{Z}}(G) = [0,1]$ with respect to the Zassenhaus
  series~$\mathcal{Z}$.
\end{Cor}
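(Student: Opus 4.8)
The plan is to reduce the assertion to the two preceding results, Theorem~\ref{thm:full-spectrum-Zassenhaus-free} and Theorem~\ref{thm:full-spectrum-Zassenhaus-Demushkin}, by exploiting the fact that the Zassenhaus series is compatible with direct products. Since each operation in its recursive definition---forming $p$th powers and commutators---is performed componentwise in $G = E_1 \times \cdots \times E_r$, where $(g_1,\ldots,g_r)^p = (g_1^p,\ldots,g_r^p)$ and $[(g_1,\ldots),(h_1,\ldots)] = ([g_1,h_1],\ldots)$, a straightforward induction gives $Z_i(G) = Z_i(E_1) \times \cdots \times Z_i(E_r)$ for all $i$. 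Writing $b_j(i) = \log_p \lvert E_j : Z_i(E_j)\rvert$, one then has $\log_p\lvert G : Z_i(G)\rvert = \sum_{j=1}^r b_j(i)$, and for any \emph{product} subgroup $H = K_1 \times \cdots \times K_r$ with $K_j \le_\mathrm{c} E_j$ the numerator splits likewise as $\log_p\lvert H : H \cap Z_i(G)\rvert = \sum_{j=1}^r \log_p\lvert K_j : K_j \cap Z_i(E_j)\rvert$. As we only need to \emph{realise} each value in $[0,1]$, it suffices to work with product subgroups, whose Hausdorff dimension in $G$ is thus the lower limit of a ratio of sums of the local quantities.

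After relabelling, suppose $E_1,\ldots,E_m$ are the non-soluble factors and $E_{m+1},\ldots,E_r$ the soluble ones, where $m \ge 1$ by hypothesis. The crucial dichotomy is that $b_j(i)$ grows \emph{exponentially} in $i$ for each non-soluble free or Demushkin factor---this is governed by the Hilbert series of the associated graded restricted enveloping algebra, namely $(1-dt)^{-1}$ in the free case of rank $d \ge 2$ and $(1-dt+t^2)^{-1}$ in the Demushkin case---whereas every soluble free or Demushkin factor is $\Z_p$, respectively (by the classification of Demushkin groups) $p$-adic analytic, and hence has $b_j(i)$ polynomially bounded. In particular $b_j(i) = o\big(\sum_{k \le m} b_k(i)\big)$ for $j > m$, so the soluble factors are asymptotically negligible in both numerator and denominator. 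Now fix $\alpha \in [0,1]$. The constructions underlying Theorems~\ref{thm:full-spectrum-Zassenhaus-free} and~\ref{thm:full-spectrum-Zassenhaus-Demushkin} produce, in each non-soluble factor $E_j$, a closed subgroup $K_j$ whose local density $\log_p\lvert K_j : K_j \cap Z_i(E_j)\rvert / b_j(i)$ \emph{converges} to $\alpha$. Setting $K_j = 1$ for $j>m$ and $H = K_1 \times \cdots \times K_r$, the quotient $\sum_j \log_p\lvert K_j : K_j \cap Z_i(E_j)\rvert \big/ \sum_j b_j(i)$ is, up to the negligible soluble terms, a convex combination of the converging local densities for $j \le m$, with weights $b_j(i)/\sum_{k \le m} b_k(i)$; a convex combination of quantities all tending to $\alpha$ again tends to $\alpha$. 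Hence $\hdim_G^{\mathcal{Z}}(H) = \alpha$, and since $\alpha$ was arbitrary (the endpoints $0$ and $1$ arise from $H=1$ and $H = E_1 \times \cdots \times E_m \times 1 \times \cdots \times 1$), we obtain $\hspec^{\mathcal{Z}}(G) = [0,1]$.

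The main obstacle is the passage from \emph{full spectrum} to \emph{convergent local density}. The bare statements of Theorems~\ref{thm:full-spectrum-Zassenhaus-free} and~\ref{thm:full-spectrum-Zassenhaus-Demushkin} only assert that each value in $[0,1]$ is attained as a lower limit, and a lower limit of a ratio of sums is not controlled by the lower limits of the summands: if several non-soluble factors of comparable growth contributed only via lower limits, the indices $i$ achieving their extrema could fail to align, and the combined dimension could then exceed~$\alpha$. I would therefore revisit the Lie-theoretic constructions in those two proofs and verify that the subgroups realising $\alpha$ may be chosen with a genuine limit; this should be automatic when the graded pieces are filled in a fixed asymptotic proportion~$\alpha$, since the partial sums of the graded dimensions then have ratio tending to~$\alpha$. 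A secondary point meriting care is the growth dichotomy itself, i.e.\ confirming that every soluble free or Demushkin factor is $p$-adic analytic (the free case giving $\Z_p$, and the soluble Demushkin groups being analytic by their classification), and hence of polynomial Zassenhaus growth, so that such factors are genuinely negligible against any non-soluble one. I note finally that when there is a unique non-soluble factor, or one whose growth strictly dominates all others, the single-factor embedding $H = K_1 \times 1 \times \cdots \times 1$ already yields the result directly from the bare statements, as then $\sum_j b_j(i) = b_1(i)\,(1 + o(1))$.
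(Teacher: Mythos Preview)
Your overall plan---decompose the Zassenhaus series across the direct factors, discard the soluble factors as asymptotically negligible, and assemble a subgroup of prescribed dimension---matches the paper's in outline. But the paper does \emph{not} proceed by constructing, in each non-soluble factor $E_j$, a subgroup $K_j$ of strong Hausdorff dimension~$\alpha$ and then taking the product. Instead it invokes Proposition~\ref{prop:KlThZR19}, the direct-product variant of the Interval Theorem already used for Theorem~\ref{Thm:full-spectrum-Frattini}: one chooses in each non-soluble $E_j$ a subgroup $K_j$ of strong Hausdorff dimension~$1$ (the ideal in the free case, the free subgroup of Proposition~\ref{pro:Demushkin-Lie-has-free-density-1-subalgebra} in the Demushkin case) in which every finitely generated subgroup has dimension~$0$, and then Proposition~\ref{prop:KlThZR19} yields $(0,1] \subseteq \hspec^{\mathcal{Z}}(G)$ in one stroke. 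The point is that strong dimension~$1$ is automatic (a lower limit equal to the maximum is a limit), so no coordination of limits across factors is ever needed.

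The obstacle you identify is genuine and not as easily repaired as you suggest. The constructions underlying Theorems~\ref{thm:full-spectrum-Zassenhaus-free} and~\ref{thm:full-spectrum-Zassenhaus-Demushkin} go through the Interval Theorem (or its Lie analogue, Theorem~\ref{thm:all_densities_freelie}), and the ascending-chain construction there does \emph{not} fill graded pieces in a fixed proportion~$\alpha$: it alternates between adding generators and waiting for the density of a fixed finitely generated subalgebra to decay back below~$\alpha$. During the waiting phases the running density overshoots~$\alpha$, so the construction as written only guarantees $\varliminf = \alpha$, not a proper limit. One could perhaps refine the construction to avoid overshooting, but this is additional work not carried out in the paper, and your factor-by-factor approach stands or falls on it. The paper's route via Proposition~\ref{prop:KlThZR19} sidesteps the issue entirely by building the subgroup of~$G$ directly from an ascending chain inside $K_1 \times \cdots \times K_r$, controlling the global density rather than the local ones.
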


The study of Hausdorff spectra of pro-$p$ right-angled
Artin groups -- defined via presentations analogously to the
discrete case -- provides a natural avenue for future research.
Above we already formulated our results to cover the basic case of
direct products of free pro-$p$ groups and, indeed, more general
direct products.  As a supplement, we can also describe the Hausdorff
dimensions of normal subgroups in such direct products.

\smallskip

The \emph{normal Hausdorff spectrum} of a finitely generated pro-$p$
group~$G$, with respect to a filtration~$\mathcal{S}$, is
\[
\hspec_\trianglelefteq^\mathcal{S}(G) = \{\hdim_G^\mathcal{S}(H) \mid
H \trianglelefteq_\mathrm{c} G\};
\]
compare~\cite[\S 4.7]{Sh00} and \cite{KlThxx}.

\begin{Thm}\label{thm:direct_prods}
  Let $G = H_1 \times \ldots \times H_r$ be a non-trivial finite
  direct product of finitely generated pro-$p$ groups $H_j$ of
  positive rank gradient.  For $1 \le j \le r$, set
  $\alpha_j = \hdim_G^\mathcal{F}(H_j)$, with respect to the Frattini
  series~$\mathcal{F}$.  Then $G$ has normal Hausdorff spectrum
  $\hspec_\trianglelefteq^{\mathcal{F}}(G) = \{ \sum_{j \in J}
  \alpha_j \mid J \subseteq \{1,\ldots,r\} \}$
  with respect to~$\mathcal{F}$.
\end{Thm}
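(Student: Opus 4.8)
The plan is to exploit that the Frattini series is verbal, so it is compatible with direct products: one checks $\Phi_i(G)=\Phi_i(H_1)\times\cdots\times\Phi_i(H_r)$ for all $i$, whence $\log_p\lvert G:\Phi_i(G)\rvert=\sum_{j=1}^r h_j(i)$ with $h_j(i)=\log_p\lvert H_j:\Phi_i(H_j)\rvert$. Writing $g(i)=\sum_j h_j(i)$ and $\beta_j(i)=h_j(i)/g(i)$, we have $\sum_j\beta_j(i)=1$ and $\alpha_j=\hdim_G^{\mathcal F}(H_j)=\varliminf_{i}\beta_j(i)$. First I would show that each $\beta_j(i)$ in fact converges, so that $\sum_j\alpha_j=1$ and the $\varliminf$ is additive over the factors. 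The point is that, along the Frattini filtration of a single factor, $\log_p\lvert\Phi_{i-1}(H_j):\Phi_i(H_j)\rvert=\dd(\Phi_{i-1}(H_j))$, and the quantity $(\dd(\Phi_{i-1}(H_j))-1)/p^{h_j(i-1)}$ decreases monotonically to $\RG(H_j)>0$ (as recalled in the introduction). Hence $h_j(i)$ grows like a tower with base governed by $\RG(H_j)$, and for any two factors the difference $h_j(i)-h_k(i)$ either stays bounded (and then converges) or tends to $\pm\infty$; in either case $h_j(i)/h_k(i)$ has a limit in $\{0,1,\infty\}$. Consequently the indices split into a dominant top cluster, the $\beta_j(i)$ converge, $\sum_j\alpha_j=1$, and $\alpha_j>0$ exactly for $j$ in the top cluster.

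Granting convergence, the lower bound is immediate: for $J\subseteq\{1,\dots,r\}$ the subgroup $N_J=\prod_{j\in J}H_j$ is normal in $G$, and $\hdim_G^{\mathcal F}(N_J)=\lim_i\sum_{j\in J}\beta_j(i)=\sum_{j\in J}\alpha_j$. This realises every subset sum, giving the inclusion $\supseteq$.

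The heart of the upper bound is the following Key Lemma: if $H$ is finitely generated of positive rank gradient and $M\trianglelefteq_{\mathrm c}H$ is infinite, then $\hdim_H^{\mathcal F}(M)=1$, while finite $M$ give dimension $0$. To prove it, set $m(i)=\log_p\lvert M:M\cap\Phi_i(H)\rvert$ and $\bar h(i)=\log_p\lvert H/M:\Phi_i(H/M)\rvert$, so that $h(i)=\bar h(i)+m(i)$ and $m(i)\to\infty$ precisely when $M$ is infinite. The Schreier inequality applied in the quotient gives $\dd(\Phi_i(H/M))-1\le(\dd(H/M)-1)\,p^{\bar h(i)}$, while positive rank gradient gives $\dd(\Phi_i(H))-1\ge\RG(H)\,p^{h(i)}$; feeding these into $m(i+1)=m(i)+\dd(\Phi_i(H))-\dd(\Phi_i(H/M))$ and using $p^{\bar h(i)}=p^{h(i)}p^{-m(i)}$ forces $\bar h(i)/h(i)\to0$, i.e. $\hdim_H^{\mathcal F}(M)=1$. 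I would then sandwich an arbitrary $N\trianglelefteq_{\mathrm c}G$ by $\prod_j (N\cap H_j)\le N\le\prod_j\pi_j(N)$, where $\pi_j$ is the projection onto $H_j$ and both $N\cap H_j$ and $\pi_j(N)$ are normal in $H_j$. Applying the Key Lemma factorwise, the two sandwiching subgroups have Hausdorff dimensions $\sum_{j\in J^-}\alpha_j$ and $\sum_{j\in J^+}\alpha_j$, where $J^-=\{j:N\cap H_j\text{ infinite}\}$ and $J^+=\{j:\pi_j(N)\text{ infinite}\}$.

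The hard part will be to close the gap between these two sandwiches, i.e. to show that $N\cap H_j$ is infinite whenever $\pi_j(N)$ is (the reverse implication being trivial), at least for $j$ in the top cluster. Here I would use that, since $N\trianglelefteq G$ and the factors commute, $[H_j,\pi_j(N)]$ lies in $N\cap H_j$; it therefore suffices to show that an infinite normal subgroup $M=\pi_j(N)$ of a positive rank gradient group $H_j$ cannot have $[H_j,M]$ finite. If it did, then passing to $\bar H_j=H_j/[H_j,M]$ — still of positive rank gradient, as $[H_j,M]$ is finite — the image of $M$ would be an infinite central subgroup, so $\bar H_j$ would have infinite centre, contradicting the incompatibility of positive rank gradient with infinite abelian (amenable) normal subgroups. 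This is the pro-$p$ manifestation of the principle behind Theorem~\ref{Thm:posRG_normal_sgps_inf_gen} and the Ab\'ert--Nikolov argument, and I expect this step, rather than the dimension bookkeeping, to be the main obstacle. Once it is in place we get $J^+=J^-=:J$ on the top cluster, hence $\hdim_G^{\mathcal F}(N)=\sum_{j\in J}\alpha_j$, which yields the inclusion $\subseteq$ and completes the determination of $\hspec_{\trianglelefteq}^{\mathcal F}(G)$.
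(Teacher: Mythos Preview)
Your proposal is correct and follows essentially the same route as the paper: the sandwich $\prod_j [N,H_j]\le N\le\prod_j\pi_j(N)$ (the paper uses $K_j=[N,H_j]$ rather than $N\cap H_j$, but since $[N,H_j]\subseteq N\cap H_j$ this is equivalent), the fact that an infinite normal subgroup of a positive-rank-gradient pro-$p$ group has strong Hausdorff dimension~$1$ (this is Proposition~\ref{Prop:normal_subgroups_hdim_1}), and the lemma that $[H_j,\pi_j(N)]$ is infinite whenever $\pi_j(N)$ is (this is Lemma~\ref{lem:[N,H]}, proved exactly as you outline via the infinite-centre contradiction). You have correctly anticipated that this last step is the crux.

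One point deserves comment. You devote effort to showing that each $\beta_j(i)=h_j(i)/g(i)$ actually converges, so that $\sum_j\alpha_j=1$ and the lower limit is additive over factors. The paper does not verify this explicitly; it asserts that for decomposable $N=N_1\times\cdots\times N_r$ a ``straightforward calculation'' gives $\hdim_G^{\mathcal F}(N)=\sum_{j\in J}\alpha_j$. Your tower-growth observation (that $h_j(i+1)\sim\RG(H_j)\,p^{h_j(i)}$, forcing each pairwise ratio $h_j(i)/h_k(i)$ to a limit in $\{0,1,\infty\}$) is one clean way to justify this; it would benefit from the remark that if $h_j(i)-h_k(i)$ stays bounded then, being an integer sequence whose successive increments are governed by $p^{h_j(i)-h_k(i)}$ versus a fixed ratio of rank gradients, it is in fact eventually constant. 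Once convergence is in hand your hedge ``at least for $j$ in the top cluster'' is unnecessary: the infinite-commutator lemma applies to every factor, so $J^+=J^-$ holds globally.
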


\begin{Thm}\label{thm:direct_prods_free_Dem}
  Let $G = E_1 \times \ldots \times E_r$ be a finite direct product of
  finitely generated pro-$p$ groups $E_j$, each of which is either
  free or Demushkin.  Suppose that at least one factor is non-soluble.
  For $1 \le j \le r$, set $\alpha_j = \hdim_G^\mathcal{S}(E_j)$, with
  respect to an iterated verbal filtration~$\mathcal{S}$ of~$G$.  Then
  $G$ has normal Hausdorff spectrum
  $\hspec_\trianglelefteq^{\mathcal{S}}(G) = \{ \sum_{j \in J}
  \alpha_j \mid J \subseteq \{1,\ldots,r\} \}$
  with respect to~$\mathcal{S}$.
\end{Thm}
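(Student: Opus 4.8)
The plan is to follow the same strategy as for Theorem~\ref{thm:direct_prods}, exploiting the fact that an iterated verbal filtration of a direct product is itself a product of iterated verbal filtrations of the factors. Since verbal subgroups respect direct decompositions, for an iterated verbal filtration $\mathcal{S}\colon G_i$ one has
\[
G_i = (E_1)_i \times \cdots \times (E_r)_i \quad\text{and}\quad \log_p\lvert G:G_i\rvert = \sum_{j=1}^r \log_p\lvert E_j:(E_j)_i\rvert,
\]
where $(E_j)_i$ denotes the induced iterated verbal filtration of~$E_j$. Writing $d_j^{(i)} = \log_p\lvert E_j:(E_j)_i\rvert$ and $D^{(i)} = \log_p\lvert G:G_i\rvert$, this reduces the whole problem to controlling the $r$ sequences $d_j^{(i)}/D^{(i)}$ together with the behaviour of normal subgroups in each coordinate.

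For the inclusion $\hspec_\trianglelefteq^{\mathcal{S}}(G) \supseteq \{\sum_{j\in J}\alpha_j \mid J\subseteq\{1,\ldots,r\}\}$, I would realise each subset sum by the normal subgroup $N_J = \prod_{j\in J}E_j$, for which $\hdim_G^\mathcal{S}(N_J) = \varliminf_i \big(\sum_{j\in J} d_j^{(i)}\big)/D^{(i)}$. For the reverse inclusion, given $N\trianglelefteq_\mathrm{c} G$ I set $J = \{j : \pi_j(N)\neq 1\}$, where $\pi_j\colon G\to E_j$ is the projection. The standard commutator trick for direct products, $[(1,\ldots,g,\ldots,1), x] = (1,\ldots,[g,\pi_j(x)],\ldots,1)\in N$ for $g$ in the $j$-th slot and $x\in N$, yields the sandwich
\[
\prod_{j\in J}[E_j,\pi_j(N)] \;\le_\mathrm{c}\; N \;\le_\mathrm{c}\; \prod_{j\in J}E_j = N_J,
\]
so it suffices to show that both the lower and the upper term have Hausdorff dimension exactly $\sum_{j\in J}\alpha_j$.

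Two ingredients drive this. First, a \emph{fullness lemma}: in a non-abelian free or non-soluble Demushkin pro-$p$ group $E$, equipped with an induced iterated verbal filtration, every non-trivial closed normal subgroup $M$ satisfies $\lim_i \log_p\lvert M:M\cap E_i\rvert/\log_p\lvert E:E_i\rvert = 1$, whereas a soluble factor contributes $\alpha_j = 0$. Since the non-soluble factors have trivial centre and $\pi_j(N)\trianglelefteq_\mathrm{c} E_j$ is non-trivial for $j\in J$, each $[E_j,\pi_j(N)]$ is a non-trivial closed normal subgroup of~$E_j$, so the fullness lemma applies coordinatewise. Second, a \emph{proportionality lemma}: for each $j$ the ratio $d_j^{(i)}/D^{(i)}$ converges to $\alpha_j$, whence $\sum_{j=1}^r\alpha_j = 1$ and the $\varliminf$ defining the Hausdorff dimension of a coordinatewise product becomes a genuine limit that adds across coordinates. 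Combining the two lemmas, both ends of the sandwich have dimension $\sum_{j\in J}\alpha_j$, and monotonicity of $\hdim_G^\mathcal{S}$ forces $\hdim_G^\mathcal{S}(N) = \sum_{j\in J}\alpha_j$; this closes the reverse inclusion and, together with the realisation above, gives equality.

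The main obstacle is the fullness lemma for an \emph{arbitrary} iterated verbal filtration rather than the Frattini or Zassenhaus series, where the comparison of growth rates is classical. In general I would argue that a non-trivial normal subgroup $M$ forces the quotient $E/M$ to have strictly smaller verbal-filtration growth than $E$ itself, so that the codimension ratio $\log_p\lvert E:M E_i\rvert/\log_p\lvert E:E_i\rvert$ tends to~$0$; this is delicate precisely because $M$ may be infinitely generated and the indices $\lvert E:E_i\rvert$ can grow very irregularly. The subsidiary proportionality point is comparatively routine once the growth estimates underlying Theorem~\ref{thm:full_spec_iterated_verbal} are available: equal factors match by symmetry, and for unequal growth rates one shows that the faster-growing coordinates dominate, which is exactly what makes each ratio $d_j^{(i)}/D^{(i)}$ converge.
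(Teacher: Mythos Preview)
Your approach matches the paper's: the commutator sandwich is precisely Proposition~\ref{prop:all-normals-direct-products}, and your ``fullness lemma'' is exactly the extension of Proposition~\ref{Prop:normal_subgroups_hdim_1} to iterated verbal filtrations that was already established inside the proof of Theorem~\ref{thm:full_spec_iterated_verbal} (via Lemma~\ref{Lem:ratio_gens_decreases} together with the fact that the open terms $G_i$, being themselves free or Demushkin, surject onto arbitrarily large direct powers of $NG_i$), so it is not an outstanding obstacle but a result already in hand. The paper's proof is accordingly one sentence long, simply citing these two ingredients and the template of Theorem~\ref{thm:direct_prods}.
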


In addition to these general theorems we have the
following concrete results for products of free pro-$p$ groups.

\begin{Thm}\label{thm:direct_prods_free}
  Let $G = F_1 \times \ldots \times F_r$ be a non-trivial finite
  direct product of finitely generated free pro-$p$ groups~$F_j$.  Put
  \[
  d = \max \{ d(F_j) \mid 1 \le j \le r \} \qquad \text{and} \qquad
  t = \lvert \{ j \mid 1 \le j \le r, \, d(F_j) = d \}
  \rvert.
  \]

  Let $\mathcal{S}$ be an iterated verbal filtration of~$G$ and let
  $\mathcal{Z}$ be the Zassenhaus series of~$G$.  Then $G$ has the normal
  Hausdorff spectra
  \[
  \hspec_\trianglelefteq^{\mathcal{S}}(G) =
  \hspec_\trianglelefteq^{\mathcal{Z}}(G) = \{0, \nicefrac{1}{t},
  \ldots, \nicefrac{(t-1)}{t}, 1 \}.
  \]
\end{Thm}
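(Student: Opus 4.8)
The plan is to deduce the statement from the direct-product description of the normal Hausdorff spectrum, combined with an explicit evaluation of the Hausdorff dimensions of the individual free factors. The key observation is that both an iterated verbal filtration $\mathcal{S}$ and the Zassenhaus series $\mathcal{Z}$ are compatible with the decomposition $G = F_1 \times \ldots \times F_r$, in the sense that $G_n = (F_1)_n \times \ldots \times (F_r)_n$: for $\mathcal{S}$ this holds because a verbal subgroup of a direct product is the direct product of the verbal subgroups of the factors, and for $\mathcal{Z}$ because its terms are generated by $p$th powers and commutators, which are formed componentwise. Consequently Theorem~\ref{thm:direct_prods_free_Dem} for $\mathcal{S}$, and its verbatim analogue for $\mathcal{Z}$ (whose proof uses only this compatibility, since $\mathcal{Z}$ respects direct products even though it is not itself verbal), reduce the problem to the subset-sum set $\{\sum_{j \in J}\alpha_j \mid J \subseteq \{1,\ldots,r\}\}$, where $\alpha_j = \hdim_G^{\mathcal{S}}(F_j)$, respectively $\hdim_G^{\mathcal{Z}}(F_j)$. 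Note that when $d \ge 2$ the factor of maximal rank is non-abelian free, hence non-soluble, so the hypotheses of Theorem~\ref{thm:direct_prods_free_Dem} are met even if some $F_j$ are copies of $\mathbb{Z}_p$.

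First I would compute each $\alpha_j$. Writing $N_n^{(e)} = \log_p \lvert F_e : (F_e)_n \rvert$ for the free pro-$p$ group of rank $e$, one has $\alpha_j = \varliminf_n N_n^{(d(F_j))} / \sum_k N_n^{(d(F_k))}$, and $N_n^{(e)}$ depends only on $e$ because free pro-$p$ groups of equal rank are isomorphic and carry the same filtration. The heart of the proof is the growth comparison: for $e < d$ we have $N_n^{(e)}/N_n^{(d)} \to 0$. For $\mathcal{Z}$ this is Lie-theoretic: the associated graded of $F_e$ is the free restricted Lie algebra of rank $e$, whose restricted enveloping algebra is the free associative $\mathbb{F}_p$-algebra on $e$ generators, with Hilbert series $(1-et)^{-1}$; hence $N_n^{(e)}$ has exponential growth rate $e$, strictly increasing in $e$, and the ratio tends to $0$. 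For $\mathcal{S}$ the estimate is combinatorial: by the Schreier formula $d((F_e)_n) = 1 + p^{N_n^{(e)}}(e-1)$, and since every proper verbal subgroup of a free pro-$p$ group lies in the Frattini subgroup (all exponent sums of the defining words are divisible by $p$, as otherwise the verbal subgroup would be improper), the per-step increment $g_w(m) = \log_p \lvert F_m : w(F_m)\rvert$ satisfies $g_w(m) \ge m$ and is superadditive, because $F_{a+b}/w(F_{a+b})$ surjects onto $(F_a/w(F_a)) \times (F_b/w(F_b))$.

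These two properties force $N_n^{(d)}$ to grow doubly exponentially and, comparing the recursions for ranks $d$ and $e$, yield a multiplicative gap $a_n \ge p^{D_{n-1}} b_n$ between the $n$th increments $a_n, b_n$, where $D_{n-1} = N_{n-1}^{(d)} - N_{n-1}^{(e)} \to \infty$; summing the increments from a fixed cut-off $K$ then gives $\limsup_n N_n^{(e)}/N_n^{(d)} \le p^{-D_K}$, which tends to $0$ as $K \to \infty$. In both settings it follows that $\alpha_j = \nicefrac{1}{t}$ when $d(F_j) = d$ and $\alpha_j = 0$ otherwise, since the denominator satisfies $\sum_k N_n^{(d(F_k))} = t\,N_n^{(d)} + o(N_n^{(d)})$. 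As exactly $t$ of the values $\alpha_j$ equal $\nicefrac{1}{t}$ and the rest vanish, the subset-sum set is $\{0, \nicefrac{1}{t}, \ldots, \nicefrac{(t-1)}{t}, 1\}$, and it is the same for $\mathcal{S}$ and $\mathcal{Z}$, giving the asserted equality of the two normal spectra.

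Finally I would dispose of the degenerate case $d = 1$, in which $G = \mathbb{Z}_p^{\,r}$ is abelian and $p$-adic analytic, falling outside Theorem~\ref{thm:direct_prods_free_Dem}; here every closed subgroup is normal, and a direct computation of the Hausdorff dimensions of the closed subgroups of $\mathbb{Z}_p^{\,r}$ produces the spectrum $\{0, \nicefrac{1}{r}, \ldots, 1\}$, in agreement with $t = r$. I expect the main obstacle to be the growth-comparison lemma in the verbal case: a priori the increment function $g_w$ is uncontrolled from above, so the mere doubly-exponential growth of $N_n^{(d)}$ does not obviously render lower-rank factors negligible. The resolution is precisely the combination of $g_w(m) \ge m$ with superadditivity, which converts the additive rank gap into the multiplicative gap $p^{D_{n-1}}$ and thereby forces the ratio to vanish.
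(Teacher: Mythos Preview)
Your proposal is correct and lands on the same overall architecture as the paper: reduce to the subset-sum description $\{\sum_{j\in J}\alpha_j\}$ via Proposition~\ref{prop:all-normals-direct-products} and the fact that infinite normal subgroups of the factors have full Hausdorff dimension, then show $\alpha_j=\nicefrac{1}{t}$ or $0$ according as $d(F_j)=d$ or not; the abelian case $d=1$ is handled separately. For the Zassenhaus series your Lie-theoretic growth argument is precisely what the paper does, namely Lemma~\ref{lem:densities_coincide_restricted_lie} together with Proposition~\ref{prop:subideals_density_1}. One small point: for your ``verbatim analogue'' of Theorem~\ref{thm:direct_prods_free_Dem} with respect to~$\mathcal{Z}$ you need not only the direct-product compatibility of the filtration but also that infinite normal subgroups of the free factors have Hausdorff dimension~$1$ with respect to~$\mathcal{Z}$; this is supplied by Proposition~\ref{prop:subideals_density_1}, and you should cite it explicitly.

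The genuine divergence is in how you establish $N_n^{(e)}/N_n^{(d)}\to 0$ for the iterated verbal filtration~$\mathcal{S}$. You argue from first principles: Schreier's formula for the rank of $(F_e)_n$, the bound $g_w(m)\ge m$ since proper verbal subgroups lie in the Frattini subgroup, and superadditivity of $m\mapsto g_w(m)$, combining these into a recursive multiplicative gap $a_n\ge p^{D_{n-1}}b_n$. The paper instead exploits a structural shortcut: regard $F_k$ (of rank $\tilde d<d$) as a free factor, hence a retract, of~$F_1$, so that $F_k\cap\mathcal{W}_i(\cdots\mathcal{W}_1(F_1)\cdots)=\mathcal{W}_i(\cdots\mathcal{W}_1(F_k)\cdots)$ for every~$i$; then $N_n^{(e)}/N_n^{(d)}$ is literally the density ratio of the finitely generated infinite-index subgroup $F_k\le F_1$ along its own induced filtration, and the extension of Proposition~\ref{Prop:fg_subgroups_hdim_0} established in the proof of Theorem~\ref{thm:full_spec_iterated_verbal} gives the limit~$0$ immediately. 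Your route is self-contained and makes the growth mechanism explicit; the paper's route is shorter, reuses earlier machinery, and avoids tracking the recursion altogether.
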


We conclude the introduction with a brief outline of the general
strategy and the organisation of the paper, following a suggestion of
the referee.

\medskip

\noindent \textit{General Strategy and Organisation.}
Theorem~\ref{Thm:posRG_normal_sgps_inf_gen}, which is proved in
Section~\ref{sec:frattini}, arises from the study of Hausdorff
dimensions of subgroups of pro-$p$ groups $G$ of positive rank
gradient.  Its proof is based on the observation that finitely
generated, infinite-index subgroups $H \le_\mathrm{c} G$ have
Hausdorff dimension $0$, whereas infinite subgroups
$K \le_\mathrm{c} G$ that are subnormal via finitely generated
successive quotients have Hausdorff dimension~$1$.

The main idea underlying the proofs of
Theorems~\ref{Thm:full-spectrum-Frattini},
\ref{thm:full-spectrum-Zassenhaus-free}
and~\ref{thm:full_spec_iterated_verbal} is to exploit suitable
variations of the following result.

\begin{Thm-no-nr}[{\cite[Thm.~5.4]{KlThZR19}}]
  Let $G$ be a countably based pro-$p$ group with a filtration series
  $\mathcal{S}$.  Let $H \le_{\mathrm{c}} G$ be a closed subgroup such
  that $\xi =\hdim_G^\mathcal{S}(H) > 0$ is given by a proper limit.
  Suppose further that $\eta\in[0,\xi) $ is such that every finitely
  generated closed subgroup $K \le_{\mathrm{c}} H$ satisfies
  $\hdim_G^\mathcal{S}(K)\le \eta$. Then
  $(\eta,\xi]\subseteq \hspec^{\mathcal{S}}(G)$.
\end{Thm-no-nr}

The theorem can be applied directly to pro-$p$ groups $G$ of positive
rank gradient, equipped with the Frattini series~$\mathcal{F}$.  For
this, one takes $H \trianglelefteq_\mathrm{c} G$ such that
$\lvert H \rvert$ and $\lvert G : H \rvert$ are both infinite; then
$\xi=1$ and $\eta=0$ yield $\hspec^\mathcal{F}(G) = [0,1]$.
Proposition~\ref{prop:KlThZR19} provides a technical modification of
the theorem, tailor-made for dealing with direct products; this
approach leads to a proof of Theorem~\ref{Thm:full-spectrum-Frattini}
in Section~\ref{sec:frattini}.
 
For non-abelian free pro-$p$ groups and for non-soluble Demushkin
pro-$p$ groups, we generalise in Section~\ref{sec:verbal} the relevant
arguments.  This allows us to deal in a similar way with Hausdorff
dimension with respect to arbitrary iterated verbal filtrations, and
we obtain Theorem~\ref{thm:full_spec_iterated_verbal}.

In Section~\ref{sec:Zassenhaus}, we focus on non-abelian free pro-$p$
groups~$F$, equipped with the Zassenhaus filtration~$\mathcal{Z}$.
A standard construction yields a (graded) free restricted
$\F_p$-Lie algebra~$\mathsf{R} = \mathsf{R}(F)$, associated to $F$
via~$\mathcal{Z}$.  Moreover, the Hausdorff dimension function with
respect to~$\mathcal{Z}$ has a natural Lie-theoretic counterpart, the
so-called density function. \emph{A priori } it is not
 clear that relevant density results for restricted Lie subalgebras
 of~$\mathsf{R}$ can be `pulled back' to Hausdorff dimension results
 for subgroups of~$F$, but, since $F$ is free, the fact that finitely
 generated subalgebras of infinite codimension in~$\mathsf{R}$ have
 density~$0$ (see
 Proposition~\ref{prop:finitely_gen_subalg_density_0}) implies that
 finitely generated infinite-index subgroups of $F$ have Hausdorff
  dimension~$0$ with respect to~$\mathcal{Z}$ %
  (see Corollary~\ref{cor:fg-H-gives-fg-Lie}).
    Using the Interval Theorem stated above, we obtain
  Theorem~\ref{thm:full-spectrum-Zassenhaus-free}.  In addition we
  derive Theorem~\ref{thm:all_densities_freelie} and
  Corollary~\ref{cor:all_densities_restricted} which show, in
  particular, that free $\F_p$-Lie algebras and free restricted
  $\F_p$-Lie algebras have full density spectra.

Much less seems to be known about the restricted Lie algebras
associated to pro-$p$ groups that resemble free pro-$p$ groups.  For
instance, Lie-theoretic analogues of rank gradient seem to have
received little attention. Even extending
  Theorem~\ref{thm:full-spectrum-Zassenhaus-free} to Demushkin pro-$p$
  groups turns out to be somewhat subtle.  By analysing relevant
  Hilbert--Poincar{\'e} series, we obtain
  Theorem~\ref{thm:full-spectrum-Zassenhaus-Demushkin} in
  Section~\ref{sec:Demushkin}.  With some extra care we subsequently
  derive Corollary~\ref{cor:full-spectrum-Zassenhaus-mixed}.

Theorems~\ref{thm:direct_prods}, \ref{thm:direct_prods_free_Dem}
and~\ref {thm:direct_prods_free} which deal with Hausdorff dimensions
of normal subgroups are proved in Section~\ref{sec:normal-spectra}.

\medskip

\noindent \textit{Acknowledgement.} We thank Andrei
  Jaikin-Zapirain for general comments and particularly for alerting
  us to results in \cite{ErJZ13} that led to a simplification of our
  proof of Theorem~\ref{thm:full-spectrum-Zassenhaus-free}.
  Furthermore, his comments encouraged us to extend our result to
  Demushkin groups in
  Theorem~\ref{thm:full-spectrum-Zassenhaus-Demushkin}.
  The third author thanks Ilir Snopche for several discussions about
  Demushkin groups.


\section{Groups with positive rank gradient and the Frattini
  series}\label{sec:frattini}

In this section we establish
Theorems~\ref{Thm:posRG_normal_sgps_inf_gen}
and~\ref{Thm:full-spectrum-Frattini}, working with Hausdorff
dimensions with respect to the Frattini series.  We also present an
alternative and more direct proof of
Theorem~\ref{Thm:posRG_normal_sgps_inf_gen}, which was later suggested
to us by Mark Shusterman.  It is based on an idea that is used in the
proofs of~\cite[Prop.~13]{AbNi12} and~\cite[Thm.~2.7]{Sh17}.
	
Since the Frattini filtration encodes subgroup generation, it is
naturally linked to rank gradient.  Other filtrations encode different
properties of subgroups, linked to as yet unexplored group invariants,
analogous to rank gradient.  Our proof of
Theorem~\ref{Thm:posRG_normal_sgps_inf_gen} in the language of
Hausdorff dimension allows for generalisations in this direction.

  
\subsection{Finitely generated subgroups}
We say that a closed subgroup $H$ of a finitely generated pro-$p$
group $G$ has \emph{strong Hausdorff dimension} with respect to a
filtration $\mathcal{S} \colon G_i$, $i \in \N_0$, if
\[
\hdim_G^{\mathcal{S}}(H) = \lim _{i \to \infty} \frac{\log_p \lvert
  HG_i : G_i \rvert}{\log_p \lvert G : G_i \rvert}
\]
is given by a proper limit.  We begin with an elementary, but central
lemma.
	
\begin{Lem}\label{Lem:ratio_gens_decreases}
  Let $G$ be a finitely generated pro-$p$ group of positive rank
  gradient, with a filtration $\mathcal{S} \colon G_i$, $i \in \N_0$.
  \begin{enumerate}[\rm (1)]
  \item If $H \le_\mathrm{c} G$ is finitely generated and
    $\lvert G : H \rvert = \infty$, then $\lim\limits_{i \to \infty}
    \frac{\dd(H\cap G_i)}{\dd(G_i)} =  0$.
  \item If $H \le_\mathrm{c} G$ is infinite, then
    $\lim\limits_{i \to \infty} \frac{\dd(HG_i)}{\dd(G_i)} = 0$.
  \end{enumerate}
\end{Lem}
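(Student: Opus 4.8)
The plan is to play a lower bound on $\dd(G_i)$, furnished by the positive rank gradient, against an upper bound on the number of generators of the subgroup in question, furnished by the Schreier index formula for pro-$p$ groups. Write $n_i = \lvert G : G_i \rvert$, so that $n_i \to \infty$. I record two standard inputs. First, from the definition of $\RG(G)$ as an infimum over open subgroups, every $U \le_{\mathrm{o}} G$ satisfies $\dd(U) \ge 1 + \RG(G)\,\lvert G : U\rvert$; taking $U = G_i$ gives $\dd(G_i) \ge 1 + \RG(G)\, n_i$, which tends to infinity because $\RG(G) > 0$. Second, the Schreier index formula yields $\dd(U) - 1 \le \lvert V : U\rvert\,(\dd(V)-1)$ whenever $U \le_{\mathrm{o}} V$ with $V$ finitely generated. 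Since the $G_i$ are open and normal, $HG_i$ is an open subgroup of $G$ with $HG_i/G_i \cong H/(H \cap G_i)$, and I abbreviate $k_i = \lvert HG_i : G_i \rvert = \lvert H : H \cap G_i\rvert$, so that $\lvert G : HG_i \rvert = n_i / k_i$.

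Before combining these I would pin down the two relevant index sequences. If $H$ is infinite then $k_i \to \infty$: the subgroups $H \cap G_i$ form a descending chain of open subgroups of $H$ with $\bigcap_i (H \cap G_i) = 1$, so the indices $k_i$ are non-decreasing, and were they bounded the chain would stabilise, forcing $H \cap G_{i_0} = 1$ and hence $H$ finite. If instead $\lvert G : H \rvert = \infty$, then $\lvert G : HG_i \rvert \to \infty$: the open subgroups $HG_i$ descend with $\bigcap_i HG_i = H$ (as $H$ is closed and the $G_i$ form a base of neighbourhoods of~$1$), so their indices are non-decreasing, and were they bounded the chain would stabilise to~$H$, making $H$ open.

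For part~(1), applying the Schreier formula to $H \cap G_i \le_{\mathrm{o}} H$ (using that $H$ is finitely generated) gives $\dd(H \cap G_i) \le 1 + k_i\,(\dd(H)-1)$. Dividing by $\dd(G_i) \ge 1 + \RG(G)\, n_i$ and using $k_i = n_i/\lvert G : HG_i\rvert$, I obtain
\[
\frac{\dd(H \cap G_i)}{\dd(G_i)} \;\le\; \frac{1}{1 + \RG(G)\, n_i} \;+\; \frac{\dd(H)-1}{\RG(G)\,\lvert G : HG_i \rvert},
\]
and both terms vanish in the limit because $n_i \to \infty$ and $\lvert G : HG_i \rvert \to \infty$. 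For part~(2), applying the Schreier formula instead to $HG_i \le_{\mathrm{o}} G$ (using that $G$ is finitely generated) gives $\dd(HG_i) \le 1 + \lvert G : HG_i\rvert\,(\dd(G)-1) = 1 + (n_i/k_i)(\dd(G)-1)$, whence
\[
\frac{\dd(HG_i)}{\dd(G_i)} \;\le\; \frac{1}{1 + \RG(G)\, n_i} \;+\; \frac{\dd(G)-1}{\RG(G)\, k_i},
\]
which tends to $0$ because $n_i \to \infty$ and $k_i \to \infty$.

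The estimates themselves are routine once this framework is in place; the only real content, and thus the main obstacle, is to identify the correct Schreier inequality to invoke in each case --- inside $H$ for part~(1), inside $G$ for part~(2) --- and to verify that the governing index sequences genuinely diverge rather than stabilise. That divergence is precisely where the hypotheses enter: the infiniteness of $H$ forces $k_i \to \infty$, while the infinite index $\lvert G : H\rvert$ forces $\lvert G : HG_i\rvert \to \infty$, each via the observation that a descending chain of open subgroups with the appropriate intersection cannot have bounded index without terminating.
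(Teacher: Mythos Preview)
Your proof is correct and follows essentially the same approach as the paper: bound $\dd(G_i)$ below by $\RG(G)\,\lvert G:G_i\rvert$, apply the Schreier formula to $H\cap G_i \le_\mathrm{o} H$ for~(1) and to $HG_i \le_\mathrm{o} G$ for~(2), and observe that the relevant index ($\lvert G:HG_i\rvert$ in~(1), $\lvert HG_i:G_i\rvert$ in~(2)) diverges. The paper is slightly terser in that it absorbs the ``$+1$'' into a $\dd(H)$ versus $\dd(H)-1$ slack and asserts the index divergence without the explicit chain argument you supply, but the substance is identical.
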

	
\begin{proof}
  Put $R = \RG(G) = \RG(G,\mathcal{S}) > 0$, and note that
  $\dd(G_i) \geq R\, \lvert G : G_i \rvert$ for all $i \in \N_0$.

  First suppose that $H \le_\mathrm{c} G$ is finitely generated and
  $\lvert G : H \rvert = \infty$. Then each $\dd(H\cap G_i)$ can be
  bounded by means of the Schreier index formula applied to
  $H\cap G_i \le_\mathrm{o} H$, and $\lvert G : HG_i \rvert$ tends to
  infinity as $i$ does.  This yields
  \[
  \frac{\dd(H\cap G_i)}{\dd(G_i)} \leq \frac{(\dd(H)-1) \lvert H : H
    \cap G_i \rvert}{R\, \lvert G : G_i \rvert} \leq \frac{\dd(H) \lvert HG_i : G_i
    \rvert}{R\, \lvert G : HG_i \rvert \lvert HG_i : G_i \rvert} =
  \frac{\dd(H)}{R\, \lvert G : HG_i \rvert} \xrightarrow[i\to \infty]{} 0.
  \]
		
  Now suppose that $H \le_\mathrm{c} G$ is infinite.  Then, similarly,
  each $\dd(HG_i)$ can be
  bounded by means of the Schreier index formula applied to
  $HG_i \le_\mathrm{o} G$, and $\lvert HG_i : G_i \rvert$ tends to
  infinity as $i$ does.  This yields
  \[
  \frac{\dd(HG_i)}{\dd(G_i)} \leq \frac{(\dd(G)-1) \lvert G : H G_i
    \rvert}{R\, \lvert G : G_i \rvert} \leq \frac{\dd(G)}{R\, \lvert
    HG_i : G_i \rvert} \xrightarrow[i\to\infty]{} 0. \qedhere
  \]
\end{proof}

We require two further auxiliary results, which are closely related to
one another.
	
\begin{Prop} \label{Prop:fg_subgroups_hdim_0} Let $G$ be a finitely
  generated pro-$p$ group of positive rank gradient, and let
  $H \le_\mathrm{c} G$ be finitely generated with
  $\lvert G : H \rvert = \infty$.  Then $H$ has strong Hausdorff
  dimension $\hdim_G^\mathcal{F}(H) = 0$ with respect to the Frattini
  series~$\mathcal{F}$ of~$G$.
\end{Prop}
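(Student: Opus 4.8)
The plan is to show that $\hdim_G^\mathcal{F}(H) = 0$ is given by a proper limit (not merely a lower limit), by controlling the quantity $\log_p \lvert H\Phi_i(G) : \Phi_i(G)\rvert$ relative to $\log_p \lvert G : \Phi_i(G)\rvert$ as $i \to \infty$. The key point is that the Frattini quotients encode minimal numbers of generators: for a pro-$p$ group $P$ one has $\lvert P : \Phi(P)\rvert = p^{\dd(P)}$, and more generally the successive Frattini quotients of $P$ record the generator numbers of the terms $\Phi_i(P)$. So I would aim to translate the indices appearing in the Hausdorff dimension into generator counts, and then invoke Lemma~\ref{Lem:ratio_gens_decreases}(1), which already tells us that $\dd(H \cap G_i)/\dd(G_i) \to 0$ for $G_i = \Phi_i(G)$.

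**First I would** set $G_i = \Phi_i(G)$ and observe that $\mathcal{F}$ is a filtration, so the formula for $\hdim_G^\mathcal{F}(H)$ applies. The cleanest route uses the second expression in the displayed formula for Hausdorff dimension, namely $\log_p \lvert H : H \cap G_i\rvert / \log_p \lvert G : G_i\rvert$. I expect to bound the numerator $\log_p \lvert H : H \cap \Phi_i(G)\rvert$ from above in terms of $\dd(H \cap \Phi_i(G))$ or a closely related generator count, and to bound the denominator $\log_p \lvert G : \Phi_i(G)\rvert$ from below by something comparable to $\dd(\Phi_i(G))$ or $\lvert G : \Phi_i(G)\rvert$. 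Since $\RG(G) > 0$ gives $\dd(G_i) \ge R\,\lvert G : G_i\rvert$, the denominator grows at least linearly in $\lvert G : G_i\rvert$, which is the growth driving the whole estimate.

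**The main obstacle** will be matching up the two kinds of data cleanly: the numerator naturally involves the index $\lvert H : H \cap \Phi_i(G)\rvert$, whereas Lemma~\ref{Lem:ratio_gens_decreases}(1) is phrased in terms of $\dd(H \cap G_i)$. Bridging these requires a Schreier-type estimate for $H$ itself. Concretely, since $H \cap \Phi_i(G) \le_\mathrm{o} H$ with $H$ finitely generated, the Schreier index formula bounds $\dd(H \cap \Phi_i(G)) - 1 \le (\dd(H) - 1)\lvert H : H \cap \Phi_i(G)\rvert$, and conversely the order of the finite $p$-group $H/(H \cap \Phi_i(G))$ is controlled by its generator number and nilpotency data. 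I would argue that $\log_p \lvert H : H \cap \Phi_i(G)\rvert$ is, up to bounded multiplicative factors coming from the structure of $H$, comparable to $\dd(H \cap \Phi_i(G))$, so that the ratio in the Hausdorff dimension is squeezed by $\dd(H \cap G_i)/\dd(G_i)$ up to a constant depending only on $\dd(G)$, $\dd(H)$ and $R$.

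**Finally**, once this comparison is in place, the full limit (not just the lower limit) follows: Lemma~\ref{Lem:ratio_gens_decreases}(1) asserts genuine convergence $\dd(H \cap G_i)/\dd(G_i) \to 0$, so the corresponding ratio of logarithms also converges to $0$. This upgrades the lower limit defining $\hdim_G^\mathcal{F}(H)$ to a proper limit equal to $0$, which is exactly the assertion that $H$ has \emph{strong} Hausdorff dimension $0$. I would close by noting that the hypothesis $\lvert G : H\rvert = \infty$ is essential, as it is precisely what forces $\lvert G : HG_i\rvert \to \infty$ in the lemma and hence drives the ratio to zero rather than to a positive constant.
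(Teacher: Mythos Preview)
Your plan identifies the right ingredients---Lemma~\ref{Lem:ratio_gens_decreases}(1) and the Frattini identity $\lvert P:\Phi(P)\rvert = p^{\dd(P)}$---but the bridge you propose in the ``main obstacle'' paragraph does not hold. The assertion that $\log_p\lvert H:H\cap G_i\rvert$ is comparable, up to bounded multiplicative factors, to $\dd(H\cap G_i)$ (and similarly $\log_p\lvert G:G_i\rvert$ to $\dd(G_i)$) is false. Already for the denominator: since $\dd(G_i)\ge R\,\lvert G:G_i\rvert$, one has $\dd(G_i)$ growing at least exponentially in $\log_p\lvert G:G_i\rvert$, so no constant-factor comparison is possible. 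For the numerator the situation is even worse: if $H\cong\Z_p$ then $\dd(H\cap G_i)=1$ for all~$i$, while $\log_p\lvert H:H\cap G_i\rvert\to\infty$. The Schreier inequality you invoke bounds $\dd(H\cap G_i)$ \emph{above} by the index, which is the wrong direction for what you need.

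The missing idea is to compare \emph{layer by layer} rather than at a single level~$i$. The key observation (which you allude to in your first paragraph but then abandon) is that $\Phi(H\cap G_j)\subseteq H\cap\Phi(G_j)=H\cap G_{j+1}$, whence
\[
\log_p\lvert H\cap G_j:H\cap G_{j+1}\rvert \le \dd(H\cap G_j),
\qquad
\log_p\lvert G_j:G_{j+1}\rvert = \dd(G_j).
\]
Summing over $j$ from some $i_0$ to $i-1$ and using that $\dd(H\cap G_j)/\dd(G_j)\le 1/n$ for $j\ge i_0$ (from Lemma~\ref{Lem:ratio_gens_decreases}) gives
\[
\frac{\log_p\lvert H\cap G_{i_0}:H\cap G_i\rvert}{\log_p\lvert G_{i_0}:G_i\rvert}
\le \frac{\sum_{j=i_0}^{i-1}\dd(H\cap G_j)}{\sum_{j=i_0}^{i-1}\dd(G_j)}
\le \frac{1}{n},
\]
and the initial segment up to $i_0$ contributes a bounded term that becomes negligible. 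This is exactly the paper's argument. Your outline was one step away: replace the attempted direct comparison at level~$i$ with this telescoping sum, and the proof goes through.
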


\begin{proof}
  We show that $\hdim_G^\mathcal{F}(H) \le \nicefrac{1}{n}$ for any
  given~$n \in \N$, and from the argument it will be clear that $H$
  has strong Hausdorff dimension.

  We write $\mathcal{F} \colon G_i = \Phi_i(G)$, $i \in \N_0$.
  Clearly, $\log_p \lvert G_i : G_{i+1} \rvert = \dd(G_i)$ for
  $i \ge 0$, and from
  $\Phi( H \cap G_i) \subseteq H\cap \Phi(G_i) = H\cap G_{i+1}$ we
  deduce that
  \[
  \log_p \lvert H \cap G_i : H \cap G_{i+1} \rvert \leq \log_p \lvert
  H \cap G_i : \Phi(H \cap G_i) \rvert = \dd(H \cap G_i) \quad \text{for $i\ge 0$.}
  \]
  By Lemma~\ref{Lem:ratio_gens_decreases}, we find $i_0 \in \N$ such
  that $\dd(H \cap G_i)/\dd(G_i) \leq \nicefrac{1}{n}$ for
  $i \geq i_0$.  These observations yield, for $i \ge i_0$,
  \[
  \frac{\log_p \lvert H \cap G_{i_0} : H \cap G_i \rvert}{\log_p
    \lvert G_{i_0} : G_i \rvert} \leq \frac{\sum_{j = i_0}^{i-1}
    \dd(H\cap G_j)}{\sum_{j = i_0}^{i-1}\dd(G_j)} \leq \frac{\sum_{j =
      i_0}^{i-1} \dd(H\cap G_j)}{\sum_{j = i_0}^{i-1} n\dd(H\cap G_j)}
  = \frac{1}{n},
  \]
  and hence
  \[
  \hdim_G^\mathcal{F}(H) = \varliminf_{\substack{i\to \infty \\[2pt] i
      \ge i_0}} \frac{\overbrace{\log_p \lvert H : H \cap G_{i_0}
      \rvert}^{\text{constant}} + \log_p \lvert H \cap G_{i_0} : H
    \cap G_i \rvert}{\underbrace{\log_p \lvert G : G_{i_0}
      \rvert}_{\text{constant}} + \log_p \lvert G_{i_0} : G_i \rvert}
  \le \frac{1}{n}. \qedhere
  \]
\end{proof}
	
\begin{Prop} \label{Prop:normal_subgroups_hdim_1} Let $G$ be a
  finitely generated pro-$p$ group of positive rank gradient, and let
  $N \trianglelefteq_\mathrm{c} G$ be an infinite normal subgroup.
  Then $N$ has strong Hausdorff dimension $\hdim_G^\mathcal{F}(N) = 1$
  with respect to the Frattini series~$\mathcal{F}$ of~$G$.
\end{Prop}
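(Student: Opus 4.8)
The plan is to mirror the proof of Proposition~\ref{Prop:fg_subgroups_hdim_0}, but invoking part~(2) of Lemma~\ref{Lem:ratio_gens_decreases} in place of part~(1). Writing $\mathcal{F} \colon G_i = \Phi_i(G)$, I first record that, since $G_i \le NG_i \le G$,
\[
\frac{\log_p \lvert NG_i : G_i \rvert}{\log_p \lvert G : G_i \rvert} = 1 - \frac{\log_p \lvert G : NG_i \rvert}{\log_p \lvert G : G_i \rvert}.
\]
It therefore suffices to prove that the complementary ratio on the right tends to~$0$; this will simultaneously force the defining lower limit of $\hdim_G^\mathcal{F}(N)$ to equal~$1$ and to be a genuine limit, so that $N$ acquires strong Hausdorff dimension.

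For the denominator I use the familiar identity $\log_p \lvert G : G_i \rvert = \sum_{j=0}^{i-1} \dd(G_j)$, which comes from $\log_p \lvert G_j : G_{j+1}\rvert = \dd(G_j)$. For the numerator I telescope along the descending chain $G = NG_0 \supseteq NG_1 \supseteq \cdots$, obtaining $\log_p \lvert G : NG_i \rvert = \sum_{j=0}^{i-1} \log_p \lvert NG_j : NG_{j+1} \rvert$. The key group-theoretic input is the layer bound
\[
\log_p \lvert NG_j : NG_{j+1} \rvert \le \dd(NG_j),
\]
which I would deduce from the inclusion $\Phi(NG_j) \subseteq NG_{j+1}$. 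Indeed $NG_{j+1} = N[G_j,G_j]G_j^{\,p}$, and since $N \trianglelefteq G$ every commutator of elements of $NG_j$ lies in $N[G_j,G_j]$ while every $p$-th power lies in $NG_j^{\,p}$; hence $\Phi(NG_j) = [NG_j,NG_j](NG_j)^p \subseteq NG_{j+1}$. This yields $\lvert NG_j : NG_{j+1} \rvert \le \lvert NG_j : \Phi(NG_j) \rvert = p^{\dd(NG_j)}$.

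Combining the two displays gives
\[
\frac{\log_p \lvert G : NG_i \rvert}{\log_p \lvert G : G_i \rvert} \le \frac{\sum_{j=0}^{i-1} \dd(NG_j)}{\sum_{j=0}^{i-1} \dd(G_j)}.
\]
Since $N$ is infinite, Lemma~\ref{Lem:ratio_gens_decreases}(2) gives $\dd(NG_j)/\dd(G_j) \to 0$, while the partial sums $\sum_{j<i} \dd(G_j) = \log_p \lvert G : G_i \rvert$ diverge, as $\dd(G_j) \ge \RG(G)\, \lvert G : G_j \rvert$ with $\RG(G) > 0$. A standard Ces\`aro-type estimate then forces the ratio of partial sums to~$0$: given $\varepsilon > 0$, choose $j_0$ with $\dd(NG_j) \le \varepsilon\, \dd(G_j)$ for $j \ge j_0$, split off the finitely many initial terms, and let $i \to \infty$. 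This establishes that the complementary ratio tends to~$0$, whence $\hdim_G^\mathcal{F}(N) = 1$ as a proper limit.

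I expect the two mild technical points to be the layer inclusion $\Phi(NG_j) \subseteq NG_{j+1}$ -- which is purely formal, relying only on the normality of~$N$ together with the verbal definition of~$\mathcal{F}$ -- and the passage from the term-wise convergence $\dd(NG_j)/\dd(G_j) \to 0$ to the convergence of the ratio of partial sums. Neither is deep, and both should be carried out exactly in the spirit of the proof of Proposition~\ref{Prop:fg_subgroups_hdim_0}.
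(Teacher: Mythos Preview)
Your proposal is correct and follows essentially the same route as the paper's proof: reduce to showing the complementary ratio $\log_p\lvert G:NG_i\rvert/\log_p\lvert G:G_i\rvert$ tends to~$0$, bound each layer by $\log_p\lvert NG_j:NG_{j+1}\rvert \le \dd(NG_j)$, and invoke Lemma~\ref{Lem:ratio_gens_decreases}(2) together with the divergence of $\sum_j \dd(G_j)$. The only cosmetic difference is that the paper obtains the layer bound via the quotient identity $NG_j/NG_{j+1}\cong (G_jN/N)/\Phi(G_jN/N)$, giving $\log_p\lvert NG_j:NG_{j+1}\rvert=\dd(G_jN/N)\le\dd(NG_j)$, whereas you argue directly for the inclusion $\Phi(NG_j)\subseteq NG_{j+1}$; these are equivalent, and your Ces\`aro formulation is just the paper's ``fix $n$ and choose $i_0$'' step rephrased.
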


\begin{proof}
  We write $\mathcal{F} \colon G_i = \Phi_i(G)$, $i \in \N_0$, and
  observe that
  \[
  \hdim_G^\mathcal{F}(N) = \varliminf_{i\to \infty} \frac{\log_p \lvert NG_i : G_i
    \rvert}{\log_p \lvert G : G_i \rvert} = 1 - \varlimsup_{i\to
    \infty} \frac{\log_p \lvert G : NG_i \rvert}{\log_p \lvert G : G_i
    \rvert}.
  \]
  Hence it suffices to show that the upper limit on the right-hand
  side equals~$0$ and the Hausdorff dimension will automatically be
  strong.  Proceeding in a similar way to the proof of
  Proposition~\ref{Prop:fg_subgroups_hdim_0}, we show that the upper
  limit is at most $\nicefrac{1}{n}$ for any given~$n \in \N$.

  Again, $\log_p \lvert G_i : G_{i+1} \rvert = \dd(G_i)$ for
  $i \ge 0$, and we observe that 
  \[
  \log_p \lvert N G_i : N G_{i+1} \rvert = \log_p \lvert G_iN/N :
  \Phi(G_iN/N) \rvert = \dd(G_iN/N) \le \dd(NG_i) \quad \text{for
    $i\ge 0$.}
  \]
  By Lemma~\ref{Lem:ratio_gens_decreases}, we find $i_0 \in \N$ such
  that $\dd(N G_i)/\dd(G_i) \leq \nicefrac{1}{n}$ for $i \geq i_0$.
  These observations yield, for $i \ge i_0$,
  \[
  \frac{\log_p \lvert NG_{i_0} : NG_i \rvert}{\log_p \lvert G_{i_0} :
    G_i \rvert} \leq \frac{\sum_{j = i_0}^{i-1} \dd(N G_j)}{\sum_{j =
      i_0}^{i-1}\dd(G_j)} \leq \frac{\sum_{j = i_0}^{i-1} \dd(N
    G_j)}{\sum_{j = i_0}^{i-1} n\dd(N G_j)} = \frac{1}{n},
  \]
  and hence
  \[
  \varlimsup_{i\to \infty} \frac{\log_p \lvert G : NG_i \rvert}{\log_p
    \lvert G : G_i \rvert} = \varlimsup_{\substack{i\to \infty \\[2pt]
      i \ge i_0}} \frac{\overbrace{\log_p \lvert G : N G_{i_0}
      \rvert}^{\text{constant}} + \log_p \lvert N G_{i_0} : N G_i
    \rvert}{\underbrace{\log_p \lvert G : G_{i_0}
      \rvert}_{\text{constant}} + \log_p \lvert G_{i_0} : G_i \rvert}
  \le \frac{1}{n}. \qedhere
  \]
\end{proof}

From Proposition~\ref{Prop:fg_subgroups_hdim_0} we deduce the
following corollary.

\begin{Cor} \label{cor:fg_quotient_same_hdim} Let $G$ be a finitely
  generated pro-$p$ group of positive rank gradient, and let
  $\mathcal{F}$ denote the Frattini series of~$G$.  Then the following
  hold.
  \begin{enumerate}[\rm (1)]
  \item If $A, B \le_\mathrm{c} G$ are such that
    $AB = \{ ab \mid a \in A, b \in B \} \le_\mathrm{c} G$ is a
    subgroup and $A$ is finitely generated with
    $\lvert G : A \rvert = \infty$, then
    $\hdim_G^{\mathcal{F}}(AB) = \hdim^{\mathcal{F}}_G(B)$.
  \item If $K \trianglelefteq_\mathrm{c} H \le_\mathrm{c} G$
    such that $H/K$ is finitely generated and
    $\lvert G : H \rvert = \infty$, then
    $\hdim^{\mathcal{F}}_G(H) = \hdim^{\mathcal{F}}_G(K)$.
  \end{enumerate}
\end{Cor}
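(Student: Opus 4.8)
The plan is to prove both parts by establishing the one non-trivial inequality in each and combining it with the obvious inequality coming from containment; moreover, part~(2) will be reduced to part~(1).

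For part~(1), write $\mathcal{F}\colon G_i = \Phi_i(G)$, $i \in \N_0$. Since $B \le_\mathrm{c} AB$ we have $BG_i \le ABG_i$, and hence $\hdim_G^{\mathcal{F}}(B) \le \hdim_G^{\mathcal{F}}(AB)$ directly from the description of Hausdorff dimension as a lower limit of logarithmic densities. For the reverse inequality I would estimate $\lvert ABG_i : G_i \rvert$ by factoring it as $\lvert ABG_i : BG_i \rvert \cdot \lvert BG_i : G_i \rvert$ and bounding the first factor: as $G_i \trianglelefteq G$, both $BG_i$ and $ABG_i$ are subgroups, and since $A \cap G_i \subseteq A \cap BG_i$ the isomorphism theorem gives $\lvert ABG_i : BG_i \rvert = \lvert A : A \cap BG_i \rvert \le \lvert A : A \cap G_i \rvert$. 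Taking $\log_p$ yields $\log_p \lvert ABG_i : G_i \rvert \le \log_p \lvert A : A \cap G_i \rvert + \log_p \lvert BG_i : G_i \rvert$. The key input is now Proposition~\ref{Prop:fg_subgroups_hdim_0}: because $A$ is finitely generated of infinite index, it has \emph{strong} Hausdorff dimension $\hdim_G^{\mathcal{F}}(A) = 0$, that is, $\log_p \lvert A : A \cap G_i \rvert / \log_p \lvert G : G_i \rvert \to 0$. Dividing the last inequality by $\log_p \lvert G : G_i \rvert$ and passing to the lower limit, the vanishing first summand is absorbed (a summand tending to $0$ does not affect a $\varliminf$), so $\hdim_G^{\mathcal{F}}(AB) \le \hdim_G^{\mathcal{F}}(B)$, and equality follows.

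For part~(2) I would avoid estimating indices inside the quotient $G/K$, which need not have positive rank gradient, and instead reduce to part~(1). Since $H/K$ is finitely generated, I lift a finite topological generating set of $H/K$ to elements of $H$ and let $A \le_\mathrm{c} H$ be the finitely generated closed subgroup they generate. The image of $A$ in $H/K$ is then a closed subgroup containing all these generators, so $AK = H$. As $A \le H$ and $\lvert G : H \rvert = \infty$, also $\lvert G : A \rvert = \infty$, and $AK = H$ is a subgroup because $K \trianglelefteq H$. Applying part~(1) with $B = K$ then gives $\hdim_G^{\mathcal{F}}(H) = \hdim_G^{\mathcal{F}}(AK) = \hdim_G^{\mathcal{F}}(K)$.

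The only real content lies in part~(1), and there the main point -- indeed the reason strong Hausdorff dimension is emphasised in Proposition~\ref{Prop:fg_subgroups_hdim_0} -- is that the error term $\log_p \lvert A : A \cap G_i \rvert$ is negligible on the logarithmic scale $\log_p \lvert G : G_i \rvert$; the proper-limit statement is precisely what licenses replacing the $\varliminf$ of the sum by the $\varliminf$ of its dominant summand. The remaining ingredients are routine index manipulations (the second isomorphism theorem and the factorisation of indices), and the reduction in part~(2) is a standard lifting argument.
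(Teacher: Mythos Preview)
Your proof is correct and follows essentially the same approach as the paper: for part~(1) you factor $\lvert ABG_i : G_i \rvert$ through $BG_i$, bound $\lvert ABG_i : BG_i \rvert \le \lvert AG_i : G_i \rvert$, and invoke the strong Hausdorff dimension~$0$ of~$A$ from Proposition~\ref{Prop:fg_subgroups_hdim_0}; for part~(2) you lift finitely many generators of $H/K$ to obtain a finitely generated $A$ with $AK = H$ and apply part~(1). The paper's proof is identical in substance, presented as a single chain of displayed (in)equalities.
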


\begin{proof}
  (1) Let $A, B \le_\mathrm{c} G$ be such that
  $AB = \{ ab \mid a \in A, b \in B \} \le G$ and $A$ is finitely
  generated.  Using Proposition~\ref{Prop:fg_subgroups_hdim_0}, we
  obtain
  \begin{align*}
    \hdim^{\mathcal{F}}_G(AB)
    & = \varliminf_{i\to \infty} \frac{\log_p \lvert ABG_i : B
      G_i \rvert  +\log_p \lvert B G_i : G_i \rvert}{\log_p \lvert
      G : G_i \rvert}\\
    &\leq \varliminf_{i\to \infty} \frac{\log_p \lvert AG_i : 
      G_i \rvert  +\log_p \lvert B G_i : G_i \rvert}{\log_p \lvert
      G : G_i \rvert}\\
    & =  \lim_{i\to \infty} \frac{\log_p \vert AG_i:G_i
      \rvert}{\log_p \lvert G:G_i \rvert} + \varliminf_{i\to
      \infty} \frac{\log_p \lvert BG_i:G_i \rvert}{\log_p \lvert
      G:G_i \rvert} \\
    & = \hdim^{\mathcal{F}}_G(B). 
  \end{align*}
	
  (2) The claim follows from (1), upon taking $B = K$ and
  $A = \langle a_1, \ldots, a_d \rangle$ such that $AB = H$.
\end{proof}

We are ready to prove our first result,
Theorem~\ref{Thm:posRG_normal_sgps_inf_gen}.
         
\begin{proof}[Proof of Theorem~\ref{Thm:posRG_normal_sgps_inf_gen}]
  The implications $(3) \Rightarrow (2) \Rightarrow (1)$ are
  straightforward, and it remains to show $(1) \Rightarrow (3)$.
  Suppose that $H$ contains an infinite subgroup $K \le_\mathrm{c} G$
  that is subnormal via finitely generated successive quotients.  This
  means: $K \le_\mathrm{c} H$ with $\lvert K \rvert = \infty$ and
  there is a subnormal chain
  $G = K_0\trianglerighteq K_1\trianglerighteq \ldots \trianglerighteq
  K_s = K$
  such that each factor $K_{i-1}/K_i$, for $i \in \{1,\ldots,s\}$, is
  finitely generated.  By Proposition~\ref{Prop:fg_subgroups_hdim_0}
  it suffices to show that $\hdim_G^\mathcal{F}(K) = 1$, where
  $\mathcal{F}$ denotes the Frattini series of~$G$.

  Choose $j \in \{0,1,\ldots,s\}$ maximal with respect to
  $\lvert G : K_j \rvert < \infty$.  Clearly, if $j = s$ then
  $K \le_\mathrm{o} G$ and hence $\hdim_G^\mathcal{F}(K) = 1$.  Now
  suppose that $j < s$.  As $K_j \le_\mathrm{o} G$, we find $i \in \N$
  such that $\Phi_i(G)$, the $i$th term of the Frattini series, is
  contained in~$K_j$.  Replacing $G$ by $\Phi_i(G)$ and all other
  subgroups by their intersections with $\Phi_i(G)$, we may assume
  without loss of generality that $G = K_0 = \ldots = K_j$ and 
  $K_{j+1} \trianglelefteq_\mathrm{c} G$.
  Proposition~\ref{Prop:normal_subgroups_hdim_1} implies that
  $\hdim_G^\mathcal{F}(K_{j+1}) =1$.  Now, several applications of 
  Corollary~\ref{cor:fg_quotient_same_hdim}, part~(2), yield that
  $1 = \hdim_G^\mathcal{F}(K_{j+1}) = \hdim_G^\mathcal{F}(K_{j+2}) =
  \ldots = \hdim_G^\mathcal{F}(K_s)$.
\end{proof}

As indicated above, we briefly record an alternative argument that is
similar to the one used in the proofs of ~\cite[Prop.~13]{AbNi12}
and~\cite[Thm.~2.7]{Sh17}.

\begin{proof}[Second proof of Theorem~\ref{Thm:posRG_normal_sgps_inf_gen}]
  As in the first proof, it suffices to show $(1) \Rightarrow (3)$.
  Suppose that $H, K \le_\mathrm{c} G$ and
  $G = K_0\trianglerighteq K_1\trianglerighteq \ldots \trianglerighteq
  K_s = K$
  are given as before.  For a contradiction, assume that
  $\lvert G : H \rvert = \infty$.

  We show that, for $U \trianglelefteq_\mathrm{o} G$,
  \begin{equation} \label{equ:to-0-claim} \frac{\dd(U) - 1}{\lvert G
      :U \rvert} \to 0 \qquad \text{as
      $\lvert G : H U \rvert \to \infty$ and
      $\lvert KU : U \rvert \to \infty$ simultaneously;}
  \end{equation}
  this yields the required contradiction, because $\RG(G) > 0$ and
  $\lvert G : H \rvert, \lvert K \rvert = \infty$.

  Observe that
  \begin{equation} \label{equ:d-U-estimate} \dd(U) \le \dd(U \cap H) +
    \sum\nolimits_{i = 1}^s \dd((U \cap K_{i-1})/(U \cap K_i)).
  \end{equation}
  From
  $\dd(U \cap H) \le (\dd(H) - 1) \lvert H : U \cap H \rvert + 1 \le
  \dd(H) \lvert HU : U \rvert + 1$ we deduce that
  \begin{equation} \label{equ:U-cap-H-to-0} \frac{\dd(U \cap H)
      -1}{\lvert G : U \rvert} \le \frac{\dd(H)}{\lvert G : HU
      \rvert} \to 0 \qquad \text{as
      $\lvert G : HU \rvert \to \infty$.}
  \end{equation}
  For $1 \le i \le s$, we observe that
  $(U \cap K_{i-1})/(U \cap K_i) \cong (U K_i \cap K_{i-1})/K_i$ and
  that
  $\lvert K_{i-1} : U K_i \cap K_{i-1} \rvert = \lvert K_{i-1} U : K_i
  U \rvert$; this gives
  \[
  \dd((U \cap K_{i-1})/(U \cap K_i) \le (\dd(K_{i-1}/K_i) - 1) \lvert
  K_{i-1} U : K_i U \rvert \le \dd(K_{i-1}/K_i) \lvert G : K U \rvert.
  \]
  We conclude that
  \begin{equation} \label{equ:rest-to-0} \frac{\sum_{i = 1}^s \dd((U
      \cap K_{i-1})/(U \cap K_i))}{\lvert G : U \rvert} \le
    \frac{\sum_{i = 1}^s \dd(K_{i-1}/K_i)}{\lvert K U : U \rvert} \to
    0 \qquad \text{as $\lvert KU : U \rvert \to \infty$.}
  \end{equation}
  The claim~\eqref{equ:to-0-claim} follows
  from~\eqref{equ:d-U-estimate}, \eqref{equ:U-cap-H-to-0}
  and~\eqref{equ:rest-to-0}.
\end{proof}


\subsection{Rank gradient and $\ell^2$-Betti
  numbers} \label{subsec:Gaboriau}

Theorem~\ref{Thm:posRG_normal_sgps_inf_gen} can be regarded as a
pro-$p$ analogue of Gaboriau's theorem~\cite[Thm.~6.8]{Ga02} for
countably infinite groups with positive first $\ell^2$-Betti number.
	
Several results in the literature point to a relationship between the
rank gradient and the first $\ell^2$-Betti number of finitely
generated groups.  Let $\Gamma$ be a finitely generated residually
finite group and denote its first $\ell^2$-Betti number
by~$b_1^{(2)}(\Gamma)$.  By combining \cite[Thm.~1]{AbNi12} and
\cite[Cor.~3.23]{Ga02}, it is established that
\[
\RG(\Gamma, \mathcal{S}) = \lim_{i \to \infty}
\frac{\dd(\Gamma_i)-1}{\lvert G : \Gamma_i \rvert} \geq
b_1^{(2)}(\Gamma),
\]
for any descending chain $\mathcal{S} \colon \Gamma_i$, $i \in \N_0$,
of finite-index normal subgroups
$\Gamma_i \trianglelefteq_\mathrm{f} \Gamma$ with trivial intersection
$\bigcap_i \Gamma_i =1$.  In fact, in all known examples, the two
quantities coincide, but it is not known whether this is true in
general.

In order to provide a link between $\Gamma$ and the rank gradient of
its pro-$p$ completion, it is useful to introduce the (relative)
\emph{mod-$p$ homology gradient} (or \emph{$p$-rank gradient}) of
$\Gamma$ with respect to a descending chain
$\mathcal{S} \colon \Gamma_i$, $i \in \N_0$, consisting of
finite-$p$-power-index normal subgroups; it is
\[
\RG_p(\Gamma,\mathcal{S}) = \lim_{i\to \infty} \frac{\dim_{\F_p}
  \mathrm{H}_1(\Gamma_i, \F_p) }{\lvert \Gamma : \Gamma_i \rvert}.
\]
Notice that, if $\bigcap_i \Gamma_i =1$ and the completion
$\widehat{\Gamma}_\mathcal{S}$ with respect to $\mathcal{S}$ and the
pro-$p$ completion~$\widehat{\Gamma}_p$ of $\Gamma$ coincide, then
the (relative) mod-$p$ homology gradient coincides with the rank
gradient of~$\widehat{\Gamma}_p$.  In general, it is not known under
what conditions the equality
$\RG_p(\Gamma, \mathcal{S}) =\RG(\widehat{\Gamma}_p)$ holds;
compare~\cite{AbNi12}.
We thank Andrei Jaikin-Zapirain for pointing this out to us.

Every group $\Gamma$ with positive mod-$p$ homology gradient has
exponential subgroup growth and, if $\Gamma$ is finitely presented, it
virtually maps onto a non-abelian free group; see \cite{La09} and
\cite{La10}, respectively.  Inspired by L\"uck's approximation theorem
for finitely presented residually finite groups, Ershov and
L\"uck~\cite{ErLu14} showed that
$\RG_p(\Gamma,\mathcal{S}) \geq b_1^{(2)}(\Gamma)$ for every finitely
generated infinite group $\Gamma$, equipped with a descending chain
$\mathcal{S}$ of finite-$p$-power-index normal subgroups that have
trivial intersection.  In particular, for every residually finite-$p$
group $\Gamma$ with $b_1^{(2)}(\Gamma)>0$, the pro-$p$ completion
$\widehat{\Gamma}_p$ satisfies the hypotheses of
Theorem~\ref{Thm:posRG_normal_sgps_inf_gen}.

It would be of considerable interest to prove a version of
Theorem~\ref{Thm:posRG_normal_sgps_inf_gen}, where $G$ is taken to be
finitely generated (discrete), residually finite and of positive rank
gradient. As this would be an analogue of Gaboriau's theorem for
discrete groups of positive rank gradient, it might explain, or at
least strengthen, the connection between rank gradient and first
$\ell^2$-Betti number.  A result like this is beyond the scope of the
present paper.  Nevertheless, without any extra work, we can state at
least a partial result in this direction.

\begin{Cor} \label{Cor:res_p_posRG_Gaboriau}
  Let $\Gamma$ be a finitely generated, residually finite-$p$ group of
  positive mod\nobreakdash-$p$ homology gradient, with pro-$p$
  completion~$\widehat{\Gamma}_p$.  Suppose further:
  \begin{itemize}
  \item[$(\dagger)$] For every finitely generated subgroup
    $\Delta \le \Gamma$ with $\lvert \Gamma : \Delta \rvert = \infty$
    the closure $\overline{\Delta}$ in $\widehat{\Gamma}_p$ satisfies
    $\lvert \widehat{\Gamma}_p : \overline{\Delta} \rvert = \infty$.
  \end{itemize}
  Then every finitely generated subgroup of $\Gamma$ that is subnormal
  via finitely generated successive quotients is either finite or of
  finite index in~$\Gamma$.
\end{Cor}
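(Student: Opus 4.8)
The plan is to deduce the corollary directly from Theorem~\ref{Thm:posRG_normal_sgps_inf_gen} by transferring the situation to the pro-$p$ completion $\widehat{\Gamma}_p$. First I would verify that $\widehat{\Gamma}_p$ satisfies the hypotheses of Theorem~\ref{Thm:posRG_normal_sgps_inf_gen}. Since $\Gamma$ is finitely generated, so is $\widehat{\Gamma}_p$. The positivity of the mod-$p$ homology gradient is precisely the statement that $\RG(\widehat{\Gamma}_p) > 0$: as remarked in the discussion preceding the corollary, when $\Gamma$ is residually finite-$p$ the relevant chain of finite-$p$-power-index subgroups realises the pro-$p$ completion, so $\RG_p(\Gamma,\mathcal{S})$ equals the rank gradient of $\widehat{\Gamma}_p$, which is therefore positive.

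Next I would take a finitely generated subgroup $\Delta \le \Gamma$ that is subnormal in $\Gamma$ via finitely generated successive quotients, say via a chain $\Gamma = \Delta_0 \trianglerighteq \Delta_1 \trianglerighteq \ldots \trianglerighteq \Delta_s = \Delta$ with each $\Delta_{i-1}/\Delta_i$ finitely generated, and pass to closures in $\widehat{\Gamma}_p$. The closure operation preserves the subnormal chain structure: one obtains $\widehat{\Gamma}_p = \overline{\Delta_0} \trianglerighteq \overline{\Delta_1} \trianglerighteq \ldots \trianglerighteq \overline{\Delta_s} = \overline{\Delta}$, and each successive quotient $\overline{\Delta_{i-1}}/\overline{\Delta_i}$ is a quotient of (the pro-$p$ completion of) $\Delta_{i-1}/\Delta_i$, hence topologically finitely generated. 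Thus $\overline{\Delta}$ is subnormal in $\widehat{\Gamma}_p$ via finitely generated successive quotients, and $H = \overline{\Delta}$ is a finitely generated closed subgroup of $\widehat{\Gamma}_p$.

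Now I would split into two cases according to the dichotomy in Theorem~\ref{Thm:posRG_normal_sgps_inf_gen}. If $\overline{\Delta}$ is infinite, then $\overline{\Delta}$ is an infinite subgroup that is subnormal via finitely generated successive quotients and is itself finitely generated, so by the theorem it must be open in $\widehat{\Gamma}_p$, i.e. $\lvert \widehat{\Gamma}_p : \overline{\Delta} \rvert < \infty$. The contrapositive of hypothesis~$(\dagger)$ then forces $\lvert \Gamma : \Delta \rvert < \infty$. If instead $\overline{\Delta}$ is finite, I would argue that $\Delta$ is finite as well: the natural map $\Delta \to \overline{\Delta}$ has image dense in the finite group $\overline{\Delta}$, hence surjective, and since $\Gamma$ is residually finite-$p$ this map is injective, so $\Delta \cong \overline{\Delta}$ is finite. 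This yields exactly the two permitted outcomes.

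The main obstacle to watch is the bookkeeping in the second paragraph — confirming that taking closures genuinely preserves subnormality and the finite generation of successive quotients, and that hypothesis~$(\dagger)$ is being applied in the correct direction. The residual finiteness-$p$ assumption is what makes the finite case clean, and $(\dagger)$ is exactly the ingredient needed to pull the open/infinite-index dichotomy back from $\widehat{\Gamma}_p$ to $\Gamma$; beyond these the argument is a routine transfer, requiring no new estimates.
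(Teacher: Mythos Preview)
Your proposal is correct and follows essentially the same route as the paper's proof: pass to the pro-$p$ completion, verify that $\overline{\Delta}$ inherits the properties (finitely generated, subnormal via finitely generated successive quotients, infinite index by~$(\dagger)$, infinite by residual finiteness-$p$), and invoke Theorem~\ref{Thm:posRG_normal_sgps_inf_gen}. The paper packages this as a one-line contradiction argument whereas you spell out the case analysis and the preservation of the subnormal chain under closure, but the content is the same.
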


\begin{proof}
  Let $\Delta\leq \Gamma$ be subnormal via finitely generated
  successive quotients and assume for a contradiction that $\Delta$ is
  finitely generated, infinite and of infinite index in $\Gamma$.
  Then, in accordance with our hypothesis,
  $\overline{\Delta} \le_\mathrm{c} \widehat{\Gamma}_p$ has the
  corresponding properties, in contradiction to
  Theorem~\ref{Thm:posRG_normal_sgps_inf_gen}.
\end{proof}
	
\begin{Rmk}
  The condition $(\dagger)$ in
  Corollary~\ref{Cor:res_p_posRG_Gaboriau} is weaker than being
  $p$-WLERF in~\cite{ErJZ13}, but imposes a noticeable
  restriction. For instance, non-abelian-free groups have to be
  excluded: it was shown in~\cite{Ba04} that, for any $m \in \N_0$,
  the subgroup $\Delta = \langle x[y,x], y, z_1, \ldots, z_m\rangle$
  of the free group $\Psi$ on $m+2$ elements $x, y, z_1,\ldots,z_m$ is
  dense in the pro-nilpotent topology of $\Psi$ (and thus, in the
  pro-$p$ topology for any~$p$) but of infinite index in $\Psi$ (and,
  in fact, isolated).  We thank Michal Ferov for alerting us to this
  example.

  A slight modification shows that any group $\Gamma$ that is a free
  product of a virtually non-abelian-free group and any other finitely
  generated group does not satisfy $(\dagger)$.
\end{Rmk}

\begin{Qu}
  Identify examples of finitely generated, residually finite-$p$
  groups of positive mod-$p$ homology gradient that satisfy condition
  $(\dagger)$ in Corollary~\ref{Cor:res_p_posRG_Gaboriau}.
\end{Qu}

Some potential candidates include the examples constructed
in~\cite{Os11} and~\cite{SP12}. 

\begin{Rmk}
  The arguments in the second proof of
  Theorem~\ref{Thm:posRG_normal_sgps_inf_gen} can be used verbatim to
  prove an analogue of Corollary~\ref{Cor:res_p_posRG_Gaboriau} for
  finitely generated, residually finite groups $\Gamma$ of positive
  rank gradient, where the pro-$p$ completion $\widehat{\Gamma}_p$ is
  replaced by the profinite completion~$\widehat{\Gamma}$.  We do not
  pursue this line of investigation here, but merely point out that
  the analogue of $(\dagger)$ in this setting is a weaker assumption,
  since free discrete groups do satisfy it;
  compare~\cite[Thm.~5.1]{Ha49}.
\end{Rmk}

We end this section with another natural question.  It appears that
all currently known examples of finitely generated pro-$p$ groups of
positive rank gradient have positive $p$-deficiency (which gives a
lower bound for the rank gradient), or satisfy a similar condition.

\begin{Qu}
  Is there a finitely generated pro-$p$ group of positive rank
  gradient whose $p$-deficiency in the sense of~\cite{SP12} (and in
  any similar sense) is~$0$?
\end{Qu}


\subsection{Hausdorff spectrum with respect to the Frattini series}

Consider a direct product $G = A \times B$ of finitely generated
pro-$p$ groups $A$ and~$B$, equipped with a filtration series
$\mathcal{S} \colon G_i$, $i \in \N_0$, and let
$\mathcal{S} \vert_A \colon A_i = A \cap G_i$, $i \in \N_0$, and
$\mathcal{S} \vert_B \colon B_i = B \cap G_i$, $i \in \N_0$, denote
the induced filtration series.  In general, there is no
straightforward reduction of the Hausdorff dimension function
$\hdim_G^\mathcal{S}$ to the related Hausdorff dimension functions
$\hdim_A^\mathcal{S \vert_A}$ and $\hdim_B^\mathcal{S \vert_B}$.  Even
if $G_i = A_i \times B_i$ for all $i \in \N_0$ and if
$H \le_\mathrm{c} G$ decomposes as $H = C \times D$ with
$C \le_\mathrm{c} A$ and $D \le_\mathrm{c} B$, it is \emph{not}
generally true that
$\hdim_G^\mathcal{S}(H) = \hdim_A^{\mathcal{S} \vert_A}(C)
\hdim_G^\mathcal{S}(A) + \hdim_B^{\mathcal{S} \vert_B}(D)
\hdim_G^\mathcal{S}(B)$;
the right-hand side merely gives a lower bound for the left-hand side.

Despite these difficulties the proof of the Interval
Theorem~\cite[Thm.~5.4]{KlThZR19} can be slightly modified to yield
the following convenient tool for investigating the Hausdorff spectra
of finite direct products of pro-$p$ groups.

\begin{Prop} \label{prop:KlThZR19} Let
  $G = H_1 \times \ldots \times H_r$ be a finite direct product of
  countably-based pro-$p$ groups $H_j$, equipped with a
  filration~$\mathcal{S} \colon G_i = H_{1,i} \times \ldots \times
  H_{r,i}$,
  $i \in \N_0$, that arises as the product of
  filtrations~$\mathcal{S}_j \colon H_{j,i}$, $i \in \N_0$, of~$H_j$
  for $1 \le j \le r$.

  Let $K \le_\mathrm{c} G$ be such that
  $K = K_1 \times \ldots \times K_r$ with $K_j \le_\mathrm{c} H_j$ for
  $1 \le j \le r$.  Suppose that $0 \le \varepsilon < \delta \le 1$ are
  such that, for each $j \in \{1,\ldots,r\}$, the group $K_j$ has
  strong Hausdorff dimension
  $\hdim_{H_j}^{\mathcal{S}_j}(K_j) = \delta$ and that all finitely
  generated subgroups $L \le_\mathrm{c} K_j$ have Hausdorff dimension
  $\hdim_{H_j}^{\mathcal{S}_j}(L) \le \varepsilon$.

  Then the Hausdorff spectrum of $G$ satisfies
  $(\varepsilon, \delta] \subseteq \hspec^{\mathcal{S}}(G)$.
\end{Prop}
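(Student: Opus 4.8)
The plan is to show that for every prescribed value $\xi \in (\varepsilon,\delta]$ there is a closed subgroup $M \le_\mathrm{c} G$ with $\hdim_G^{\mathcal{S}}(M) = \xi$, by adapting the telescoping construction from the proof of the Interval Theorem~\cite[Thm.~5.4]{KlThZR19} to the product filtration. Write $s_i = \log_p \lvert G : G_i \rvert$ and $b_{j,i} = \log_p \lvert H_j : H_{j,i}\rvert$, so that $s_i = \sum_{j=1}^r b_{j,i}$, and for $U \le_\mathrm{c} G$ put $f_U(i) = \log_p\lvert U G_i : G_i\rvert$. First I would record that $K = K_1 \times \dots \times K_r$ itself has strong Hausdorff dimension $\delta$ in $G$: since $f_K(i) = \sum_j \log_p\lvert K_j H_{j,i} : H_{j,i}\rvert$, the quotient $f_K(i)/s_i = \sum_j \bigl(\log_p\lvert K_j H_{j,i}:H_{j,i}\rvert / b_{j,i}\bigr)\,(b_{j,i}/s_i)$ is a convex combination of the factor ratios with weights $b_{j,i}/s_i$, each factor ratio converging to $\delta$ by the strong-dimension hypothesis; hence $f_K(i)/s_i \to \delta$ as well. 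This identifies $\delta$ as the top of the interval, realised by $M = K$, and as the target for the \emph{filling} stages.

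For $\xi \in (\varepsilon,\delta)$ I would build $M = \overline{\bigcup_{k} M^{(k)}}$ as an increasing union of finitely generated subgroups $M^{(k)} = M_1^{(k)} \times \dots \times M_r^{(k)}$, with $M_j^{(k)} \le_\mathrm{c} K_j$ finitely generated and $\bigcup_k M_j^{(k)}$ dense in $K_j$, governed by a single shared schedule of threshold levels $n_1 < N_1 < n_2 < N_2 < \cdots$. On a filling stage I enlarge the generating set in each factor so that $M^{(k)} G_{n_k} = K G_{n_k}$, i.e.\ each $M_j^{(k)}$ agrees with $K_j$ modulo $H_{j,n_k}$; by the first paragraph this forces $f_{M^{(k)}}(n_k)/s_{n_k}$ close to~$\delta$. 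On the intervening coasting stage I keep $M^{(k)}$ fixed and let the level grow to $N_k$, using the hypothesis $\hdim_{H_j}^{\mathcal{S}_j}(M_j^{(k)}) \le \varepsilon$ in each factor to drive the total ratio $f_{M^{(k)}}(i)/s_i$ down to approximately~$\xi$ at level $N_k$. Taking the $N_k$ as the witnessing levels for the lower limit and the fills as spikes that return the ratio to~$\delta$, one obtains $\varliminf_i f_M(i)/s_i = \xi$, whence $\xi \in \hspec^{\mathcal{S}}(G)$.

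The main obstacle is the coasting step, and it is precisely the phenomenon emphasised just before the statement: the product Hausdorff dimension $f_{M^{(k)}}(i)/s_i$ is a convex combination of the factor ratios whose weights $b_{j,i}/s_i$ vary with~$i$, so the per-factor bounds $\hdim_{H_j}^{\mathcal{S}_j}(M_j^{(k)}) \le \varepsilon$ do \emph{not} by themselves pull the total ratio down, since distinct factors may attain low density at unrelated levels, leaving the convex combination bounded away from~$\xi$. The heart of the modification is therefore to \emph{coordinate} the factors: one must locate, for a fixed finitely generated $M^{(k)}$, a common level $N_k$ at which all $r$ factor ratios $f_{M_j^{(k)}}(N_k)/b_{j,N_k}$ lie below~$\xi$ simultaneously. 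I would arrange this by exploiting the freedom in the shared schedule --- choosing each filling depth $n_k$ and then the coasting endpoint $N_k \gg n_k$ so large that the contribution of the finitely many fixed generators in every factor is diluted by $b_{j,N_k}$ down past~$\xi$, invoking the per-factor bound $\hdim_{H_j}^{\mathcal{S}_j}(M_j^{(k)}) \le \varepsilon$ (which supplies cofinally many such levels in each factor) together with an adaptive choice of the approximants so that these low-density levels align. A secondary technical point, inherited verbatim from the Interval Theorem, is controlling the jumps $s_{i+1}-s_i$ so that the ratio is caught within $o(1)$ of~$\xi$ rather than overshooting; this is handled exactly as in \cite[Thm.~5.4]{KlThZR19}, the only genuinely new ingredient being the bookkeeping of the moving weights across the $r$ factors.
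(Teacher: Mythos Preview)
Your overall plan --- run the fill/coast construction from the Interval Theorem inside $K = K_1\times\cdots\times K_r$ with a single shared schedule across the factors --- is precisely the ``slight modification'' the paper indicates, and your convex-combination argument that $K$ itself has strong Hausdorff dimension~$\delta$ in~$G$ is correct and is the right way to realise the top endpoint.

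The gap is in the coasting step, and it is not closed by what you write. With $M^{(k)}=M_1^{(k)}\times\cdots\times M_r^{(k)}$ held fixed you need the total ratio $g(i)=\sum_j a_j(i)/s_i$ to fall to~$\xi$ at some level. The hypothesis only supplies $\varliminf_i a_j(i)/b_{j,i}\le\varepsilon$ separately in each factor, and this does \emph{not} force $\varliminf_i g(i)<\xi$: one can write down increasing sequences $a_1,a_2,b$ with $\varliminf a_j/b=0$ for both~$j$ while $(a_1+a_2)/(2b)$ stays bounded away from~$0$, simply by letting the two factors take turns being small. Your sentence ``adaptive choice of the approximants so that these low-density levels align'' restates the goal rather than achieving it --- the filling step gives you no handle on \emph{where} a subsequently fixed $M_j^{(k)}$ will attain small relative density, so there is no mechanism on offer for forcing the $r$ cofinal low-density sets to intersect. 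The natural repair, which covers every use of this proposition in the paper, is to note that in each application the finitely generated $L\le K_j$ in fact have \emph{strong} Hausdorff dimension at most~$\varepsilon$: this is explicit in Proposition~\ref{Prop:fg_subgroups_hdim_0} and its extension in the proof of Theorem~\ref{thm:full_spec_iterated_verbal}, and for the Zassenhaus series one reads it off from the proof of Proposition~\ref{prop:finitely_gen_subalg_density_0} (the complementary ideal there has density~$1$ as a proper limit, hence~$\mathsf{S}$ has density~$0$ as a proper limit). Under the strong hypothesis, for any $\eta>\varepsilon$ one eventually has \emph{every} factor ratio $a_j(i)/b_{j,i}<\eta$, whence $g(i)<\eta$ as well by your own convex-combination observation; the intermediate-value bookkeeping from the single-factor proof then locates $N_k$ with $g(N_k)$ within~$o(1)$ of~$\xi$, and your construction goes through.
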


Our strategy for proving Theorem~\ref{Thm:full-spectrum-Frattini} is
to apply this result in a situation, where the
$K_j \trianglelefteq_\mathrm{c} H_j$ are infinite normal subgroups of
infinite index, with Hausdorff dimension $\delta=1$, and such that
$\varepsilon$ can be taken to be~$0$.
	
\begin{Lem}\label{Lem:quotient_by_finite_subgroup_has_positive_rg}
  Let $G$ be a finitely generated infinite pro-$p$ group, and let
  $N \trianglelefteq_\mathrm{c} G$ be a finite normal subgroup.  Then
  $\RG(G/N) = \lvert N \rvert \,\RG(G)$.
\end{Lem}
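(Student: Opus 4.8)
The plan is to compute both $\RG(G)$ and $\RG(G/N)$ as limits along one carefully chosen filtration of~$G$, exploiting the fact recalled in the introduction that $\RG$ is independent of the filtration used. Since $G$ is pro-$p$ and $N$ is finite, I can find an open normal subgroup $U \trianglelefteq_\mathrm{o} G$ with $U \cap N = 1$ (the open normal subgroups of $G$ intersect trivially, and $N$ has only finitely many non-identity elements to separate). Now fix any filtration $\mathcal{S} \colon G_i$, $i \in \N_0$, of~$G$. As the $G_i$ form a base of neighbourhoods of the identity, there is an index $i_0$ with $G_i \le_\mathrm{c} U$, and hence $G_i \cap N = 1$, for all $i \ge i_0$; I shall work only with the tail of the filtration, where this holds. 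Note also that $G/N$ is infinite, since $G$ is infinite and $N$ finite, so its rank gradient is defined via a genuine filtration.

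Writing $\overline{G} = G/N$ and $\overline{G}_i = G_iN/N$, I would first check that $\overline{\mathcal{S}} \colon \overline{G}_i$, $i \in \N_0$, is a filtration of~$\overline{G}$: the $\overline{G}_i$ are open normal in $\overline{G}$, form a descending chain, and satisfy $\bigcap_i \overline{G}_i = 1$ because $\bigcap_i G_iN = N$ (a point-set argument using the finiteness of $N$: any element of the intersection is $g_i n_i$ with $g_i \in G_i$, $n_i \in N$, so some $n \in N$ repeats infinitely often, forcing $xn^{-1} \in \bigcap_i G_i = 1$). Then for $i \ge i_0$ the condition $G_i \cap N = 1$ gives two key identities. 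On the one hand $\overline{G}_i = G_iN/N \cong G_i/(G_i \cap N) = G_i$, so $\dd(\overline{G}_i) = \dd(G_i)$. On the other hand $\lvert G_iN : G_i \rvert = \lvert N : N \cap G_i \rvert = \lvert N \rvert$, whence $\lvert \overline{G} : \overline{G}_i \rvert = \lvert G : G_iN \rvert = \lvert G : G_i \rvert / \lvert N \rvert$.

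Substituting these into the defining limit finishes the argument:
\[
\RG(G/N) = \lim_{i \to \infty} \frac{\dd(\overline{G}_i) - 1}{\lvert \overline{G} : \overline{G}_i \rvert} = \lim_{i \to \infty} \frac{\dd(G_i) - 1}{\lvert G : G_i \rvert / \lvert N \rvert} = \lvert N \rvert \lim_{i \to \infty} \frac{\dd(G_i) - 1}{\lvert G : G_i \rvert} = \lvert N \rvert \, \RG(G),
\]
where all limits exist and compute the respective rank gradients by the filtration-independence recalled in the introduction. The only genuinely delicate step is ensuring that the filtration of $G$ descends to a bona fide filtration of $G/N$ — in particular the verification $\bigcap_i G_iN = N$ — together with the bookkeeping relating $\lvert G : G_i \rvert$ and $\lvert \overline{G} : \overline{G}_i \rvert$ by the factor $\lvert N \rvert$; both become transparent once the filtration is truncated to the range $i \ge i_0$ where $G_i \cap N = 1$.
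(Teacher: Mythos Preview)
Your proof is correct and follows essentially the same approach as the paper's own argument: pass to the induced filtration $G_iN/N$ on $G/N$, use finiteness of $N$ to get $G_i \cap N = 1$ for large $i$, and read off $\dd(G_iN/N) = \dd(G_i)$ together with $\lvert G/N : G_iN/N \rvert = \lvert G : G_i \rvert / \lvert N \rvert$. You are more careful than the paper in explicitly verifying that $\bigcap_i G_iN = N$ so that the induced chain really is a filtration, but otherwise the arguments are identical.
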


\begin{proof}
  A filtration $\mathcal{S} \colon G_i$, $i \in \N$, of~$G$ induces a
  filtration $\mathcal{S} \vert_{G/N} \colon G_iN/N$, $i \in \N$, of
  $G/N$.  Since $N$ is finite, there exists $i_0 \in \N$ such that
  $G_i \cap N = 1$ for $i \geq i_0$.  This implies
  \[
  \RG(G/N) = \lim_{i \to \infty} \frac{\dd(G_iN/N) -1}{\lvert G/N : G_iN/N
    \rvert} =  \lim_{i \to \infty} \frac{\dd(G_i)-1}{\lvert G : G_iN
    \rvert} = \lvert N \rvert \, \RG(G). \qedhere
  \]
\end{proof}
	
We thank Mikhail Ershov for pointing out to us that the next result is
essentially proved in~\cite[Thm.~3.1]{BaSP13}.
	
\begin{Prop} \label{Prop:pos_rg_implies_not_ji} Let $G$ be a finitely
  generated pro-$p$ group of positive rank gradient.  Then $G$ admits
  an infinite normal subgroup $N \trianglelefteq_\mathrm{c} G$ such
  that $G/N$ has positive rank gradient.  Consequently, $G$ is not
  commensurable to any just infinite pro-$p$ group.
\end{Prop}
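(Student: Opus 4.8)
The plan is to prove the existence statement first and then read off the ``consequently'' clause as a formal consequence; the construction of the infinite normal subgroup $N$ with $\RG(G/N) > 0$ is the heart of the matter. Here I would follow and adapt the argument of Barnea and Schlage-Puchta~\cite[Thm.~3.1]{BaSP13}. The guiding principle is that positive rank gradient provides, along any filtration $\mathcal{S} \colon G_i$, a surplus of generators that grows linearly in the index, namely $\dd(G_i) - 1 \ge \RG(G)\, \lvert G : G_i \rvert$. This surplus is robust enough that one can impose an infinite system of relations -- equivalently, pass to an infinite normal subgroup $N \trianglelefteq_\mathrm{c} G$ of infinite index -- while retaining a positive linear lower bound for the number of generators of the images $G_iN/N$ relative to $\lvert G : G_iN \rvert$. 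Concretely, one arranges $N$ so that $G/N$ is infinite and $\RG(G/N) = \lim_{i} \frac{\dd(G_iN/N)-1}{\lvert G : G_iN \rvert} > 0$. Verifying that the generator surplus genuinely survives the quotient is the main obstacle, and it is precisely what the cited result secures; the earlier propositions of this section do not by themselves produce such an $N$.

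Granting such an $N$, the remaining assertions become formal. Since $\RG(G/N) > 0$ forces $G/N$ to be infinite, the subgroup $N$ has infinite index; as $N$ is also infinite, $G$ possesses a nontrivial closed normal subgroup that is neither finite nor open, so $G$ is not just infinite. For the commensurability statement the key observation is that positivity of the rank gradient is a commensurability invariant. For an open subgroup $H \le_\mathrm{o} G$ one has $\RG(H) = \lvert G : H \rvert\, \RG(G)$: choosing a filtration of $G$ whose terms eventually lie inside $H$ and comparing indices yields this scaling directly from the relative definition. Combined with Lemma~\ref{Lem:quotient_by_finite_subgroup_has_positive_rg}, which gives $\RG(G/F) = \lvert F \rvert\, \RG(G)$ for finite normal $F$, it follows that passing to open subgroups and to quotients by finite normal subgroups both preserve positivity of the rank gradient.

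Finally, suppose for a contradiction that $G$ is commensurable to a just infinite pro-$p$ group~$J$. Then there are open subgroups $U \le_\mathrm{o} G$ and $V \le_\mathrm{o} J$, together with finite normal subgroups, whose corresponding finite quotients are isomorphic. Applying the two scaling relations on both sides transports $\RG(G) > 0$ to $\RG(J) > 0$. Thus $J$ is itself a finitely generated pro-$p$ group of positive rank gradient, and the existence statement furnishes an infinite normal subgroup of $J$ of infinite index, contradicting the assumption that $J$ is just infinite. Hence $G$ is not commensurable to any just infinite pro-$p$ group.
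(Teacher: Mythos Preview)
Your proposal is essentially correct, and for the existence part you follow the same approach as the paper: both invoke the inequality underlying \cite[Thm.~3.1]{BaSP13}. The paper is slightly more explicit --- it introduces the invariant $s(N,G) = \inf_X \sum_{x \in X} p^{-\nu(x)}$ (the infimum over normal generating sets $X$ of $N$), records the pro-$p$ version of the key inequality $\RG(G/N) \ge \RG(G) - s(N,G)$, and then takes $N$ to be the normal closure of a single element $x \in G^{p^k} \smallsetminus G^{p^{k+1}}$ for $k$ large enough that $\RG(G) - p^{-k} > 0$ --- whereas you defer the construction entirely to the citation. This is fine as a proof sketch but you should eventually spell out how the infinite $N$ is actually produced.

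Where you genuinely diverge from the paper is in the ``consequently'' clause. The paper \emph{iterates} the construction to obtain an infinite strictly ascending chain $N_1 \lneqq N_2 \lneqq \ldots$ of normal subgroups with infinite successive quotients, and concludes that $G$ has infinitely many commensurability classes of normal subgroups; a just infinite group (and anything commensurable to one) has only finitely many. Your route is different and arguably more direct: you observe that positivity of rank gradient is a commensurability invariant (via the scaling $\RG(H) = \lvert G : H \rvert\,\RG(G)$ for open $H$ and Lemma~\ref{Lem:quotient_by_finite_subgroup_has_positive_rg} for finite normal quotients), transport $\RG > 0$ to the putative just infinite group $J$, and then apply the existence statement to $J$ itself for an immediate contradiction. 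Both arguments are valid; yours avoids the iteration but requires the (easy) verification that $\RG>0$ passes across commensurability, while the paper's chain argument yields the stronger structural conclusion that $G$ has a rich lattice of normal subgroups. One wording quibble: when you write ``finite normal subgroups, whose corresponding finite quotients are isomorphic'' you presumably mean \emph{quotients by finite normal subgroups}; in fact, for abstract commensurability of pro-$p$ groups it suffices to take $U \cong V$ for open $U \le_\mathrm{o} G$, $V \le_\mathrm{o} J$ and the finite-kernel step is unnecessary.
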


\begin{proof}
  Following~\cite{BaSP13}, for each normal subgroup
  $N \trianglelefteq_\mathrm{c} G$, we define $s(N,G)$ to be the
  infimum, over all (countable) sets $X$ of \emph{normal generators}
  of $N$ in $G$, of the quantity $\sum_{x \in X} p^{-\nu(x)}$, where
  $\nu(x) = \sup \{ k \in \N_0 \mid x \in G^{p^k} \}$ for each $x$.
  We observe that Theorem~3.1 in~\cite{BaSP13}, which is stated and
  proved for finitely generated discrete groups, translates almost
  verbatim to the pro-$p$ context:
  \[
  \RG(G/N) \geq \RG(G) - s(N,G).
  \]
		
  As $\RG(G) >0$, we find $k \in \N$ such that $\RG(G) - p^{-k} > 0$.
  As $G$ is infinite, so is the open normal subgroup~$G^{p^k}$, and we
  find among a finite generating set at least one element
  $x \in G^{p^k} \smallsetminus G^{p^{k+1}}$ whose normal closure
  $N = \langle x \rangle^G$ is infinite.  We observe that
  $s(N,G) = p^{-k}$ and conclude that
  \[
  \RG(G/N) \geq \RG(G) - s(N,G) > 0.
  \]
		
  Repeating the above procedure, we find an ascending chain
  $N = N_1 \lneqq N_2 \lneqq \ldots$ of normal subgroups of $G$ such
  that the factor groups $N_{i+1}/N_i$, $i \in \N$, are infinite.  In
  particular, $G$ has infinitely many commensurability classes of
  normal subgroups.  This implies that $G$ is not (abstractly)
  commensurable to a just infinite pro-$p$ group.
\end{proof}

\begin{Cor} \label{cor:has-inf-image} Let $G$ be a finitely generated
  pro-$p$ group of positive rank gradient.  Then $G$ has an infinite
  normal subgroup $N \trianglelefteq_\mathrm{c} G$ with
  $\lvert G : N \rvert = \infty.$
\end{Cor}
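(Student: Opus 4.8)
The plan is to read this off almost immediately from Proposition~\ref{Prop:pos_rg_implies_not_ji}. First I would apply that proposition to obtain an infinite closed normal subgroup $N \trianglelefteq_\mathrm{c} G$ for which the quotient $G/N$ again has positive rank gradient. By construction $\lvert N \rvert = \infty$, so the only remaining point is to establish that $\lvert G : N \rvert = \infty$, equivalently that $G/N$ is infinite.

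The key (and essentially the only) observation is that a finitely generated pro-$p$ group of positive rank gradient can never be finite. Indeed, if $Q$ were a finite $p$-group, then the trivial subgroup $1 \le_\mathrm{o} Q$ is open and contributes
\[
\frac{\dd(1) - 1}{\lvert Q : 1 \rvert} = \frac{-1}{\lvert Q \rvert} < 0
\]
to the infimum defining $\RG(Q)$, so that $\RG(Q) \le -\lvert Q \rvert^{-1} < 0$. Hence positivity of the rank gradient forces the group to be infinite. Applying this to $G/N$, which has positive rank gradient by the proposition, shows that $G/N$ is infinite, i.e.\ $\lvert G : N \rvert = \infty$. (In particular this also rules out the degenerate case $N = G$, since the trivial quotient has negative rank gradient.)

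Combining the two facts, $N$ is an infinite closed normal subgroup of $G$ of infinite index, which is exactly the assertion. I do not anticipate any genuine obstacle: the entire content of the statement resides in Proposition~\ref{Prop:pos_rg_implies_not_ji}, and the corollary merely repackages the positivity of $\RG(G/N)$ as the statement that $N$ has infinite index. The only step not already granted is the elementary remark that positive rank gradient implies infiniteness, which is the short computation above.
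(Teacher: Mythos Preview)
Your argument is correct, and it is more direct than the paper's own proof. You read the conclusion off Proposition~\ref{Prop:pos_rg_implies_not_ji} together with the elementary observation that a finite pro-$p$ group $Q$ has $\RG(Q) \le -\lvert Q\rvert^{-1} < 0$, so that $\RG(G/N) > 0$ forces $\lvert G : N\rvert = \infty$.

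The paper instead argues by contradiction: assuming no such $N$ exists, every infinite normal subgroup is open, so by Zorn's Lemma $G$ admits a just infinite quotient $G/N$ with \emph{finite} kernel~$N$; then Lemma~\ref{Lem:quotient_by_finite_subgroup_has_positive_rg} yields $\RG(G/N) = \lvert N\rvert\,\RG(G) > 0$, contradicting the conclusion of Proposition~\ref{Prop:pos_rg_implies_not_ji} that groups of positive rank gradient are not just infinite. Your route bypasses both Zorn's Lemma and Lemma~\ref{Lem:quotient_by_finite_subgroup_has_positive_rg}, exploiting instead the first sentence of Proposition~\ref{Prop:pos_rg_implies_not_ji} (the existence of $N$ with $\RG(G/N) > 0$) rather than its second sentence (the failure of just infiniteness). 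Both arguments are short, but yours is the more economical one.
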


\begin{proof}
  Assume, for a contradiction, that $G$ has no infinite normal
  subgroup $N \trianglelefteq_\mathrm{c} G$ with
  $\lvert G : N \rvert = \infty$.  Using Zorn's Lemma we see that $G$
  has a just infinite quotient $G/N$ whose kernel $N$ must be finite.
  Lemma~\ref{Lem:quotient_by_finite_subgroup_has_positive_rg} implies
  that $G/N$ has positive rank gradient, in contradiction to
  Proposition~\ref{Prop:pos_rg_implies_not_ji}.
\end{proof}

\begin{proof}[Proof of Theorem~\ref{Thm:full-spectrum-Frattini}]
  Let $G = H_1 \times \ldots \times H_r$ be a non-trivial finite
  direct product of finitely generated pro-$p$ groups $H_i$ of
  positive rank gradient.  Observe that the Frattini
  series~$\mathcal{F}$ of~$G$ decomposes as the product of the
  Frattini series~$\mathcal{F}_j$ of the direct factors~$H_j$ for
  $1 \le j \le r$.

  By Corollary~\ref{cor:has-inf-image}, for each
  $j \in \{1,\ldots,r\}$, the group $H_j$ has an infinite normal
  subgroup $N_j \trianglelefteq_\mathrm{c} H_j$ with
  $\lvert G_j : N_j \rvert = \infty$.  From
  Proposition~\ref{Prop:normal_subgroups_hdim_1} we deduce that $N_j$
  has strong Hausdorff
  dimension~$\hdim_{H_j}^{\mathcal{F}_j} (N_j) = 1$; from
  Proposition~\ref{Prop:fg_subgroups_hdim_0} we conclude that,
  finitely generated subgroups $L \le_\mathrm{c} N_j$ have
  Hausdorff dimension~$0$ with respect to~$\mathcal{F}_j$.  Thus,
  Proposition~\ref{prop:KlThZR19} shows that $G$ has full Hausdorff
  spectrum.
\end{proof}


\section{Hausdorff spectrum with respect to iterated filtration
  series}\label{sec:verbal}

In this section we prove Theorem~\ref {thm:full_spec_iterated_verbal},
which generalises Theorem~\ref{Thm:full-spectrum-Frattini} for certain
groups to iterated verbal filtration series, defined in
Section~\ref{sec:introduction}; the key task is to extend
Propositions~\ref{Prop:fg_subgroups_hdim_0}
and~\ref{Prop:normal_subgroups_hdim_1}.
 
Let $G$ be a finitely generated pro-$p$ group equipped with a
filtration $\mathcal{S} \colon G_i$, $i \in \N_0$.  Let
$H \le_\mathrm{c} G$ be finitely generated with
$\lvert G : H \rvert = \infty$.  In order to prove that $H$ has strong
Hausdorff dimension $\hdim_G^\mathcal{S}(H) = 0$, in analogy to
Proposition~\ref{Prop:fg_subgroups_hdim_0}, it suffices to find
$i_0 \in \N$ and an unbounded monotone increasing sequence
$(m_i)_{i\geq i_0}$ such that
\begin{equation}\label{eqn:suff_cond_fg_sgp_hd_0}
  \vert G_i :  G_{i+1} \rvert \geq \lvert H \cap G_i :  H \cap G_{i+1}
  \rvert^{m_i} \qquad \text{for $i \ge i_0$.}
\end{equation}
For, if \eqref{eqn:suff_cond_fg_sgp_hd_0} holds, then we obtain
for $i \ge i_0$,
\[
\frac{\log_p \lvert H \cap G_{i_0} : H \cap G_i \rvert}{\log_p \lvert
  G_{i_0} : G_i \rvert} = \frac{\sum_{j = i_0}^{i-1} \log_p \lvert H
  \cap G_j : H \cap G_{j+1} \rvert}{\sum_{j = i_0}^{i-1} \log_p \lvert
  G_j : G_{j+1} \rvert} \leq \frac{1}{m_i},
\]
and hence
\begin{equation}\label{eqn:general_filtration_fg_sgp_hdim_0}
\hdim_G^\mathcal{S}(H) = \varliminf_{\substack{i\to \infty \\[2pt] i \ge i_0}}
  \frac{\log_p \lvert H : H \cap G_{i_0}
      \rvert + \log_p \lvert H \cap G_{i_0} : H
    \cap G_i \rvert}{\log_p \lvert G : G_{i_0}
      \rvert + \log_p \lvert G_{i_0} : G_i \rvert}
  \le \frac{1}{m_i} \xrightarrow[i\to \infty]{} 0.
\end{equation}
Moreover, the argument shows that $H$ has strong Hausdorff
dimension~$\hdim_G^{G_i}(H)=0$.
 
Similarly, if $N \trianglelefteq_\mathrm{c} G$ is an infinite normal
subgroup, we can prove that $N$ has strong Hausdorff dimension
$\hdim_G^\mathcal{S}(N) = 1$ in analogy to
Proposition~\ref{Prop:normal_subgroups_hdim_1}, once we have
identified $i_0 \in \N$ and an unbounded monotone increasing sequence
$(m_i)_{i\geq i_0}$ such that
\begin{equation} \label{eqn:suff_cond_normal_sgp_hd_1}
  \lvert G_i :  G_{i+1} \rvert \geq \lvert NG_i :  NG_{i+1}
  \rvert^{m_i} \qquad \text{for $i \geq i_0$.}
\end{equation}

As above, let $H \le_\mathrm{c} G$ be finitely generated with
$\lvert G : H \rvert = \infty$ and let
$N \trianglelefteq_\mathrm{c} G$ be infinite normal.  A natural way to
guarantee that conditions~\eqref{eqn:suff_cond_fg_sgp_hd_0} and
\eqref{eqn:suff_cond_normal_sgp_hd_1} hold is to identify an unbounded
increasing sequence $(m_i)_{i \in \N}$ such that
\begin{enumerate}[(a)]
\item $G_i$ maps homomorphically onto $(H\cap G_i)^{m_i}$,
  respectively $(NG_i)^{m_i}$, where
  \label{suff_conds_surject}
\item the image of $G_{i+1}$ is contained in
  $(H\cap G_{i+1})^{m_i}$, respectively $(NG_{i+1})^{m_i}$.
  \label{suff_conds_kernels_match}
\end{enumerate}

One way of ensuring that \eqref{suff_conds_kernels_match} holds is by
requiring that $G_{i+1}$ be a verbal subgroup of $G_i$, for each $i$.
Recall that a \emph{verbal subgroup} of a group $G$ is one generated
by all values $w(g_1,\ldots,g_r)$, for $g_1, \ldots, g_r \in G$, of
group words $w = w(X_1,\ldots,X_r)$ in a given set~$\mathcal{W}$.
Examples of filtrations satisfying this include the Frattini series,
which we already covered, and the \emph{iterated $p$-power series}
defined in Section~\ref{sec:introduction}.  One could even take
different sets $\mathcal{W}_i$ of words at each stage to define~$G_i$.

One way of ensuring that \eqref{suff_conds_surject} holds is, for
instance, by arranging that $G_i$ maps onto the free pro-$p$ group on
$m_i\dd(H\cap G_i)$, respectively $m_i\dd(NG_i)$, generators.  

\begin{proof}[Proof of Theorem~\ref{thm:full_spec_iterated_verbal}]
  The main task is to extend
  Propositions~\ref{Prop:fg_subgroups_hdim_0}
  and~\ref{Prop:normal_subgroups_hdim_1} to the situation, where (i)
  $G$ is either a finitely generated non-abelian free pro-$p$ group or
  a non-soluble Demushkin pro-$p$ group and (ii) the Hausdorff
  dimension function is associated to an iterated verbal filtration
  $\mathcal{S} \colon G_i$, $i \in \N_0$, of~$G$.  In particular, $G$
  has positive rank gradient.

  First suppose that $H \le_\mathrm{c} G$ is finitely generated and
  that~$\lvert G : H \rvert = \infty$.
  Lemma~\ref{Lem:ratio_gens_decreases} ensures that for each $n\in\N$,
  there exists $i_0 \in \N$ such that $n\dd(H\cap G_i) \le \dd(G_i)$
  for $i \ge i_0$.  Now, if $G$ is free, then so is each $G_i$; hence
  $G_i$ maps homomorphically onto the direct product of $n$ copies of
  $H \cap G_i$.  It is known that, if $G$ is a non-soluble Demushkin
  group, then so is~$G_i$; compare \cite{De61,An68,Se95}.  In this
  case $G_i$ maps homomorphically onto a free pro-$p$ group of rank
  $\lfloor \dd(G_i)/2 \rfloor$ and consequently onto the direct
  product of $\lfloor n/2 \rfloor -1$ copies of $H\cap G_i$; compare
  \cite[Thm.~12.3.1]{Wi98} and \cite{La67,Se95}.  Still considering
  $i\ge i_0$, write $m$ for the number of copies of $H\cap G_i$ that
  $G_i$ maps onto ($n$ in the first case, $\lfloor n/2 \rfloor -1$ in
  the second case). Since $G_{i+1}$ is a verbal subgroup of $G_{i}$,
  there is an induced epimorphism
  $G_i/G_{i+1} \to ((H\cap G_i)/(H\cap G_{i+1}))^m$.  Repeating this
  argument for $n \to \infty$, we construct an unbounded monotone
  increasing sequence $(m_i)_{i \ge i_0}$ such that
  \eqref{eqn:suff_cond_fg_sgp_hd_0} holds.  Proceeding as
  in~\eqref{eqn:general_filtration_fg_sgp_hdim_0}, we obtain
  $\hdim_G^\mathcal{S}(H)=0$.
		
  Next suppose that $N \trianglelefteq_\mathrm{c} G$ is infinite.
  Arguing very similarly, we obtain $i_0 \in \N$ and an unbounded
  monotone increasing sequence $(m_i)_{i\geq i_0}$ such that
  \eqref{eqn:suff_cond_normal_sgp_hd_1} holds.  From this we derive
  that $\hdim_G^\mathcal{S}(N) = 1$.

  Now let $G = E_1 \times \ldots \times E_r$ be a non-trivial finite
  direct product of finitely generated pro-$p$ groups $E_j$, each of
  which is either free or Demushkin, and let $G$ be equipped with an
  iterated verbal filtration $\mathcal{S} \colon G_i$, $i \in \N_0$.
  Suppose further that $E_1$ is non-soluble.  In order to deduce that
  $\hspec^{\mathcal{S}}(G) = [0,1]$ from
  Proposition~\ref{prop:KlThZR19}, as in the proof of
  Theorem~\ref{Thm:full-spectrum-Frattini}, we only need to explain
  how to deal with possible soluble direct factors~$E_j$ in the
  decomposition of~$G$.  Indeed, let $E = E_j$ be one of these soluble
  factors.  Then each of the groups $E \cap G_i$, $i \in \N_0$, is of
  the form $1$, $C_2$, $\Z_p$ or $\Z_p \ltimes \Z_p$ and can be
  generated by at most $2$~elements.  From this we deduce that
  \[
  \lim_{i \to \infty} \frac{\log_p \lvert E : E \cap G_i
    \rvert}{\log_p \lvert G : G_i \rvert} \le \lim_{i \to \infty}
  \frac{\log_p \lvert E : E \cap G_i \rvert}{\log_p \lvert E_1 : E_1
    \cap G_i \rvert} = 0,
  \]
  because $(E_1 \cap G_i)/(E_1 \cap G_{i+1})$ maps onto
  $((E \cap G_i)/(E \cap G_{i+1}))^{m_i}$ for an unbounded monotone
  increasing sequence $(m_i)_{i \ge i_0}$, by an argument very similar
  to the one given above for $H \le_\mathrm{c} G$.
\end{proof}

\begin{Rmk}
  (1) Perhaps it is not too surprising that our approach works for
  filtrations whose terms are recursively defined as verbal subgroups,
  because such filtrations grow at least as fast as the Frattini
  series.  To wit, it is not hard to show that every proper verbal
  subgroup of a finitely generated pro-$p$ group $G$ is contained in
  the Frattini subgroup $\Phi(G)$ of~$G$, because the elementary
  abelian $p$-group $G/\Phi(G)$ has no proper non-trivial verbal
  subgroups.

  (2) It is an interesting open problem to identify 
  further finitely generated pro-$p$ groups of positive rank
  gradient -- apart from free and Demushkin groups -- for which the
  conclusion of Theorem~\ref{thm:full_spec_iterated_verbal} holds,
  possibly even by a similar reasoning.
\end{Rmk}


\section{Non-abelian free pro-$p$ groups and the Zassenhaus
  series}\label{sec:Zassenhaus}
 
This section contains the proof of
Theorem~\ref{thm:full-spectrum-Zassenhaus-free} which relies on
Lie-theoretic techniques.  In the context of graded (restricted) Lie
algebras, the concept of density, which we recall in
Definition~\ref{defi:density}, is a natural counterpart to the notion
of Hausdorff dimension.

Results for free Lie algebras in~\cite{BaOl15,NeScSh00} imply that
non-zero ideals and finitely generated graded subalgebras
of free Lie algebras have density $1$ and~$0$,
respectively.  We establish analogous results for restricted Lie
algebras; see Propositions~\ref{prop:finitely_gen_subalg_density_0}
and~\ref{prop:subideals_density_1}.  A Lie-theoretic counterpart to
the Interval Theorem, stated at the end of
  Section~\ref{sec:introduction}, yields that free (restricted) Lie
algebras have full density spectrum; see
  Theorem~\ref{thm:all_densities_freelie} and
  Corollary~\ref{cor:all_densities_restricted}.  To obtain the full
Hausdorff spectrum for free pro-$p$ groups from that of free
restricted Lie algebras, we need to translate between
  subgroups of a free pro-$p$ group $F$ and subalgebras of the
  associated free restricted $\F_p$-Lie algebra.  This is possible due
  to consequences of a result of Ershov and
  Jaikin-Zapirain~\cite[Prop.~3.2]{ErJZ13}, which in turn relies on
  power-commutator factorisations in~$F$ resulting from Hall's
  commutator collection process.
 	
It would be very interesting to extend these methods so
that they cover other pro-$p$ groups resembling free pro-$p$
groups and their corresponding restricted $\F_p$-Lie algebras.
Already the case of one-relator groups poses challenges;
we deal with Demushkin pro-$p$ groups in
Section~\ref{sec:Demushkin}.

\begin{Not}
  Group commutators are written as $[x,y] = x^{-1}y^{-1}xy$.  Lie
  commutators are denoted by
  $[\mathsf{x},\mathsf{y}] = \mathsf{x}.\mathrm{ad}(\mathsf{y})$.
  Group and Lie commutators of higher degree are left-normed; for
  instance, $[x, y, z] = [[x,y], z]$ and
  $[\mathsf{x},\mathsf{y}, \mathsf{z}] = [[\mathsf{x},\mathsf{y}],
  \mathsf{z}]$.
  
  Additionally, we write $\langle X\rangle_\mathrm{Lie}$ respectively, $\langle X\rangle_\mathrm{res.\,Lie}$ for the Lie subalgebra, respectively, restricted Lie   subalgebra generated by a set $X$. We also write $\langle X\rangle_L$ for the (restricted) Lie ideal generated by $X$ in $L$.
\end{Not}

\subsection{Associated restricted Lie
  algebras} \label{subsec:assoc-rest-Lie} Recall from
Section~\ref{sec:introduction} the definition of the Zassenhaus series
$\mathcal{Z} \colon Z_i(G)$, $i \in \N$, of a finitely generated
pro-$p$ group~$G$.  It is the fastest descending series with
$[Z_i(G),Z_j(G)] \subseteq Z_{i+j}(G)$ and
$Z_i(G)^p \subseteq Z_{ip}(G)$ for all $i,j\geq 1$.
Lazard~\cite[Thm.~5.6]{La54} proved that
\[
Z_n(G)= \prod\nolimits_{j=0}^{\lceil \log_p(n) \rceil} \gamma_{\lceil
  n/p^j \rceil}(G) ^{\, p^j} \qquad \text{for $n \in \N$,}
\]
where $\gamma_i(G)$ denotes the $i$th term of the lower central series
of~$G$.  Each $Z_n(G)$ is open in $G$ and the Zassenhaus series
provides a filtration of~$G$; compare~\cite[Sec.~11.3]{DidSMaSe99}.
Note that each quotient $\mathsf{R}_n(G) = Z_{n}(G)/Z_{n+1}(G)$ is an
elementary abelian $p$-group, which can be regarded as an
$\F_p$-vector space.  The direct sum
\[
\mathsf{R}(G) =\bigoplus\nolimits_{n = 1}^\infty  \mathsf{R}_n(G)
\]
carries naturally the structure of a finitely generated graded
restricted $\F_p$-Lie algebra, with the Lie bracket and the $p$-map
induced from group commutators and the $p$-power map on
(representatives of) homogeneous elements;
compare~\cite[Sec.~12]{DidSMaSe99} or~\cite{Sh00}.  For convenience we
recall what this means and introduce the usual notation; see~\cite[\S
V.7]{Ja79} or \cite[\S 1.11]{Ba87}.
 
\begin{Defi}\label{Defi:restrictedLie}
  A \emph{restricted Lie algebra} (or \emph{Lie $p$-algebra})
  $\mathsf{R} = (\mathsf{R}, [\cdot,\cdot],
  {\cdot}^{[p]})$
  over the field~$\F_p$ consists of a $\F_p$-Lie algebra $\mathsf{R}$,
  with Lie bracket $[\cdot,\cdot]$, and a \emph{$p$-map}
  $\mathsf{R} \to \mathsf{R}$, $\mathsf{x} \mapsto \mathsf{x}^{[p]}$,
  satisfying the following properties:
  \begin{itemize}
  \item[(i)]
    $[\mathsf{x}, \mathsf{y}^{[p]}] =
    \mathsf{x}.\mathrm{ad}(\mathsf{y})^p$
    for $\mathsf{x},\mathsf{y} \in \mathsf{R}$,
  \item[(ii)] $(\alpha \mathsf{x})^{[p]} = \alpha \mathsf{x}^{[p]}$ for
    $\alpha \in \F_p$ and $\mathsf{x}\in \mathsf{R}$, \hfill (note that
    $\alpha^p = \alpha$ for $\alpha \in \F_p$)
  \item[(iii)]
    $(\mathsf{x} + \mathsf{y})^{[p]} = \mathsf{x}^{[p]} +
    \mathsf{y}^{[p]} + \sum\limits_{i=1}^{p-1}
    s_i(\mathsf{x},\mathsf{y})$
    for $\mathsf{x},\mathsf{y}\in R$, where
    $i \, s_i(\mathsf{x},\mathsf{y})$ is the coefficient of
    $\lambda^{i-1}$ in the generic expansion of
    $\mathsf{x}.(\mathrm{ad}(\lambda \mathsf{x} + \mathsf{y}))^{p-1}$.
  \end{itemize}
  The restricted Lie algebra $\mathsf{R}$ is ($\N$-)\emph{graded}, if
  $\mathsf{R} = \bigoplus_{n=1}^\infty \mathsf{R}_n$ decomposes as a
  direct sum of subspaces, which we refer to as
    \emph{homogeneous components}, such that
  $[\mathsf{R}_m,\mathsf{R}_n] \subseteq
  \mathsf{R}_{m+n}$
  and $\mathsf{R}_n^{\, [p]} \subseteq \mathsf{R}_{pn}$ for
  $m,n \in \N$.

  \emph{Restricted Lie subalgebras} and \emph{restricted Lie ideals}
  of $\mathsf{R}$ are defined in the usual way.  A restricted Lie
  subalgebra $\mathsf{S}$ of $\mathsf{R}$ forms a \emph{subideal} if
  there exists a finite sequence
  \[
  \mathsf{R} \trianglerighteq \mathsf{S}_1 \trianglerighteq
  \mathsf{S}_2 \trianglerighteq \ldots \trianglerighteq \mathsf{S}_n
  = \mathsf{S}
  \]
  of restricted Lie subalgebras of~$\mathsf{R}$, where each term is a
  Lie ideal in the preceding one.
\end{Defi}

\begin{Rmk}\label{rmk:Same_quotients_Zassenhaus}
  For every prime~$p$, the $p$-Zassenhaus series of a discrete
  group~$\Gamma$ is defined in the same way as for pro-$p$ groups and
  gives rise to a corresponding graded restricted $\F_p$-Lie algebra.
  If $\Gamma$ is finitely generated, with pro-$p$ completion
  $G = \widehat{\Gamma}_p$, the quotients $\Gamma/Z_n(\Gamma)$ are
  canonically isomorphic to $G/Z_n(G)$ for $n \in \N$.
  Furthermore, the associated restricted $\F_p$-Lie algebra
  $\mathsf{R}(\Gamma)$ is canonically isomorphic to
  $\mathsf{R}(G)$.
 	 
  If $\Gamma$ is free on $d \ge 2$ generators, then
  $G = \widehat{\Gamma}_p$ is a free pro-$p$ group on $d$ generators,
  and $\mathsf{R}(\Gamma) \cong \mathsf{R}(G)$ is the free restricted
  Lie algebra on $d$ generators, equipped with the natural grading;
  see~\cite[Thm.~6.5]{La54}.
\end{Rmk}

The (lower) density of subalgebras in graded restricted
$\F_p$-Lie algebras and in graded $\F_p$-Lie algebras is defined as
follows; see \cite{NeScSh00,BaOl15} for related notions, indeed
closely related for graded subalgebras.
  
\begin{Defi}\label{defi:density}
  Let $\mathsf{R} = \bigoplus_{n=1}^\infty \mathsf{R}_n$ be a
  non-zero graded restricted $\F_p$-Lie algebra such that
  each homogeneous component $\mathsf{R}_n$ is finite
  dimensional.  For each $n \in \N$, denote by
  $\mathsf{R}_{\ge n} = \bigoplus_{m = n}^\infty
   \mathsf{R}_m$
   the ideal of $\mathsf{R}$ consisting of all elements of degree at
   least~$n$, and set $\mathsf{R}_{> n} = \mathsf{R}_{\ge n+1}$.

  Let $\mathsf{S}$ be a restricted Lie subalgebra of $\mathsf{R}$.
  For each $n \in \N$, put
  \[
  \mathsf{S}_n = \big( (\mathsf{S} \cap \mathsf{R}_{\ge n}) +
  \mathsf{R}_{> n} \big) \cap \mathsf{R}_n
  \]
  so that
  $\mathsf{S}_\mathrm{grd} = \bigoplus_{n=1}^\infty \mathsf{S}_n$ is a
  graded restricted Lie subalgebra of~$\mathsf{R}$.

  The \emph{density} of $\mathsf{S}$ in
  $\mathsf{R}$ is defined as
  \[
  \dens_\mathsf{R}(\mathsf{S}) =
  \dens_\mathsf{R}(\mathsf{S}_\mathrm{grd}) = \varliminf_{n\to \infty}
  \frac{\dim_{\F_p} \big( (\mathsf{S} + \mathsf{R}_{>n} \big) /
    \mathsf{R}_{>n})}{\dim_{\F_p} (\mathsf{R} / \mathsf{R}_{>n})} =
  \varliminf_{n\to \infty} \frac{\sum_{m=1}^n\dim _{\F_p}
    (\mathsf{S}_m)}{\sum_{m=1}^n \dim _{\F_p} (\mathsf{R}_m)}.
  \]
  The \emph{density spectrum} of the graded restricted Lie algebra
  $\mathsf{R}$ is defined as
  \[
  \dspec(\mathsf{R}) = \{ \dens_\mathsf{R}(\mathsf{S}) \mid \text{$\mathsf{S}$ a
    restricted Lie subalgebra of $\mathsf{R}$} \}.
  \]

  Analogously, we define the density of Lie subalgebras $\mathsf{S}$
  in a graded $\F_p$-Lie algebra
  $\mathsf{L} = \bigoplus_{n=1}^\infty \mathsf{L}_n$, with finite
  dimensional homogeneous components $\mathsf{L}_n$, and
  we employ the same notation; in particular,
    $\mathsf{L}_{\ge n} = \bigoplus_{m = n}^\infty \mathsf{L}_m$ and
    $\mathsf{L}_{> n} = \mathsf{L}_{\ge n+1}$ for $n \in \mathbb{N}$.
\end{Defi}

In our setting, $G$ is a finitely generated infinite
pro-$p$ group with associated graded restricted $\F_p$-Lie
algebra~$\mathsf{R}(G)$.  The Hausdorff dimension of a subgroup
$H \le_\mathrm{c} G$ with respect to the Zassenhaus
series~$\mathcal{Z}$ of~$G$ coincides with the density of the
associated graded restricted Lie subalgebra
\[
\mathsf{R}_G(H) = \bigoplus_{n=1}^\infty \mathsf{R}_{G,n}(H), \quad
\text{where
  $\mathsf{R}_{G,n}(H) = \frac{(H \cap Z_n(G))
    Z_{n+1}(G)}{Z_{n+1}(G)}$ for $n \in \N$,}
\]
of~$\mathsf{R}(G)$, that is
$\hdim_G^\mathcal{Z}(H) = \dens_{\mathsf{R}(G)}(\mathsf{R}_G(H))$;
compare~\cite[\S 4.1]{Sh00}.

Note that, if $H$ is (sub)normal in~$G$, then $\mathsf{R}_G(H)$ is a
restricted Lie (sub)ideal in~$\mathsf{R}(G)$; in the converse
direction, if the restricted Lie subalgebra $\mathsf{R}_G(H)$ is
finitely generated, so is the subgroup~$H$.  However, in
  general the mapping $H \mapsto \mathsf{R}_G(H)$ from closed
subgroups of $G$ to restricted Lie subalgebras of~$\mathsf{R}(G)$ is
neither injective nor surjective.  Furthermore, the respective minimal
numbers of generators may change substantially; in particular, a
finitely generated subgroup need not produce a finitely generated
restricted Lie subalgebra.  Nor does a free subgroup need
  to produce a free restricted Lie subalgebra; compare
  Section~\ref{subsec:from-Lie_algebras_to_groups}.

In Section~\ref{subsec:density-spectrum} we compute the density
spectrum of free (restricted) Lie algebras.  In
Section~\ref{subsec:from-Lie_algebras_to_groups} we provide a bridge
to the Hausdorff spectrum of the corresponding free pro-$p$ groups; as
indicated, the transition requires some care.


\subsection{The density spectrum of finitely generated free
  (restricted) Lie algebras} \label{subsec:density-spectrum}

Let $d \in \N$ with $d \ge 2$.  In this section, $\mathsf{R}$ denotes
a free restricted $\F_p$-Lie algebra on $d$ generators
$\mathsf{x}_1, \ldots, \mathsf{x}_d$, and $\mathsf{L}$ denotes a free
$\F_p$-Lie algebra on $\mathsf{x}_1, \ldots, \mathsf{x}_d$ (embedded
as a Lie subalgebra into $\mathsf{R}$).  The Zassenhaus series
of~$\mathsf{R}$ -- or indeed, on a free pro\nobreakdash-$p$ group $F$
on $d$ generators, if we think of $\mathsf{R}$ as
$\mathsf{R} = \mathsf{R}(F)$ -- produces the natural grading
$\mathsf{R} = \bigoplus_{n=1}^\infty \mathsf{R}_n$ associated with Lie
words.  This induces a compatible grading
$\mathsf{L} = \bigoplus_{n=1}^\infty \mathsf{L}_n$ on~$\mathsf{L}$.

In fact, for all practical purposes, $\mathsf{R}$ and $\mathsf{L}$ can
be constructed as subalgebras of the free associative algebra

$\mathsf{A} = \F_p\langle \mathsf{x}_1, \ldots, \mathsf{x}_d
\rangle$,
turned into a restricted $\F_p$-Lie algebra via
$[\mathsf{u},\mathsf{v}] = \mathsf{u} \mathsf{v}
-\mathsf{v} \mathsf{u}$
and $\mathsf{u}^{[p]} = \mathsf{u}^p$ for
$\mathsf{u}, \mathsf{v} \in \mathsf{A}$.  From this perspective, the
homogeneous components $\mathsf{R}_n$ and $\mathsf{L}_n$ arise as
intersections of~$\mathsf{R}$, respectively~$\mathsf{L}$, with the
$\F_p$-subspace $\mathsf{A}_n$ spanned by all products of length $n$
in $\mathsf{x}_1, \ldots, \mathsf{x}_d$; compare~\cite[\S2]{Ba87}.

It is well-known that each homogeneous component $\mathsf{L}_n$ of
$\mathsf{L}$ is spanned, as an $\F_p$-vector space, by linearly
independent \emph{basic Lie commutators} in
$\mathsf{x}_1, \ldots, \mathsf{x}_d$ of weight~$n$; compare~\cite[\S
11]{Ha76} or \cite[\S 2]{Ba87}.  We recall the relevant notions
and record the analogous result for the free restricted
Lie algebra~$\mathsf{R}$.

\begin{Defi} \label{Defi:basic_comm_Lie} The \emph{weight} of a
  non-zero Lie commutator in a countable set $X$ of free
    generators is defined inductively: each generator
  $\mathsf{x} \in X$ has weight~$\wt(\mathsf{x}) = 1$,
  and, if $\mathsf{c}_1, \mathsf{c}_2$ are Lie commutators in $X$ such
  that $[\mathsf{c}_1, \mathsf{c}_2] \ne 0$, then
  $[\mathsf{c}_1, \mathsf{c}_2]$ is a Lie commutator of weight
  $\wt([\mathsf{c}_1,\mathsf{c}_2]) = \wt(\mathsf{c}_1) +
  \wt(\mathsf{c}_2)$.

  The \emph{$X$-basic commutators} are also defined
  inductively.  The $X$-basic commutators of weight $1$ are the
  generators contained in~$X$ in some well-order.  For
  each $n \in \N$ with $n \ge 2$, after defining $X$-basic commutators
  of weight less than~$n$, the $X$-basic commutators of weight~$n$ are
  those Lie commutators $[\mathsf{u},\mathsf{v}]$ such that:
  \begin{enumerate}
  \item $\mathsf{u},\mathsf{v}$ are $X$-basic commutators with
    $\wt(\mathsf{u})+\wt(\mathsf{v}) = n$,
  \item $\mathsf{u} > \mathsf{v}$; and if
    $\mathsf{u} = [\mathsf{y},\mathsf{z}]$ with $X$-basic commutators
    $\mathsf{y}, \mathsf{z}$, then $\mathsf{v} \geq \mathsf{z}$.
  \end{enumerate}
  Lastly, the well-order is extended, subject to the
  condition that $\mathsf{u} < \mathsf{v}$ whenever
  $\wt(\mathsf{u}) < \wt(\mathsf{v})$ and arbitrarily among the new
  $X$-basic commutators of weight~$n$. When the
    generating set $X$ is clear from the context, we speak of
    \emph{basic commutators}, and a similar terminology is used for
    group commutators.
\end{Defi}

For the following fundamental result we refer to~\cite[\S 2~Thm.~1 and \S
2~Prop.~14]{Ba87}.

\begin{Prop} \label{Prop:basis-restricted} Let
  $\mathsf{R} = \bigoplus_{n=1}^\infty \mathsf{R}_n$ be the free
  restricted $\F_p$-Lie algebra on
  $\mathsf{x}_1, \ldots, \mathsf{x}_d$, and let
  $\mathsf{L} \le \mathsf{R}$ be the free Lie algebra on
  $\mathsf{x}_1, \ldots, \mathsf{x}_d$, as above.  For each
  $n \in \N$, with $n = p^j m$ and $p \nmid m$, the $\F_p$-space
  $\mathsf{R}_n$ is spanned by the linearly independent elements
  \begin{equation*}
    \mathsf{c}^{[p]^i}, \quad \text{where $\mathsf{c} \in
      \mathsf{L}_{n/p^i}$ is a basic commutator for $0 \le i \le j$;}
  \end{equation*}  
  In particular,
  $\dim_{\F_p}(\mathsf{R}_n) = \sum_{i=0}^{j} \dim_{\F_p}(\mathsf{L}_{n/p^i})$.
\end{Prop}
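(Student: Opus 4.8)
The plan is to obtain the basis of $\mathsf{R}$ from a PBW-type comparison inside the ambient algebra $\mathsf{A}=\F_p\langle\mathsf{x}_1,\ldots,\mathsf{x}_d\rangle$ and then to sort the resulting basis by degree. First I would record that $\mathsf{A}$ serves simultaneously as the universal enveloping algebra of the free Lie algebra $\mathsf{L}$ and as the restricted universal enveloping algebra of the free restricted Lie algebra $\mathsf{R}$; under these identifications $\mathsf{L}$ and $\mathsf{R}$ are the Lie, respectively restricted Lie, subalgebras of $\mathsf{A}$ generated by $\mathsf{x}_1,\ldots,\mathsf{x}_d$, and the $p$-map on $\mathsf{R}$ is realised by the $p$-th associative power, so that $\mathsf{c}^{[p]^i}=\mathsf{c}^{p^i}$ in $\mathsf{A}$. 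Fix the Hall basis $\{\mathsf{c}_\alpha\}$ of $\mathsf{L}$ consisting of basic commutators (already available from the discussion preceding Definition~\ref{Defi:basic_comm_Lie}), together with a compatible order.

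The heart of the matter is to prove that $\mathcal{B}=\{\mathsf{c}_\alpha^{[p]^i}\mid \alpha,\ i\ge 0\}$ is an $\F_p$-basis of $\mathsf{R}$. Linear independence is immediate from the ordinary PBW theorem for $U(\mathsf{L})=\mathsf{A}$: each $\mathsf{c}_\alpha^{[p]^i}=\mathsf{c}_\alpha^{p^i}$ is a distinct ordered PBW monomial in the $\mathsf{c}_\alpha$, and distinct monomials are $\F_p$-linearly independent in $\mathsf{A}$. For spanning I would show that $W=\mathrm{span}_{\F_p}\mathcal{B}$ is itself a restricted Lie subalgebra of $\mathsf{A}$; since $W\subseteq\mathsf{R}$ and $W$ contains the generators $\mathsf{x}_j$, this forces $\mathsf{R}\subseteq W$ and hence $W=\mathsf{R}$. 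Closure of $W$ under the bracket reduces, via the identity $[\mathsf{z},\mathsf{y}^{[p]}]=\mathsf{z}.\mathrm{ad}(\mathsf{y})^p$ of Definition~\ref{Defi:restrictedLie}(i) and its iterate $[\mathsf{z},\mathsf{y}^{[p]^k}]=\mathsf{z}.\mathrm{ad}(\mathsf{y})^{p^k}$, to showing that each $[\mathsf{c}_\alpha^{[p]^i},\mathsf{c}_\beta^{[p]^k}]$ is an iterated Lie bracket of $\mathsf{c}_\alpha$ and $\mathsf{c}_\beta$, hence lies in $\mathsf{L}\subseteq W$. Closure under the $p$-map follows from Definition~\ref{Defi:restrictedLie}(iii) together with $\lambda^p=\lambda$ for $\lambda\in\F_p$: applying the $p$-map to a linear combination $\sum\lambda_{\alpha,i}\mathsf{c}_\alpha^{[p]^i}$ produces $\sum\lambda_{\alpha,i}\mathsf{c}_\alpha^{[p]^{i+1}}$ plus a sum of Lie commutators of the summands, and the latter again lie in $\mathsf{L}\subseteq W$ by the bracket computation just described.

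Once $\mathcal{B}$ is known to be a basis, the grading assertion is routine bookkeeping. The grading on $\mathsf{A}$ assigns a basic commutator $\mathsf{c}\in\mathsf{L}$ the degree $\wt(\mathsf{c})$, and the $p$-map multiplies degrees by $p$, so $\mathsf{c}^{[p]^i}$ is homogeneous of degree $\wt(\mathsf{c})\,p^i$. Fixing $n\in\N$ and writing $n=p^jm$ with $p\nmid m$, the basis elements of $\mathcal{B}$ lying in $\mathsf{R}_n$ are precisely those $\mathsf{c}^{[p]^i}$ with $\wt(\mathsf{c})\,p^i=n$; this requires $p^i\mid n$, that is $0\le i\le j$, and then $\mathsf{c}$ runs over the basic commutators spanning $\mathsf{L}_{n/p^i}$. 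This yields both the spanning set stated for $\mathsf{R}_n$ and the dimension formula $\dim_{\F_p}(\mathsf{R}_n)=\sum_{i=0}^{j}\dim_{\F_p}(\mathsf{L}_{n/p^i})$.

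I expect the main obstacle to be the spanning step, i.e.\ verifying that $W$ is closed under the restricted operations: one must handle the $p$-map on arbitrary linear combinations, where the correction terms $s_i(\mathsf{x},\mathsf{y})$ of Definition~\ref{Defi:restrictedLie}(iii) appear, and check carefully that every such correction is an iterated bracket landing back in $\mathsf{L}$. The two enveloping-algebra identifications and the ordinary PBW theorem are standard, and the final grading argument is purely combinatorial; all of these ingredients are precisely what is packaged in \cite[\S 2~Thm.~1 and \S 2~Prop.~14]{Ba87}.
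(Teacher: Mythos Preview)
Your proof is correct. The paper does not give its own argument for this proposition; it simply cites \cite[\S 2~Thm.~1 and \S 2~Prop.~14]{Ba87}, and your sketch is a faithful reconstruction of the standard PBW-based argument packaged there.
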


Witt's formula gives
$\dim_{\F_p}(\mathsf{L}_n) = \tfrac{1}{n}\sum_{l \mid n}
\mu(l)d^{n/l}$,
where $\mu \colon \N \to \{0,1,-1\}$ denotes the classical M\"obius
function; see~\cite[Ex.~2 on p.~194]{Ja79} or \cite[\S 3
Thm.~1]{Ba87}.  This implies
\[
\dim_{\F_p}(\mathsf{R}_n) \\ = \frac{1}{p^j m} \sum_{l \mid m}
\mu(\nicefrac{m}{l})\left( d^{\, p^j l} + \sum\nolimits_{i=1}^j (p^i -
  p^{i-1}) \, d^{\, p^{j-i} l} \right),
\]
where $n = p^j m$ with $p \nmid m$ as in
Proposition~\ref{Prop:basis-restricted}.

We obtain simple, but significant consequences for the dimensions of
homogeneous components of $\mathsf{R}$ and~$\mathsf{L}$, including
their relation to one another; see~\cite[Lem.~4.3]{JZ08}.

\begin{Lem} \label{lem:densities_coincide_restricted_lie} In the setup
  described above,
  \begin{enumerate}[\rm (1)]
  \item $\dim_{\F_p}(\mathsf{R}_n) = \big( 1+o(1) \big) \frac{d^n}{n}$
    and $\dim_{\F_p}(\mathsf{L}_n) = \big( 1+o(1) \big) \frac{d^n}{n}$
    as $n \to \infty$;
  \item
    $\dim_{\F_p}(\mathsf{R} / \mathsf{R}_{>n}) =
    \big( 1 + o(1) \big) \frac{d^{n+1}}{n(d-1)}$
    and
    $\dim_{\F_p} (\mathsf{L} / \mathsf{L}_{> n}) =
    \big( 1 + o(1) \big) \frac{d^{n+1}}{n(d-1)}$ as $n \to \infty$.
  \end{enumerate}

  Consequently, for every graded Lie subalgebra
  $\mathsf{S} \le \mathsf{R}$ we have
  $\dens_\mathsf{R}(\mathsf{S}) = \dens_\mathsf{L}(\mathsf{S} \cap
  \mathsf{L})$.
\end{Lem}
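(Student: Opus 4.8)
The plan is to dispatch the two asymptotic statements and the concluding density identity in turn, the first two serving as bookkeeping for the third. For part~(1), I would start from Witt's formula $\dim_{\F_p}(\mathsf{L}_n) = \tfrac{1}{n}\sum_{l\mid n}\mu(l)d^{n/l}$ and isolate the term $l=1$, contributing $d^n/n$. The remaining terms are bounded in absolute value by $\sum_{l\mid n,\, l>1} d^{n/l} \le \sum_{k=1}^{\lfloor n/2\rfloor} d^k = O(d^{n/2})$, so $\dim_{\F_p}(\mathsf{L}_n) = (1+o(1))d^n/n$. For $\mathsf{R}_n$, writing $n=p^jm$ with $p\nmid m$, Proposition~\ref{Prop:basis-restricted} gives $\dim_{\F_p}(\mathsf{R}_n) = \sum_{i=0}^j \dim_{\F_p}(\mathsf{L}_{n/p^i})$; the $i=0$ summand is the just-established $(1+o(1))d^n/n$, while the remaining summands are each $O(d^{n/p})$ and hence jointly negligible, yielding $\dim_{\F_p}(\mathsf{R}_n) = (1+o(1))d^n/n$ as well.

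For part~(2), I would sum these estimates. The core computation is the geometric-tail asymptotic $\sum_{m=1}^n d^m/m = (1+o(1))\tfrac{d^{n+1}}{n(d-1)}$, which follows by substituting $k=n-m$ and observing that $\sum_{k\ge 0}\tfrac{n}{n-k}d^{-k}\to\sum_{k\ge 0}d^{-k} = \tfrac{d}{d-1}$ (dominated convergence on the range $k\le n/2$, with the tail $k>n/2$ contributing $O(n^2 d^{-n/2})\to 0$). To pass from $d^m/m$ to the actual dimensions, I would write $\dim_{\F_p}(\mathsf{L}_m) = (1+\varepsilon_m)d^m/m$ with $\varepsilon_m\to 0$ and note that, for any $\delta>0$, splitting the sum at an index beyond which $\lvert\varepsilon_m\rvert<\delta$ shows $\sum_{m\le n}\varepsilon_m d^m/m = o\big(\sum_{m\le n} d^m/m\big)$, since the partial sums grow geometrically. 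The identical argument applies to $\mathsf{R}$, establishing both halves of part~(2).

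For the concluding identity, the key structural input is that Proposition~\ref{Prop:basis-restricted} exhibits $\mathsf{R}_m = \mathsf{L}_m \oplus \mathsf{C}_m$, where $\mathsf{C}_m$ is spanned by the basis elements $\mathsf{c}^{[p]^i}$ with $i\ge 1$ and thus has dimension $\dim_{\F_p}(\mathsf{R}_m)-\dim_{\F_p}(\mathsf{L}_m) = O(d^{m/p})$. Since $\mathsf{S}$ is graded, $\mathsf{S}_m = \mathsf{S}\cap\mathsf{R}_m$ and $(\mathsf{S}\cap\mathsf{L})_m = \mathsf{S}\cap\mathsf{L}_m$. Applying the projection $\mathsf{R}_m\to\mathsf{C}_m$ along $\mathsf{L}_m$ to $\mathsf{S}_m$, whose kernel is exactly $\mathsf{S}\cap\mathsf{L}_m$, gives the sandwich
\[
\dim_{\F_p}(\mathsf{S}\cap\mathsf{L}_m) \le \dim_{\F_p}(\mathsf{S}_m) \le \dim_{\F_p}(\mathsf{S}\cap\mathsf{L}_m) + \dim_{\F_p}(\mathsf{C}_m).
\]
Summing over $m\le n$, the cumulative error $\sum_{m\le n}\dim_{\F_p}(\mathsf{C}_m) = O(d^{n/p})$ is negligible against $\sum_{m\le n}\dim_{\F_p}(\mathsf{R}_m) = (1+o(1))\tfrac{d^{n+1}}{n(d-1)}$; combined with the fact from part~(2) that the two denominators $\sum_{m\le n}\dim_{\F_p}(\mathsf{R}_m)$ and $\sum_{m\le n}\dim_{\F_p}(\mathsf{L}_m)$ are asymptotically equal, taking $\varliminf$ yields $\dens_\mathsf{R}(\mathsf{S}) = \dens_\mathsf{L}(\mathsf{S}\cap\mathsf{L})$.

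I expect the main obstacle to be bookkeeping rather than conceptual: namely, ensuring that the various $o(1)$ and $O(\cdot)$ error terms --- the subdominant Witt terms, the higher $p$-power contributions $\mathsf{C}_m$, and the accumulation of the relative errors $\varepsilon_m$ inside the geometrically weighted partial sums --- are all genuinely negligible against the dominant $d^n/n$ growth, and that they remain so after dividing by the denominators and passing to the lower limit.
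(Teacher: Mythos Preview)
Your proposal is correct and follows essentially the same route as the paper. The paper simply cites \cite[Lem.~4.3]{JZ08} for the estimates in (1) and~(2), whereas you spell them out from Witt's formula and Proposition~\ref{Prop:basis-restricted}; for the density identity, your sandwich $\dim_{\F_p}(\mathsf{S}\cap\mathsf{L}_m)\le\dim_{\F_p}(\mathsf{S}_m)\le\dim_{\F_p}(\mathsf{S}\cap\mathsf{L}_m)+\dim_{\F_p}(\mathsf{C}_m)$ with $\dim_{\F_p}(\mathsf{C}_m)=\dim_{\F_p}(\mathsf{R}_m)-\dim_{\F_p}(\mathsf{L}_m)$ is, after summation, exactly the inequality the paper writes down and uses in the same way.
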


\begin{proof}
  The estimates (1) and (2) are proved in~\cite[Lem.~4.3]{JZ08}.  We
  justify the conclusion for graded Lie
  subalgebras~$\mathsf{S} \le \mathsf{R}$.  For $m \in \N$ we write
  \[
  \mathsf{S}_m = \mathsf{S} \cap \mathsf{R}_m \qquad \text{and} \qquad
  (\mathsf{S} \cap \mathsf{L})_m = \mathsf{S} \cap \mathsf{L}_m =
  \mathsf{S}_m \cap \mathsf{L}.
  \]
  By~(2), we have
  \[
  \lim_{n\to \infty} \frac{\sum_{m=1}^n
    \dim_{\F_p}(\mathsf{R}_m)}{\sum_{m=1}^n\dim_{\F_p}(\mathsf{L}_m)}
  = 1.
  \]
  Clearly,  for $n \in \N$ the inequalities
  \[
  \sum_{m=1}^n \dim_{\F_p}(\mathsf{S}_m) - \sum_{m=1}^n
  (\dim_{\F_p}(\mathsf{R}_m) - \dim_{\F_p}(\mathsf{L}_m)) \le
  \sum_{m=1}^n \dim_{\F_p}((\mathsf{S} \cap \mathsf{L})_m) \le
  \sum_{m=1}^n \dim_{\F_p}(\mathsf{S}_m)
  \]
  hold.  Dividing by $\sum_{m=1}^n\dim_{\F_p}(\mathsf{R}_m)$ or,
  asymptotically equivalent, by
  $\sum_{m=1}^n\dim_{\F_p}(\mathsf{L}_m)$, and passing to the lower
  limit as $n \to \infty$, we obtain
  \[
  \dens_\mathsf{R}(\mathsf{S}) = \dens_\mathsf{R}(\mathsf{S}) - 0 \le
  \dens_\mathsf{L}(\mathsf{S} \cap \mathsf{L}) \le
  \dens_\mathsf{R}(\mathsf{S}). \qedhere
   \]
\end{proof}

The next lemma is elementary, but central for our constructions.

\begin{Lem} \label{lem:res-S-intersect-with-L} In the setup described
  above, let $\mathsf{M} \le \mathsf{L}$ be a Lie subalgebra.  Suppose
  that
  $\mathsf{M} = \langle \mathsf{y} \mid \mathsf{y} \in Y
  \rangle_{\F_p}$,
  as an $\F_p$-vector space, and let
  $\mathsf{S} = \langle \mathsf{M} \rangle_\mathrm{res.\, Lie} \le
  \mathsf{R}$
  be the restricted Lie subalgebra generated by~$\mathsf{M}$. Then we
  have
  \begin{equation}\label{equ:S-equlas-M-plus-Yspan}
    \mathsf{S} = \mathsf{M} + \langle \mathsf{y}^{[p]^j} \mid
    \mathsf{y} \in Y, j \in \N \rangle_{\F_p} 
    \qquad \text{and} \qquad \mathsf{S} \cap \mathsf{L} = \mathsf{M}.
  \end{equation}
  Moreover, if $\mathsf{M} \trianglelefteq \mathsf{L}$ is
    a Lie ideal, then
    $[\mathsf{S},\mathsf{R}] \subseteq \mathsf{M}$ and
    consequently $\mathsf{S} \trianglelefteq \mathsf{R}$ is a
    restricted Lie ideal.
\end{Lem}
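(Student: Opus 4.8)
The plan is to establish the two identities in~\eqref{equ:S-equlas-M-plus-Yspan} in turn. Writing $\mathsf{V} = \langle \mathsf{y}^{[p]^j} \mid \mathsf{y} \in Y,\, j \in \N \rangle_{\F_p}$ and $\mathsf{T} = \mathsf{M} + \mathsf{V}$, the inclusion $\mathsf{T} \subseteq \mathsf{S}$ is immediate, since $\mathsf{M} \subseteq \mathsf{S}$ and $\mathsf{S}$ is closed under the $p$-map. For the reverse inclusion I would show that $\mathsf{T}$ is itself a restricted Lie subalgebra containing $\mathsf{M}$, so that minimality of $\mathsf{S} = \langle \mathsf{M} \rangle_{\mathrm{res.\,Lie}}$ forces $\mathsf{S} \subseteq \mathsf{T}$; note that, once proved, this identity holds for any spanning set $Y$ and in particular exhibits $\mathsf{S}$ as independent of the choice of~$Y$. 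The engine for all bracket computations is the observation, proved by induction on $j$ from property~(i) of Definition~\ref{Defi:restrictedLie} (which gives $\mathrm{ad}(\mathsf{w}^{[p]}) = \mathrm{ad}(\mathsf{w})^p$), that for every $\mathsf{y} \in \mathsf{M}$ and $j \ge 0$ the operator $\mathrm{ad}(\mathsf{y}^{[p]^j})$ stabilises~$\mathsf{M}$; the base case $j = 0$ is just that $\mathsf{M}$ is a Lie subalgebra.

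Granting this, closure of $\mathsf{T}$ is verified in two steps. First, $[\mathsf{T},\mathsf{T}] \subseteq \mathsf{M}$: the brackets $[\mathsf{M},\mathsf{M}]$ and $[\mathsf{M},\mathsf{y}^{[p]^j}]$ lie in $\mathsf{M}$ directly by the sublemma, while a bracket $[\mathsf{y}^{[p]^i},\mathsf{z}^{[p]^j}]$ (with $\mathsf{y},\mathsf{z}\in Y$ and, say, $j \ge 1$) is rewritten via property~(i) as $\mathsf{y}^{[p]^i}.\mathrm{ad}(\mathsf{z}^{[p]^{j-1}})^p$; its first ad-step $[\mathsf{y}^{[p]^i},\mathsf{z}^{[p]^{j-1}}]$ lies in $\mathsf{M}$ by a secondary induction on $j$, the base $[\mathsf{y}^{[p]^i},\mathsf{z}] = -\,\mathsf{z}.\mathrm{ad}(\mathsf{y}^{[p]^i}) \in \mathsf{M}$ coming from the sublemma applied to~$\mathsf{y}$, and the remaining $p-1$ ad-steps preserve~$\mathsf{M}$ by the sublemma. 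Second, closure under the $p$-map: for $\mathsf{t} = \mathsf{m} + \mathsf{v} \in \mathsf{T}$ I expand $\mathsf{t}^{[p]}$ by property~(iii); the correction terms $s_i(\mathsf{m},\mathsf{v})$ are iterated brackets of weight $\ge 2$ and so lie in $[\mathsf{T},\mathsf{T}] \subseteq \mathsf{M}$, while expanding $\mathsf{m} = \sum_\mathsf{y} c_\mathsf{y}\mathsf{y}$ and $\mathsf{v} = \sum d_{\mathsf{y},j}\mathsf{y}^{[p]^j}$ the same way yields $\mathsf{m}^{[p]} \in \sum_\mathsf{y} c_\mathsf{y}\mathsf{y}^{[p]} + \mathsf{M} \subseteq \mathsf{T}$ and $\mathsf{v}^{[p]} \in \sum d_{\mathsf{y},j}\mathsf{y}^{[p]^{j+1}} + \mathsf{M} \subseteq \mathsf{T}$ (using $c^p = c$ for $c \in \F_p$). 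Hence $\mathsf{t}^{[p]} \in \mathsf{T}$, and the first identity follows.

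For the identity $\mathsf{S} \cap \mathsf{L} = \mathsf{M}$ I would pass to the associative realisation $\mathsf{L} \le \mathsf{R} \le \mathsf{A} = \F_p\langle \mathsf{x}_1,\ldots,\mathsf{x}_d\rangle$, in which $\mathsf{b}^{[p]^i} = \mathsf{b}^{p^i}$ is an honest power and $\mathsf{A} = U(\mathsf{L})$. Taking $Y$ to be a basis of $\mathsf{M}$ (legitimate, as the first identity holds for any spanning set) and extending it to an ordered basis $B$ of the free Lie algebra~$\mathsf{L}$, the Poincar\'e--Birkhoff--Witt theorem shows that the monomials $\mathsf{b}^{p^i}$ $(\mathsf{b} \in B,\, i \ge 0)$ are pairwise distinct PBW-monomials, whence $\{\mathsf{b}^{[p]^i} \mid \mathsf{b} \in B,\, i \ge 0\}$ is linearly independent in~$\mathsf{R}$. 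Given $\mathsf{s} \in \mathsf{S} \cap \mathsf{L}$, the first identity writes $\mathsf{s} = \sum_{\mathsf{y}\in Y} c_\mathsf{y}\mathsf{y} + \sum_{\mathsf{y}\in Y,\, i \ge 1} d_{\mathsf{y},i}\mathsf{y}^{[p]^i}$, whereas $\mathsf{s} \in \mathsf{L}$ expresses it using only the $i = 0$ members of~$B$; comparing coefficients within the independent family forces every $d_{\mathsf{y},i} = 0$, so $\mathsf{s} \in \mathsf{M}$. I expect this to be the main obstacle: since the $p$-map is not additive, the elements $\mathsf{y}^{[p]^j}$ carry genuine Lie ``tails'' and need not lie in the span of the $\mathsf{c}^{[p]^i}$ $(i \ge 1)$ from Proposition~\ref{Prop:basis-restricted}, so a naive decomposition $\mathsf{R} = \mathsf{L} \oplus \mathsf{P}$ with a projection argument fails; it is the PBW-independence of the powers $\mathsf{b}^{p^i}$ that rescues the argument.

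Finally, for the ideal case I would upgrade the sublemma using $\mathsf{M} \trianglelefteq \mathsf{L}$: now $\mathrm{ad}(\mathsf{z})$ stabilises $\mathsf{M}$ for every $\mathsf{z} \in \mathsf{L}$, so the same induction gives that $\mathrm{ad}(\mathsf{z}^{[p]^j})$ stabilises~$\mathsf{M}$; since $\mathsf{R}$ is spanned by the $\mathsf{c}^{[p]^j}$ with $\mathsf{c} \in \mathsf{L}$ (Proposition~\ref{Prop:basis-restricted}), this yields $[\mathsf{M},\mathsf{R}] \subseteq \mathsf{M}$. A parallel induction on $i$ then shows that for $\mathsf{y} \in \mathsf{M}$ and $i \ge 1$ the operator $\mathrm{ad}(\mathsf{y}^{[p]^i}) = \mathrm{ad}(\mathsf{y}^{[p]^{i-1}})^p$ maps all of $\mathsf{R}$ into $\mathsf{M}$ (its first factor sends $\mathsf{R}$ into $\mathsf{M}$ by $[\mathsf{M},\mathsf{R}] \subseteq \mathsf{M}$, and the remaining factors preserve $\mathsf{M}$), giving $[\mathsf{V},\mathsf{R}] \subseteq \mathsf{M}$. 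Combining, $[\mathsf{S},\mathsf{R}] = [\mathsf{M}+\mathsf{V},\mathsf{R}] \subseteq \mathsf{M}$, and since $\mathsf{S}$ is already a restricted subalgebra this makes $\mathsf{S} \trianglelefteq \mathsf{R}$ a restricted Lie ideal.
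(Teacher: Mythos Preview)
Your argument is correct, and considerably more explicit than the paper's.  For the first identity and the ideal assertion you spell out precisely what the paper dismisses as ``easily verified, using the defining properties of the $p$-map''; the sublemma that $\mathrm{ad}(\mathsf{y}^{[p]^j})$ stabilises $\mathsf{M}$ is indeed the key observation.

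For the second identity your route differs from the paper's.  The paper chooses a special $\F_p$-basis $\{\mathsf{y}_1,\mathsf{y}_2,\ldots\}$ of $\mathsf{M}$ in which each $\mathsf{y}_i$ has a distinguished \emph{leading} basic commutator $\mathsf{c}_i$ (the smallest one appearing in its expansion), arranged so that $\mathsf{c}_1 < \mathsf{c}_2 < \cdots$; it then works modulo $\mathsf{L}$ and argues, by tracking the terms $\mathsf{c}_i^{\,[p]^j}$ inside the basis of Proposition~\ref{Prop:basis-restricted}, that the images of the $\mathsf{y}_i^{\,[p]^j}$ in $\mathsf{R}/\mathsf{L}$ are linearly independent.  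This yields directly $\mathsf{V} \cap \mathsf{L} = 0$, hence $\mathsf{S} = \mathsf{M} \oplus \mathsf{V}$ and $\mathsf{S} \cap \mathsf{L} = \mathsf{M}$.  Your approach instead extends a basis $Y$ of $\mathsf{M}$ to an ordered basis $B$ of $\mathsf{L}$ and observes that the associative powers $\mathsf{b}^{\,p^i}$ are pairwise distinct PBW-monomials in $U(\mathsf{L}) = \mathsf{A}$; the independence you need then falls out with no further bookkeeping.  Your PBW route is arguably cleaner, since tailoring the basis to contain $Y$ makes the coefficient comparison immediate, while the paper's route has the virtue of staying entirely within the basic-commutator framework already in place.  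Your ``main obstacle'' paragraph slightly overstates the difficulty: the projection onto $\mathsf{R}/\mathsf{L}$ \emph{does} succeed, and is exactly what the paper uses, but it needs the leading-term device to see that the Lie tails you mention do not cause cancellation; your PBW argument sidesteps this entirely.
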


\begin{proof}
  The first equation is easily verified, using the defining properties
  of the $p$-map.  For the second equation, we may choose
  $Y = \{ \mathsf{y}_1, \mathsf{y}_2, \ldots \}$ to be an $\F_p$-basis
  for $\mathsf{M}$ in the following way.  Each basis element
  $\mathsf{y}_i$ can be written as a linear combination of basic
  commutators for $\mathsf{L}$; let $\mathsf{c}_i$ denote
  the smallest basic commutator appearing in this expression, without
  loss of generality with coefficient~$1$.  Arrange that these
  smallest terms form an ascending chain
  $\mathsf{c}_1 < \mathsf{c}_2 < \ldots$ of basic commutators.  Using
  Proposition~\ref{Prop:basis-restricted} and working
  modulo~$\mathsf{L}$, we see that the elements $\mathsf{y}^{[p]^j}$,
  for $\mathsf{y} \in Y$ and $j \in \N$, are linearly independent.
  Indeed, it is enough to monitor the appearance of the linearly
  independent elements $\mathsf{c}_i^{\, [p]^j}$ in the decompositions
  of the elements in question, modulo~$\mathsf{L}$. In particular, we
  obtain
  \[
  \mathsf{S} = \mathsf{M} \oplus \langle \mathsf{y}^{[p]^j} \mid
  \mathsf{y} \in Y, j \in \N \rangle_{\F_p} \qquad \text{and} \qquad
  \mathsf{S} \cap \mathsf{L} = \mathsf{M}. 
  \]
    Finally, suppose that
    $\mathsf{M} \trianglelefteq \mathsf{L}$ is a Lie ideal. The
    description of $\mathsf{S}$ in \eqref{equ:S-equlas-M-plus-Yspan}
    and the defining properties of the $p$-map imply that
    $[\mathsf{S},\mathsf{R}] \subseteq \mathsf{M}$.
 \qedhere
\end{proof}

From~\cite[Thm.~4.1]{NeScSh00} we obtain the following useful
consequence.

\begin{Prop} \label{prop:fg-graded-0} Let $\mathsf{L}$ be a free
  non-abelian $\F_p$-Lie algebra on finitely many generators.  If
  $\mathsf{M} \lneqq \mathsf{L}$ is a finitely generated proper graded
  Lie subalgebra, then $\dens_\mathsf{L}(\mathsf{M}) = 0$.
\end{Prop}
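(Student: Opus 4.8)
The plan is to deduce Proposition~\ref{prop:fg-graded-0} from the cited result \cite[Thm.~4.1]{NeScSh00}, which governs the growth of finitely generated proper graded subalgebras of a free non-abelian Lie algebra. The density $\dens_\mathsf{L}(\mathsf{M})$ is the lower limit of the ratio $\sum_{m=1}^n \dim_{\F_p}(\mathsf{M}_m) \big/ \sum_{m=1}^n \dim_{\F_p}(\mathsf{L}_m)$, so the task reduces to showing that the partial sums of the homogeneous dimensions of $\mathsf{M}$ are asymptotically negligible compared with those of~$\mathsf{L}$. By Lemma~\ref{lem:densities_coincide_restricted_lie}(1), the denominator grows like $d^{n+1}/(n(d-1))$, i.e.\ essentially like $d^n$ up to polynomial factors, so I would first identify what the Neumann--Scherk--Shmel'kin theorem says about $\dim_{\F_p}(\mathsf{M}_n)$, or about its partial sums, for a finitely generated proper graded subalgebra.

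The key input from \cite[Thm.~4.1]{NeScSh00} should be a rate-of-growth statement: a finitely generated proper graded subalgebra of a free Lie algebra grows strictly slower than the ambient algebra, the upshot being that its homogeneous components have dimension $\dim_{\F_p}(\mathsf{M}_n) = o(d^n/n)$, or equivalently that the cumulative dimensions satisfy $\sum_{m=1}^n \dim_{\F_p}(\mathsf{M}_m) = o(d^n)$. The mechanism is that a proper subalgebra can only ``capture'' a subexponential or strictly smaller-exponential fraction of the basic Lie commutators in each weight, since finitely many generators produce, by the bracketing structure, far fewer independent commutators than the full count $\tfrac{1}{n}\sum_{l \mid n}\mu(l)d^{n/l}$ afforded by the free algebra. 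Thus I would translate the theorem into a clean asymptotic bound on $\dim_{\F_p}(\mathsf{M}_n)$ and then sum.

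Once that bound is in hand, the computation is routine: dividing the numerator bound by the denominator estimate from Lemma~\ref{lem:densities_coincide_restricted_lie}(2) and passing to the limit as $n \to \infty$ forces the ratio to~$0$, whence $\varliminf$ is~$0$ as well and $\dens_\mathsf{L}(\mathsf{M}) = 0$.

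\textbf{Main obstacle.} The genuine difficulty is entirely at the interface with \cite[Thm.~4.1]{NeScSh00}: extracting from it precisely the asymptotic growth estimate in the form I need, and confirming that ``finitely generated proper graded subalgebra'' in the cited reference matches the hypothesis here (in particular that properness, rather than mere finite generation, is what drives the growth gap, and that the grading conventions agree). If the cited theorem is stated as a structural dichotomy rather than a quantitative growth bound, the work lies in deriving the growth estimate from the structure --- but this should reduce to comparing generating ranks across weights, after which the density computation is immediate.
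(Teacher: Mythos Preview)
Your proposal is correct and matches the paper's approach: the paper gives no proof at all beyond the sentence ``From~\cite[Thm.~4.1]{NeScSh00} we obtain the following useful consequence,'' so what you have written is precisely the elaboration the authors leave to the reader. One small slip: the reference~\cite{NeScSh00} is Newman--Schneider--Shalev (on the entropy of graded algebras), not ``Neumann--Scherk--Shmel'kin''; their Theorem~4.1 is indeed a quantitative growth statement (the entropy, i.e.\ the exponential growth rate, of a finitely generated proper graded subalgebra is strictly less than that of~$\mathsf{L}$), which combined with Lemma~\ref{lem:densities_coincide_restricted_lie} immediately gives density zero as you describe.
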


\begin{Ex} \label{Ex:fg-proper-but-1} We provide an explicit example
  of a finitely generated proper Lie subalgebra
  $\mathsf{M} \lneqq \mathsf{L}$ that is not graded and
  has density $\dens_\mathsf{L}(\mathsf{M}) = 1$; this should be
  contrasted with \cite[Thm.~1]{BaOl15}.

  Suppose that $\mathsf{L}$ is free on~$\{ \mathsf{x}, \mathsf{y} \}$.
  Then the Lie subalgebra
  $\mathsf{M} = \langle \mathsf{x} +[\mathsf{x},
  \mathsf{y}], \mathsf{y} \rangle_\mathrm{Lie}$
  does not contain $\mathsf{x}$ and is thus properly contained
  in~$\mathsf{L}$.  Indeed, $\mathsf{L}$ maps, via
  $\mathsf{x} \mapsto 1$ and $\mathsf{y} \mapsto D$, onto the split
  extension $\F_p D \rightthreetimes \F_p[t]$ of the abelian Lie
  algebra $\F_p[t]$ by the $1$-dimensional Lie algebra $\F_p D$
  spanned by the derivation $D \colon \F_p[t] \to \F_p[t]$,
  $f(t) \mapsto tf(t)$.  The image of $\mathsf{M}$ under this
  epimorphism is the proper subalgebra
  $\F_p D \rightthreetimes (1+t)\F_p[t]$.

  On the other hand, the graded Lie algebra $\mathsf{M}_\mathrm{grd}$,
  constructed in Definition~\ref{defi:density}, is equal
  to~$\mathsf{L}$; hence $\dens_\mathsf{L}(\mathsf{M}) =1$.
\end{Ex}

Proposition~\ref{prop:fg-graded-0} implies, in particular, that proper
graded Lie subalgebras of finite codimension in $\mathsf{L}$ are never
finitely generated.  It is now easy to manufacture, for any given
$\alpha \in (0,1)$, a graded Lie subalgebra
$\mathsf{M} \le \mathsf{L}$ such that
$\dens_\mathsf{L}(\mathsf{M}) = \alpha$: one builds
$\mathsf{M} = \bigcup_{i=1}^\infty \mathsf{M}(i)$ as the union of an
ascending chain $\mathsf{M}(i)$, $i \in \N$, of suitably chosen
finitely generated graded Lie subalgebras, in analogy to the
construction underlying~\cite[Thm.~5.4]{KlThZR19}.  Below we provide a
detailed argument, following closely an analogous argument for pro-$p$
groups, given in \cite[Proof of Prop.~5.2]{KlThZR19}.

The purpose is to prepare the transition to a corresponding result for
the free restricted Lie algebra~$\mathsf{R}$.  This transition is not
straightforward, because an analogue of the Schreier formula implies
that every restricted Lie subalgebra $\mathsf{S}$ of finite
codimension in~$\mathsf{R}$ is finitely generated and thus constitutes
a potential obstacle to an analogous construction;
see~\cite[Thm.~2]{Ku72} and also~\cite{BrKoSt05} for the analogue of
Schreier's formula.

\begin{Thm}\label{thm:all_densities_freelie}
  Let $\mathsf{L}$ be a non-abelian free $\F_p$-Lie algebra on
  finitely many generators.  Then there exists, for each
  $\alpha \in [0,1]$, a Lie subalgebra $\mathsf{M} \leq \mathsf{L}$
  that can be generated freely by a suitable collection of basic
  commutators for $\mathsf{L}$ and
  satisfies~$\dens_\mathsf{L}(\mathsf{M}) = \alpha$.  In particular,
  $\dspec(\mathsf{L}) = [0,1]$.
\end{Thm}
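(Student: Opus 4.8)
The plan is to settle the endpoints $\alpha\in\{0,1\}$ at once and, for $\alpha\in(0,1)$, to build $\mathsf{M}$ as an ascending union $\mathsf{M}=\bigcup_{i\ge 1}\mathsf{M}(i)$ of finitely generated graded Lie subalgebras, each arising from its predecessor by adjoining finitely many basic commutators; this is the Lie-theoretic transcription of the sawtooth construction in \cite[Proof of Prop.~5.2]{KlThZR19}. For $\alpha=1$ I take $\mathsf{M}=\mathsf{L}$, freely generated by the weight-one basic commutators $\mathsf{x}_1,\dots,\mathsf{x}_d$; for $\alpha=0$ the single generator $\langle\mathsf{x}_1\rangle_\mathrm{Lie}$ is finitely generated, graded and proper, hence has density $0$ by Proposition~\ref{prop:fg-graded-0}.

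For $\alpha\in(0,1)$ I fix a target $\beta=\tfrac{1+\alpha}{2}\in(\alpha,1)$ and choose the data recursively, writing $\rho(n)=\big(\sum_{m\le n}\dim_{\F_p}\mathsf{M}_m\big)\big/\big(\sum_{m\le n}\dim_{\F_p}\mathsf{L}_m\big)$ for the cumulative density ratio. At the $i$-th checkpoint $n_i$ I adjoin a set $B_i$ of basic commutators of weight $n_i$, chosen linearly independent modulo the homogeneous component $\mathsf{M}(i-1)_{n_i}$ and just numerous enough to raise $\rho(n_i)$ to $\beta$; since at the moment of the burst $\dim_{\F_p}\mathsf{M}(i-1)_{n_i}\ll\dim_{\F_p}\mathsf{L}_{n_i}$, there is ample room among the basic commutators of weight $n_i$ to do this. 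Each $\mathsf{M}(i)$ is finitely generated, graded and proper, so Proposition~\ref{prop:fg-graded-0} forces $\dens_\mathsf{L}(\mathsf{M}(i))=0$; hence during the ensuing rest (no new generators) the ratio $\rho$ eventually drops below $\alpha$, and I let $n_{i+1}$ be the first weight at which this occurs. Because every later generator lies in strictly higher degree, $\mathsf{M}_n=\mathsf{M}(i)_n$ for $n<n_{i+1}$, so $\rho\ge\alpha$ throughout each rest and returns to $\approx\alpha$ at every checkpoint; thus $\dens_\mathsf{L}(\mathsf{M})=\varliminf_{n}\rho(n)=\alpha$.

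It remains to verify that $Y=\bigcup_i B_i$ freely generates $\mathsf{M}$. By the Shirshov--Witt theorem every subalgebra of a free Lie algebra is free, so $\mathsf{M}$ is free and it suffices to show that $Y$ maps to a basis of the abelianisation $\mathsf{M}/[\mathsf{M},\mathsf{M}]$. As the weights $n_i$ are strictly increasing, the part of $\mathsf{M}$ of degree $<n_i$ lies in $\mathsf{M}(i-1)$, whence $[\mathsf{M},\mathsf{M}]_{n_i}=[\mathsf{M}(i-1),\mathsf{M}(i-1)]_{n_i}\subseteq\mathsf{M}(i-1)_{n_i}$; the chosen independence of $B_i$ modulo $\mathsf{M}(i-1)_{n_i}$ then shows that no nontrivial linear combination of the elements of $B_i$ lies in $[\mathsf{M},\mathsf{M}]$. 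Working degree by degree, $Y$ is linearly independent in $\mathsf{M}/[\mathsf{M},\mathsf{M}]$ and therefore a free generating set consisting of basic commutators, as required. The assertion $\dspec(\mathsf{L})=[0,1]$ is then immediate.

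The main obstacle is to guarantee that the \emph{lower} limit equals $\alpha$ rather than some smaller value: a priori $\rho$ could overshoot downward at a crossing, since a single new weight contributes a non-negligible proportion of the denominator. The remedy is to arrange that the growth rates $r_i$ of the finitely generated $\mathsf{M}(i)$ satisfy $r_i\uparrow d$, equivalently that successive bursts push $\sum_i\lvert B_i\rvert\,d^{-n_i}$ up to~$1$; then, using the asymptotics $\dim_{\F_p}\mathsf{L}_n=\big(1+o(1)\big)d^n/n$ from Lemma~\ref{lem:densities_coincide_restricted_lie}, the incremental ratios $\dim_{\F_p}\mathsf{M}_n/\dim_{\F_p}\mathsf{L}_n$ decay only slowly across each rest, so every crossing of the level $\alpha$ is smooth and $\rho(n_{i+1})\to\alpha$. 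Coordinating the burst sizes $\lvert B_i\rvert$ with the rest-lengths $n_{i+1}-n_i$ so that this smoothness and the normalisation $\sum_i\lvert B_i\rvert\,d^{-n_i}=1$ hold simultaneously is the technical heart of the argument; it is carried out exactly as in the pro-$p$ treatment in \cite[Proof of Prop.~5.2 and Thm.~5.4]{KlThZR19}.
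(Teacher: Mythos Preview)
Your overall strategy matches the paper's: build $\mathsf{M}$ as an ascending union of finitely generated graded subalgebras generated by basic commutators, use Proposition~\ref{prop:fg-graded-0} to force each stage to density~$0$, and extract a free generating set via Shirshov--Witt. The divergence is in the bookkeeping, and there the paper's route is both simpler and more complete.

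The paper works weight by weight. At each $k$ it adjoins just enough basic commutators of weight~$k$ (possibly none) to keep the partial ratio in the window $[\alpha-1/l(k),\,\alpha]$, where $l(k)=\dim_{\F_p}(\mathsf{L}/\mathsf{L}_{>k})$. Since a single basic commutator of weight $k$ shifts the ratio by exactly $1/l(k)$, this window is always attainable, and one gets $\rho(n)\ge\alpha-1/l(n)$ for all~$n$ together with $\rho(n)\le\alpha$ infinitely often, whence $\varliminf\rho(n)=\alpha$ with no further analysis. Your burst-and-wait scheme, by contrast, creates precisely the undershoot problem you flag: between consecutive weights the cumulative ratio contracts by roughly a factor $r_i/d$, so the last value before a crossing may sit anywhere in $[\alpha,(d/r_i)\alpha)$, and your assertion in the second paragraph that $\rho$ ``returns to $\approx\alpha$ at every checkpoint'' is not yet justified at that point.

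Your proposed remedy ($r_i\uparrow d$, equivalently $\sum_i\lvert B_i\rvert\,d^{-n_i}\to 1$) would make crossings smooth, but you do not carry out the coordination; you defer it to \cite[Prop.~5.2 and Thm.~5.4]{KlThZR19}. Note, however, that the paper explicitly says its own proof follows \cite[Proof of Prop.~5.2]{KlThZR19}, and that proof is the weight-by-weight construction, not a burst-and-wait with growth-rate tuning. So your citation does not supply the missing piece, and simultaneously achieving the target $\beta$ at each burst \emph{and} the normalisation on $\sum_i\lvert B_i\rvert\,d^{-n_i}$ is a genuine constraint you have not resolved. This is a gap, not a fatal one, but the paper's approach simply sidesteps it.

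On freeness, the paper just remarks that any generating set of basic commutators contains an irredundant subset which is then free (citing Shirshov); your degree-by-degree verification that $Y$ is independent modulo $[\mathsf{M},\mathsf{M}]$ is an explicit check of the same irredundancy and is fine.
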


\begin{proof}
  Recall that $\mathsf{L}$ is a free Lie algebra on generators
  $\mathsf{x}_1, \ldots, \mathsf{x}_d$, for $d \in \N$ with $d \ge 2$.
  Clearly, the trivial subalgebra $\{0\}$ and $\mathsf{L}$ have
  densities $0$ and $1$, respectively.

  Suppose now that $\alpha\in (0,1)$.  Observe that the sequence
  \[
  l(n) =\dim_{\F_p} (\mathsf{L} / \mathsf{L}_{>n}) = \sum\nolimits_{i=1}^n
  \dim_{\F_p}(\mathsf{L}_i), \quad n \in \N,
  \]
  is strictly increasing.  It suffices to produce a Lie subalgebra
  $\mathsf{M} = \bigoplus_i \mathsf{M}_i \leq \mathsf{L}$ which is
  generated by basic commutators such that
  \begin{enumerate}[\rm (i)]
  \item
    $\alpha- \nicefrac{1}{l(n)} \leq \tfrac{1}{l(n)} \sum_{i=1}^n
    \dim_{\F_p} (\mathsf{M}_i)$ for all $n \in \N$; and
  \item
    $\frac{1}{l(n)} \sum_{i=1}^n \dim_{\F_p} (\mathsf{M}_i) \leq
    \alpha$ for infinitely many $n \in \N$.
  \end{enumerate}
  Indeed, we can replace any generating set for $\mathsf{M}$
  consisting of basic commutators by an irredundant subset that is a
  free generating set for~$\mathsf{M}$; see~\cite[Proof of
  Thm.~2]{Sh09} and compare the introductory comments
  in~\cite{BrKoSt05}.

  We construct a Lie subalgebra $\mathsf{M}$ satisfying (i) and (ii)
  inductively as the union
  $\mathsf{M} = \bigcup_{k=1}^\infty \mathsf{M}(k)$ of an ascending
  chain of Lie subalgebras
  $\mathsf{M}(1) \subseteq \mathsf{M}(2) \subseteq \ldots$, where each
  term $\mathsf{M}(k) = \langle Y_k \rangle_\mathrm{Lie}$ is generated
  by a finite set $Y_k$ of basic commutators of weight at most $k$ and
  $Y_1 \subseteq Y_2 \subseteq \ldots$.

  Let $k \in \N$.  For $k=1$, choose $a \in \{0,1,\ldots, d-1\}$ such
  that $\alpha - \nicefrac{1}{d} \leq \nicefrac{a}{d} \leq \alpha$ and
  let $\mathsf{M}(1)$ denote the Lie subalgebra generated by
  $Y_1 = \{\mathsf{x}_1,\ldots, \mathsf{x}_a \}$.  Observe that
  $\mathsf{M}(1)$ is a proper finitely generated graded Lie subalgebra
  of~$\mathsf{L}$.

  Now suppose that $k \ge 2$.  Suppose further that we have already
  constructed a proper finitely generated Lie subalgebra
  $\mathsf{M}(k-1) = \langle Y_{k-1} \rangle_\mathrm{Lie} \le
  \mathsf{L}$
  which is generated by a finite set of basic commutators of weight at
  most $k-1$, and that the inequalities in (i) hold for
  $\mathsf{M}(k-1)$ in place of~$\mathsf{M}$ and $1 \le n \le k-1$.
  For $n \ge k$, consider
  \[
  \beta_n = \frac{\sum_{i=1}^n \dim_{\F_p} (\mathsf{M}(k-1)_i)}{l(n)}.
  \]
  
  First suppose that $\beta_k \le \alpha$.  Since
  \[
  \frac{\sum_{i=1}^{k-1} \dim_{\F_p} (\mathsf{M}(k-1)_i) +
    \dim_{\F_p}(\mathsf{L}_k)}{l(k)} \ge \left( \alpha -
    \tfrac{1}{l(k-1)} \right) \tfrac{l(k-1)}{l(k)} + \tfrac{l(k)-
    l(k-1)}{l(k)} \ge \alpha- \tfrac{1}{l(k)},
  \]
  we find a finite set
  $Z_k \subseteq \mathsf{L}_k \smallsetminus \mathsf{M}(k-1)_k$,
  consisting of basic commutators of weight $k$, such that
  \[
  \alpha- \tfrac{1}{l(k)} \leq \frac{\sum_{i=1}^{k-1} \dim_{\F_p}
    (\mathsf{M}(k-1)_i) + \dim_{\F_p} (\mathsf{M}(k-1)_k \oplus
    \langle Z_k\rangle_{\F_p})}{l(k)}\leq \alpha.
  \]
  Let $\mathsf{M}(k)$ be the Lie subalgebra generated by
  $Y_k = Y_{k-1} \cup Z_k$.  Observe that $\mathsf{M}(k)$ is a proper
  finitely generated graded Lie subalgebra of~$\mathsf{L}$ and that
  the inequality in~(i), respectively~(ii), holds for $\mathsf{M}(k)$
  in place of $\mathsf{M}$ and $1 \le n \le k$, respectively $n=k$.

  Now suppose that $\beta_k > \alpha$.  By
  Proposition~\ref{prop:fg-graded-0}, we find a minimal $k_0 \geq k+1$
  such that $\beta_{k_0} \le \alpha$.  Putting
  $\mathsf{M}(k) = \ldots = \mathsf{M}(k_0-1)$ and
  $Y_k = \ldots = Y_{k_0-1}$, we return to the previous case for
  $k_0>k$ in place of~$k$.
\end{proof}

The proof of Theorem~\ref{thm:all_densities_freelie} does not simply
carry over, mutatis mutandis, to free restricted Lie algebras, because
Proposition~\ref{prop:fg-graded-0} is lacking a direct analogue, as
explained before.  Nevertheless, we can deduce the analogous result as
follows.

\begin{Cor}\label{cor:all_densities_restricted}
  Let $\mathsf{R}$ be a non-abelian free restricted $\F_p$-Lie algebra
  on finitely many generators.  Then there exists, for each
  $\alpha\in[0,1]$, a graded restricted Lie subalgebra
  $\mathsf{S} \le \mathsf{R}$ that can be generated by basic
  commutators and satisfies $\dens_\mathsf{R}(\mathsf{S}) = \alpha$.
  In particular, $\dspec(\mathsf{R}) = [0,1]$.
\end{Cor}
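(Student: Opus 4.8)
The plan is to obtain the desired restricted subalgebra of $\mathsf{R}$ not by a direct construction inside $\mathsf{R}$ — which is obstructed, as flagged before the statement, by the Schreier-type formula forcing every finite-codimension restricted subalgebra of $\mathsf{R}$ to be finitely generated — but by forming the restricted Lie closure of a suitable graded Lie subalgebra of $\mathsf{L}$ and checking that passing to this closure preserves density. The point is that all the real work has already been done in the free Lie algebra $\mathsf{L}$, where Proposition~\ref{prop:fg-graded-0} is available; Lemmas~\ref{lem:densities_coincide_restricted_lie} and~\ref{lem:res-S-intersect-with-L} then transport the result to $\mathsf{R}$.

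Concretely, fix $\alpha \in [0,1]$. By Theorem~\ref{thm:all_densities_freelie} there is a Lie subalgebra $\mathsf{M} \le \mathsf{L}$, freely generated by a collection of basic commutators, with $\dens_\mathsf{L}(\mathsf{M}) = \alpha$; being generated by homogeneous elements, $\mathsf{M} = \bigoplus_n (\mathsf{M} \cap \mathsf{L}_n)$ is graded. I would then set $\mathsf{S} = \langle \mathsf{M} \rangle_\mathrm{res.\,Lie} \le \mathsf{R}$. Since the same basic commutators generate $\mathsf{S}$ as a restricted Lie subalgebra, the requirement that $\mathsf{S}$ be generated by basic commutators is met automatically.

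Two verifications remain: that $\mathsf{S}$ is graded and that $\dens_\mathsf{R}(\mathsf{S}) = \alpha$. For gradedness, choose an $\F_p$-basis $Y$ of $\mathsf{M}$ consisting of homogeneous elements. Lemma~\ref{lem:res-S-intersect-with-L} gives $\mathsf{S} = \mathsf{M} + \langle \mathsf{y}^{[p]^j} \mid \mathsf{y} \in Y,\, j \in \N \rangle_{\F_p}$, and each $\mathsf{y}^{[p]^j}$ is homogeneous of degree $p^j \wt(\mathsf{y})$ because $\mathsf{R}_n^{\,[p]} \subseteq \mathsf{R}_{pn}$; hence $\mathsf{S}$ is spanned by homogeneous elements and is graded. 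For the density, the same lemma yields $\mathsf{S} \cap \mathsf{L} = \mathsf{M}$, so Lemma~\ref{lem:densities_coincide_restricted_lie}, applied to the graded subalgebra $\mathsf{S}$, gives
\[
\dens_\mathsf{R}(\mathsf{S}) = \dens_\mathsf{L}(\mathsf{S} \cap \mathsf{L}) = \dens_\mathsf{L}(\mathsf{M}) = \alpha.
\]
The endpoints are immediate: $\mathsf{S} = \{0\}$ has density $0$ and $\mathsf{S} = \mathsf{R}$ has density $1$.

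As $\alpha \in [0,1]$ was arbitrary, this establishes $\dspec(\mathsf{R}) = [0,1]$. I expect no genuine obstacle here: the conceptual content is entirely carried by the interplay of the two lemmas — the first lets density be computed after intersecting with $\mathsf{L}$, the second shows that intersection recovers $\mathsf{M}$ exactly, so the restricted closure contributes only $p$-power elements lying above $\mathsf{L}$ that leave the density untouched. The one place warranting care is choosing $Y$ homogeneous, so that the gradedness of $\mathsf{S}$ (needed to invoke Lemma~\ref{lem:densities_coincide_restricted_lie}) is manifest rather than something to be argued separately.
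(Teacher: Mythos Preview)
Your proof is correct and follows essentially the same approach as the paper: invoke Theorem~\ref{thm:all_densities_freelie} to obtain $\mathsf{M}\le\mathsf{L}$, set $\mathsf{S}=\langle\mathsf{M}\rangle_{\mathrm{res.\,Lie}}$, and apply Lemmas~\ref{lem:res-S-intersect-with-L} and~\ref{lem:densities_coincide_restricted_lie} to conclude $\dens_\mathsf{R}(\mathsf{S})=\dens_\mathsf{L}(\mathsf{M})=\alpha$. Your explicit verification that $\mathsf{S}$ is graded (via a homogeneous basis~$Y$) is a nice touch, since Lemma~\ref{lem:densities_coincide_restricted_lie} is stated only for graded subalgebras; the paper leaves this implicit.
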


\begin{proof}
  For $\alpha \in [0,1]$, Theorem~\ref{thm:all_densities_freelie}
  yields a Lie subalgebra $\mathsf{M} \leq \mathsf{L}$, generated by
  basic Lie commutators and such that
  $\dens_\mathsf{L}(\mathsf{M}) = \alpha$.  Let $\mathsf{S} = \langle
  \mathsf{M} \rangle_\mathrm{res.\,Lie}$ be the
  restricted Lie subalgebra of $\mathsf{R}$ that is generated by
  $\mathsf{M}$.  By Lemma~\ref{lem:res-S-intersect-with-L}, we have
  $\mathsf{S} \cap \mathsf{L} = \mathsf{M}$.  Hence
  Lemma~\ref{lem:densities_coincide_restricted_lie} implies that
  $\dens_\mathsf{R}(\mathsf{S}) = \dens_\mathsf{L}(\mathsf{M}) =
  \alpha$.
\end{proof}

It is interesting to give an alternative, more direct
proof of Corollary~\ref{cor:all_densities_restricted}, which we
proceed to do now.
 
\begin{Prop} \label{prop:subideals_density_1} If $\mathsf{S}$ is a
  non-zero restricted Lie subideal of the free restricted
  $\mathbb{F}_p$-Lie algebra~$\mathsf{R}$ then
  $\dens_\mathsf{R}(\mathsf{S}) = 1$.
\end{Prop}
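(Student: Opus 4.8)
The plan is to pull the problem back to the free Lie algebra $\mathsf{L} \le \mathsf{R}$ and there upgrade the cited density-$1$ statement for non-zero \emph{ideals} to non-zero \emph{subideals}. First I would record the reduction. By Definition~\ref{defi:density} we have $\dens_\mathsf{R}(\mathsf{S}) = \dens_\mathsf{R}(\mathsf{S}_\mathrm{grd})$, and since densities never exceed~$1$ it suffices to bound $\dens_\mathsf{R}(\mathsf{S})$ below by~$1$. Intersecting the defining subideal chain $\mathsf{R}\trianglerighteq\mathsf{S}_1\trianglerighteq\ldots\trianglerighteq\mathsf{S}_n=\mathsf{S}$ term by term with $\mathsf{L}$ shows that $\mathsf{S}\cap\mathsf{L}$ is a subideal of $\mathsf{L}$, because $[\mathsf{S}_k\cap\mathsf{L},\mathsf{S}_{k-1}\cap\mathsf{L}]\subseteq[\mathsf{S}_k,\mathsf{S}_{k-1}]\cap\mathsf{L}\subseteq\mathsf{S}_k\cap\mathsf{L}$. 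Comparing top homogeneous components gives $(\mathsf{S}\cap\mathsf{L})_\mathrm{grd}\subseteq\mathsf{S}_\mathrm{grd}\cap\mathsf{L}$, so Lemma~\ref{lem:densities_coincide_restricted_lie} yields $\dens_\mathsf{R}(\mathsf{S})=\dens_\mathsf{L}(\mathsf{S}_\mathrm{grd}\cap\mathsf{L})\ge\dens_\mathsf{L}(\mathsf{S}\cap\mathsf{L})$. Everything thus reduces to showing that the \emph{non-zero} subideal $\mathsf{S}\cap\mathsf{L}$ of $\mathsf{L}$ has density~$1$.

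Next I would verify $\mathsf{S}\cap\mathsf{L}\neq 0$, descending the chain from the top. The structural input is that $\mathsf{R}$ embeds into the free associative algebra $\mathsf{A}=\F_p\langle\mathsf{x}_1,\ldots,\mathsf{x}_d\rangle$, which is a domain; consequently the centralizer in $\mathsf{R}$ of any infinite-dimensional non-zero subideal of $\mathsf{L}$ (such a subideal contains two elements that do not commute in $\mathsf{A}$) reduces to the scalars, hence is trivial in positive degrees. Moreover property~(i) of the $p$-map gives $[\ell,\mathsf{c}^{[p]^i}]=\ell.\mathrm{ad}(\mathsf{c})^{p^i}\in\mathsf{L}$, so bracketing an arbitrary homogeneous element of $\mathsf{R}$ against a Lie element always lands in~$\mathsf{L}$. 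Starting from the non-zero ideal $\mathsf{S}_1\cap\mathsf{L}$ (non-zero by this same bracketing, as $\mathsf{S}_1\trianglelefteq\mathsf{R}$) one shows inductively that for a suitable homogeneous $0\neq\mathsf{s}\in\mathsf{S}_k$ the set $[\,\mathsf{s},\mathsf{S}_{k-1}\cap\mathsf{L}\,]\subseteq\mathsf{S}_k\cap\mathsf{L}$ is non-zero; hence $\mathsf{S}_k\cap\mathsf{L}\neq 0$ for every $k$, and in particular $\mathsf{S}\cap\mathsf{L}\neq 0$.

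It then remains to prove the core assertion: a non-zero subideal $\mathsf{M}$ of the finitely generated free Lie algebra $\mathsf{L}$ has $\dens_\mathsf{L}(\mathsf{M})=1$. I would argue by induction on the defect. For defect~$1$, $\mathsf{M}$ is a non-zero ideal and the claim is the cited consequence of~\cite{BaOl15,NeScSh00}. For larger defect, write $\mathsf{M}\trianglelefteq\mathsf{M}_1$ with $\mathsf{M}_1$ a non-zero ideal of~$\mathsf{L}$; by the Shirshov--Witt theorem $\mathsf{M}_1$ is itself free and has $\dens_\mathsf{L}(\mathsf{M}_1)=1$, while $\mathsf{M}$ is a non-zero subideal of $\mathsf{M}_1$ of smaller defect. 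Applying the density-$1$ statement for ideals inside $\mathsf{M}_1$ and combining the resulting densities along the chain gives $\dens_\mathsf{L}(\mathsf{M})=1$, whence $\dens_\mathsf{R}(\mathsf{S})=1$.

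The main obstacle is exactly this last step. The free subalgebras $\mathsf{M}_1$ arising in the induction are in general of \emph{infinite} rank (carrying the grading inherited from~$\mathsf{L}$, so that each homogeneous component stays finite dimensional). One therefore needs the cited density-$1$ result for ideals in the generality of graded free Lie algebras on countably many homogeneous generators, together with a careful argument that relative density~$1$ in $\mathsf{M}_1$ combines with $\dens_\mathsf{L}(\mathsf{M}_1)=1$ to yield $\dens_\mathsf{L}(\mathsf{M})=1$. I expect that comparing the Hilbert--Poincar\'e series of $\mathsf{M}_1$ and of the quotient $\mathsf{M}_1/\mathsf{M}$ is what makes this transition quantitative, since the exponential growth rate of a free Lie algebra is governed by the singularity of (one minus) the generating series of its generators.
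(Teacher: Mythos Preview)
Your reduction to the free Lie algebra $\mathsf{L}$ via Lemma~\ref{lem:densities_coincide_restricted_lie} is exactly what the paper does, and your care in checking that $\mathsf{S}\cap\mathsf{L}$ is a non-zero subideal of $\mathsf{L}$ is more explicit than the paper's one-line assertion. So far so good.

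The divergence is in the last step, and it stems from a misreading of the literature. You assume that the cited density-$1$ result from~\cite{BaOl15} applies only to non-zero \emph{ideals} and then try to bootstrap it to subideals by induction on the defect. This is where your obstacle appears: the intermediate terms $\mathsf{M}_1$ are free of infinite rank, and you would need both the density-$1$ theorem in that generality and a transitivity argument for densities along the chain. You correctly flag this as unfinished, and indeed it would take real work to close.

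The paper sidesteps all of this by citing \cite[Thm.~2]{BaOl15} directly for \emph{subideals}: Bahturin and Olshanskii already prove that any non-zero subideal of a finitely generated free Lie algebra has density~$1$ (the title of their paper is ``Growth of subalgebras and subideals in free Lie algebras''). Once $\mathsf{S}$ is replaced by $\mathsf{S}_\mathrm{grd}$ and intersected with $\mathsf{L}$, the result is a non-zero graded subideal of $\mathsf{L}$, and the cited theorem together with Lemma~\ref{lem:densities_coincide_restricted_lie} finishes the proof in one line. So your inductive programme, and the Hilbert--Poincar\'e comparison you anticipate needing, are unnecessary: the heavy lifting is already done in the reference.
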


\begin{proof}
  As
  $\dens_\mathsf{R}(\mathsf{S}) =
  \dens_\mathsf{R}(\mathsf{S}_\mathrm{grd})$,
  we may assume that $\mathsf{S}$ is graded.  Then
  $\mathsf{M} = \mathsf{S} \cap \mathsf{L}$ is a non-zero graded
  subideal of the free Lie algebra~$\mathsf{L}$.  By
  Lemma~\ref{lem:densities_coincide_restricted_lie} and
  \cite[Thm.~2]{BaOl15}, we have
  $\dens_\mathsf{R}(\mathsf{S}) = \dens_\mathsf{L}(\mathsf{M}) = 1$.
\end{proof}

\begin{Ex}
  We provide an explicit example of a finitely generated restricted
  Lie subalgebra $\mathsf{S} \le \mathsf{R}$ such that the Lie algebra
  $\mathsf{S} \cap \mathsf{L}$ is \emph{not} finitely generated.

  Indeed, suppose that $\mathsf{R}$ is free
  on~$\{ \mathsf{x}, \mathsf{y} \}$ and $\mathsf{L} \le \mathsf{R}$,
  as before; i.e., $d=2$ and $\mathsf{x} = \mathsf{x}_1$,
    $\mathsf{y} = \mathsf{x}_2$. Then the restricted Lie subalgebra
    $\mathsf{S} = \langle \mathsf{x}, \mathsf{y}^{[p]}
    \rangle_\mathrm{res.\, Lie} \le \mathsf{R}$
    induces the Lie subalgebra
  \[
  \mathsf{S} \cap \mathsf{L} =\langle [\mathsf{x}, \mathsf{y},
  \overset{ip}{\ldots}, \mathsf{y}] \mid i \geq 0
  \rangle_\mathrm{Lie} \le \mathsf{L}
  \]
  which is not finitely generated. Observe that
  $\mathsf{L} = \langle \mathsf{x}, \mathsf{y} \rangle_\mathrm{Lie}$
  maps onto the split extension $\F_p D \rightthreetimes \F_p[t]$ as
  described in Example~\ref{Ex:fg-proper-but-1}.  The image of
  $\mathsf{S} \cap \mathsf{L}$ under this epimorphism is the infinite
  dimensional abelian Lie algebra $\F_p[t^p]$.
\end{Ex}

The following proposition can be seen as a strong analogue of
Proposition~\ref{prop:fg-graded-0}.

\begin{Prop}\label{prop:finitely_gen_subalg_density_0}
  Let $\mathsf{S} \le \mathsf{R}$ be a finitely generated restricted
  Lie subalgebra of the free restricted $\mathbb{F}_p$-Lie algebra
  $\mathsf{R}$ such that
  $\dim_{\F_p}(\mathsf{R}/\mathsf{S}) = \infty$.  Then $\mathsf{S}$
  has density $\dens_\mathsf{R}(\mathsf{S}) = 0$.
\end{Prop}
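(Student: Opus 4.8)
The plan is to reduce everything to homogeneous data and then exploit a Schreier-type generating-function identity for free restricted Lie algebras. Since the density of $\mathsf{S}$ depends only on the associated graded subalgebra, $\dens_\mathsf{R}(\mathsf{S}) = \dens_\mathsf{R}(\mathsf{S}_\mathrm{grd})$, and since the codimension is likewise unchanged, I would replace $\mathsf{S}$ by $\mathsf{T} := \mathsf{S}_\mathrm{grd}$ and assume from the outset that the subalgebra is graded, still of infinite codimension. The advantage of working with $\mathsf{T}$ is that, by the restricted analogue of the Shirshov--Witt theorem (Kukin~\cite{Ku72}, compare~\cite{BrKoSt05}), $\mathsf{T}$ is itself a free restricted Lie algebra, now on a set of \emph{homogeneous} free generators; write $g(t) = \sum_n r_n t^n$ for the corresponding generator series, where $r_n$ counts these generators in degree~$n$.

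Next I would pass to restricted universal enveloping algebras inside the free associative algebra $\mathsf{A} = \F_p\langle \mathsf{x}_1,\ldots,\mathsf{x}_d \rangle$, which is the enveloping algebra of $\mathsf{R}$ and has $\dim_{\F_p}(\mathsf{A}_n) = d^n$. By the restricted PBW theorem the associative subalgebra $u(\mathsf{T})$ generated by $\mathsf{T}$ is free associative on the chosen homogeneous generators, with Hilbert series $1/(1-g(t))$, and $\mathsf{A}$ is a free $u(\mathsf{T})$-module. Recording a homogeneous module basis by degree through a series $C(t) = \sum_n c_n t^n$ with $c_n \in \N_0$, freeness of the module yields the Schreier-type identity
\[
\frac{1}{1-dt} = \frac{C(t)}{1-g(t)}, \qquad \text{that is,} \qquad 1 - g(t) = (1-dt)\,C(t).
\]
Here the module generators are indexed by the restricted PBW monomials built from a complement of $\mathsf{T}$ in $\mathsf{R}$, so $\sum_n c_n < \infty$ exactly when this complement is finite dimensional, i.e.\ exactly when $\mathsf{T}$ has finite codimension in~$\mathsf{R}$.

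The conclusion then follows by a short positivity argument. Finite generation of $\mathsf{T}$ makes $g$ a polynomial with non-negative coefficients and $g(0)=0$, hence strictly increasing on $[0,\infty)$; since $C$ also has non-negative coefficients, evaluating the identity on $[0,1/d)$ forces $g(t)\le 1$ there, so $g(1/d)\le 1$. If one had $g(1/d)=1$, then $1/d$ would be a root of $1-g$, so $(1-dt)$ would divide $1-g$, making $C$ a polynomial and $\mathsf{T}$ of finite codimension, against our hypothesis. Thus $g(1/d)<1$, and the least positive root $t_0$ of $g(t)=1$ satisfies $t_0>1/d$; consequently $u(\mathsf{T})$, and hence $\mathsf{T}$, has exponential growth rate $\rho = 1/t_0 < d$. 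Since $\mathsf{T} \subseteq u(\mathsf{T})$, for any $\rho < \rho' < d$ one gets $\sum_{m\le n} \dim_{\F_p}(\mathsf{T}_m) \le \sum_{m\le n}\dim_{\F_p}(u(\mathsf{T})_m) = O(\rho'^{\,n})$, which by the asymptotics $\dim_{\F_p}(\mathsf{R}/\mathsf{R}_{>n}) = (1+o(1))\,d^{\,n+1}/(n(d-1))$ of Lemma~\ref{lem:densities_coincide_restricted_lie} is $o\bigl(\dim_{\F_p}(\mathsf{R}/\mathsf{R}_{>n})\bigr)$; therefore $\dens_\mathsf{R}(\mathsf{S}) = \dens_\mathsf{R}(\mathsf{T}) = 0$.

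The step I expect to be the main obstacle is the very first reduction, where I have tacitly used that $\mathsf{T} = \mathsf{S}_\mathrm{grd}$ remains \emph{finitely generated} as a restricted Lie algebra (so that $g$ is a genuine polynomial). Passing to leading terms can a priori introduce new generators through cancellation in brackets and $p$-maps, and it is precisely here that honest finite generation of $\mathsf{S}$ as a restricted algebra must be used, rather than mere finite generation of its enveloping algebra, which is too weak: the latter is satisfied, for instance, by non-finitely-generated subalgebras of positive density. This is also the point at which the restricted setting genuinely differs from the free Lie algebra case, where finite-codimension subalgebras are never finitely generated. Securing that finite generation survives the passage to $\mathsf{S}_\mathrm{grd}$ --- equivalently, controlling the generator series $g$ directly by means of Kukin's Schreier formula --- is the crux on which the whole argument depends.
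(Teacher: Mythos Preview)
Your growth-rate argument via the Schreier identity is sound once you are in the graded situation, and it is close in spirit to what the paper does later for Demushkin algebras in Proposition~\ref{pro:fg-subalgs-in-Demushkin-dens-0}. But the gap you yourself flag is genuine and is not addressed: there is no reason given why $\mathsf{S}_\mathrm{grd}$ should remain a \emph{finitely generated} restricted Lie algebra. Leading-term cancellation in brackets and in the cross-terms of the $p$-map can, a priori, produce new graded generators in arbitrarily high degree, so your $g$ need not be a polynomial, and the whole chain from ``$g(1/d)<1$'' onward collapses. Nothing in Kukin's structure theory or in the Schreier formula obviously repairs this; at best one would need a separate argument (of SAGBI/initial-ideal flavour) that in a free restricted Lie algebra the associated graded of a finitely generated subalgebra is again finitely generated, and that is not in the paper.

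The paper sidesteps the issue entirely by never passing to $\mathsf{S}_\mathrm{grd}$. Its proof is short and structural: by \cite[Thm.~3]{Ku72}, $\mathsf{S}$ is a free factor of a restricted subalgebra $\mathsf{U}$ of \emph{finite} codimension in~$\mathsf{R}$, so $\mathsf{U}=\mathsf{S}\rightthreetimes\mathsf{I}$ with $\mathsf{I}\ne\{0\}$ (as $\mathsf{S}$ has infinite codimension). By \cite[Lem.~10]{Ku83}, $\mathsf{U}$ contains a nonzero restricted ideal $\mathsf{J}\trianglelefteq\mathsf{R}$, hence $\mathsf{I}\cap\mathsf{J}$ is a nonzero subideal and Proposition~\ref{prop:subideals_density_1} gives $\dens_\mathsf{R}(\mathsf{I})=1$. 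Since $\mathsf{U}=\mathsf{S}\oplus\mathsf{I}$ as vector spaces and $\mathsf{U}$ has finite codimension, one reads off $\dens_\mathsf{R}(\mathsf{S})=1-\dens_\mathsf{R}(\mathsf{I})=0$. Your analytic route would become a legitimate alternative if you can secure the graded reduction (or bypass it by bounding $\dim\bigl((\mathsf{S}+\mathsf{R}_{>n})/\mathsf{R}_{>n}\bigr)$ directly via $u(\mathsf{S})$ without assuming $u(\mathsf{S})$ is graded); as written, the argument is incomplete.
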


\begin{proof}
  By \cite[Thm.~3]{Ku72} (also refer to~\cite{BrKoSt05}), the
  restricted Lie subalgebra $\mathsf{S}$ is a free factor of a
  restricted Lie subalgebra $\mathsf{U}$ of finite codimension
  in~$\mathsf{R}$.  Then $\mathsf{U} = \mathsf{S} \ast \mathsf{T}$ is
  a split extension
  $\mathsf{U} = \mathsf{S} \rightthreetimes \mathsf{I}$ of
  $\mathsf{I} = \ker(\eta)$ by~$\mathsf{S}$, where
  $\eta \colon \mathsf{U} \rightarrow \mathsf{S}$ denotes the retract
  that restricts to the identity on $\mathsf{S}$ and maps $\mathsf{T}$
  to~$\{0\}$.  Since $\mathsf{S}$ has infinite codimension
  in~$\mathsf{R}$, we deduce that $\mathsf{I} \ne \{0\}$.  By
  \cite[Lem.~10]{Ku83}, the restricted Lie algebra $\mathsf{U}$
  contains a restricted Lie ideal
  $\mathsf{J} \trianglelefteq \mathsf{R}$ of finite codimension.
  Hence $\mathsf{I} \cap \mathsf{J} \ne \{0\}$ is a restricted Lie
  subideal of $\mathsf{R}$ and
  $\dens_\mathsf{R}(\mathsf{I} \cap \mathsf{ }J)=1$, by
  Proposition~\ref{prop:subideals_density_1}.  Thus
  $\dens_{\mathsf{R}}(\mathsf{I}) = 1$ and the claim follows from
  \[
  1 = \dens_{\mathsf{R}}(\mathsf{U}) = \dens_{\mathsf{R}}(\mathsf{S}
  \rightthreetimes I) = \dens_{\mathsf{R}}(\mathsf{S}) +
  \dens_{\mathsf{R}}(\mathsf{I}) =
  \dens_{\mathsf{R}}(\mathsf{S}) + 1. \qedhere
  \]
\end{proof}

Putting together Propositions~\ref{prop:subideals_density_1} and
\ref{prop:finitely_gen_subalg_density_0} and \cite[Lem.~10]{Ku83}, we
obtain the following.

\begin{Cor} \label{cor:equiv-for-fg-restr-Lie_subalg} Let $\mathsf{S}$
  be a finitely generated restricted Lie subalgebra of the free
  restricted $\mathbb{F}_p$-Lie algebra~$\mathsf{R}$. Then the
  following are equivalent:
  \begin{enumerate}[\rm (1)]
  \item $\mathsf{S}$ contains a non-trivial restricted Lie subideal of
    $\mathsf{R}$;
  \item $\mathsf{S}$ has finite codimension in $\mathsf{R}$;
  \item $\mathsf{S}$ has density $\dens_\mathsf{R}(\mathsf{S}) = 1$.
  \end{enumerate}
\end{Cor}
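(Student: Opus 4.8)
The plan is to prove Corollary~\ref{cor:equiv-for-fg-restr-Lie_subalg} by establishing the cyclic chain of implications $(1) \Rightarrow (3) \Rightarrow (2) \Rightarrow (1)$, drawing on the three results the excerpt has already assembled. The implication $(1) \Rightarrow (3)$ is almost immediate: if $\mathsf{S}$ contains a non-trivial restricted Lie subideal $\mathsf{T}$ of $\mathsf{R}$, then by Proposition~\ref{prop:subideals_density_1} we have $\dens_\mathsf{R}(\mathsf{T}) = 1$; since density is monotone with respect to inclusion (the defining lower limit only increases as the subalgebra grows) and is bounded above by~$1$, we conclude $\dens_\mathsf{R}(\mathsf{S}) = 1$ as well.

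Next I would handle $(3) \Rightarrow (2)$ by contraposition, which is precisely the content of Proposition~\ref{prop:finitely_gen_subalg_density_0}. That proposition states that a finitely generated restricted Lie subalgebra $\mathsf{S}$ of infinite codimension has density~$0$; so if $\mathsf{S}$ has density~$1$ (in particular, nonzero), then it cannot have infinite codimension, i.e.\ $\dim_{\F_p}(\mathsf{R}/\mathsf{S}) < \infty$. Here the hypothesis that $\mathsf{S}$ is finitely generated is used essentially, as flagged in the corollary's statement.

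Finally, for $(2) \Rightarrow (1)$ I would invoke \cite[Lem.~10]{Ku83}, which (as used already in the proof of Proposition~\ref{prop:finitely_gen_subalg_density_0}) guarantees that any restricted Lie subalgebra $\mathsf{S}$ of finite codimension in $\mathsf{R}$ contains a restricted Lie ideal $\mathsf{J} \trianglelefteq \mathsf{R}$ of finite codimension. Since $\mathsf{R}$ is infinite dimensional, such a $\mathsf{J}$ is non-trivial, and being a genuine ideal it is in particular a non-trivial restricted Lie subideal of~$\mathsf{R}$ contained in~$\mathsf{S}$, giving~(1). This closes the cycle and yields the equivalence of all three conditions.

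I do not anticipate a serious obstacle here, since all the heavy lifting is done by the preceding propositions and by Kukin's lemma; the corollary is essentially a repackaging. The one point requiring a small amount of care is the monotonicity of density used in $(1) \Rightarrow (3)$, which follows directly from the lower-limit formula in Definition~\ref{defi:density} since $\mathsf{T} \le \mathsf{S}$ forces $\dim_{\F_p}((\mathsf{T} + \mathsf{R}_{>n})/\mathsf{R}_{>n}) \le \dim_{\F_p}((\mathsf{S} + \mathsf{R}_{>n})/\mathsf{R}_{>n})$ for every~$n$, so that $1 = \dens_\mathsf{R}(\mathsf{T}) \le \dens_\mathsf{R}(\mathsf{S}) \le 1$. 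The only other subtlety worth noting explicitly is that the finite-generation hypothesis on $\mathsf{S}$ is indispensable for $(3) \Rightarrow (2)$, as Example~\ref{Ex:fg-proper-but-1} (via its restricted-Lie analogue) shows that non-finitely-generated subalgebras can have density~$1$ while still being proper.
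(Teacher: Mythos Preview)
Your proof is correct and follows essentially the same approach as the paper, which simply states that the corollary is obtained by putting together Propositions~\ref{prop:subideals_density_1} and~\ref{prop:finitely_gen_subalg_density_0} with \cite[Lem.~10]{Ku83}. Your cyclic chain $(1) \Rightarrow (3) \Rightarrow (2) \Rightarrow (1)$ unpacks exactly these three ingredients in the expected way.
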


An alternative proof of Corollary~\ref{cor:all_densities_restricted}
can now be obtained by modifying the proof of
Theorem~\ref{thm:all_densities_freelie} as follows.  The restricted
Lie ideal
$\mathsf{N} = \langle Y_1 \rangle_\mathsf{R} \trianglelefteq
\mathsf{R}$
generated by $Y_1$ is of infinite codimension and has density $1$
in~$\mathsf{R}$.  For each $k \in \N$, let
$\mathsf{S}(k) \le \mathsf{R}$ denote instead the restricted Lie
subalgebra generated by $Y_k$ and replace occurrences of
$\mathsf{L}_k$ by $\mathsf{N}_k = \mathsf{N} \cap \mathsf{R}_k$ so
that $\mathsf{S}$ is constructed inside~$\mathsf{N}$.


\subsection{From free restricted Lie algebras to free pro-$p$
  groups} \label{subsec:from-Lie_algebras_to_groups} 

  Let $G$ be a finitely generated pro-$p$ group, with
  associated restricted $\F_p$-Lie algebra~$\mathsf{R}(G)$; see
  Section~\ref{subsec:assoc-rest-Lie}.  Let
  $\eta \colon G \to \mathsf{R}(G)$ denote the \emph{standard map},
  given by $1 \mapsto 0$ and
  \[
  g \mapsto g Z_{n+1}(G) \in \mathsf{R}_n(G), \quad \text{where
    $g \in Z_n(G) \smallsetminus Z_{n+1}(G)$ for suitable $n \in \N$,}
  \]

  We observed before that, in general, the mapping
  $H \mapsto \mathsf{R}_G(H)$ from closed subgroups of $G$ to
  restricted Lie subalgebras of~$\mathsf{R}(G)$ is not particularly
  well behaved.  Let us illustrate some unpleasant phenomena by giving
  two concrete examples.

  \begin{Ex}\label{ex:notfgnorfree}
    Consider the cyclotomic extension of the ring of $p$-adic
    integers, $\mathfrak{O} = \mathbb{Z}_p[\zeta]$, where $\zeta$
    denotes a primitive $p^2$th root of unity.  Recall that the
    cyclotomic field $\mathbb{Q}_p(\zeta)$ is a totally ramified
    extension of $\mathbb{Q}_p$ of degree~$r = \varphi(p^2) = (p-1)p$.
    Indeed, $\pi = \zeta -1$ is a uniformising element and
    \[
    \mathfrak{O}_+ = \mathbb{Z}_p \oplus \pi \mathbb{Z}_p \oplus
    \ldots \oplus \pi^{r-1} \mathbb{Z}_p \cong \mathbb{Z}_p^{\, r}.
  \]
  Furthermore, we have $\pi^r \mathfrak{O} = p\mathfrak{O}$.
  
  We consider the semidirect product $G = T \ltimes A$, where
  $ T = \langle s \rangle \cong \mathbb{Z}_p$,
  \[
  A = \langle a_0, \ldots, a_{r-1} \rangle \cong \mathfrak{O}_+ \qquad
  \text{via} \qquad \psi \colon A \to \mathfrak{O}, \quad
  \prod\nolimits_{i=0}^{r-1} a_i^{\, \ell_i} \mapsto
  \sum\nolimits_{i=0}^{r-1} \ell_i \pi^i
  \]
  and the action of $T$ on $A$ is given by
  $(b^s) \psi = b \psi \cdot \zeta$ for $b \in A$.  The group $G$ is a
  torsion-free, metabelian $p$-adic analytic pro-$p$ group of
  dimension $r+1$.

  The first terms of the Zassenhaus series of~$G$ can be computed with
  ease; a description of the entire series would be cumbersome.  We
  obtain, for $1 \le n \le p^2$,
  \[
  Z_n(G) = \langle s^{\, p^{\lambda(n)}} \rangle \ltimes A_n, \quad
  \text{where $\lambda(n) = \lceil \log_p(n) \rceil$ and $A_n \psi =
    \pi^{n-1} \mathfrak{O}$.} 
  \]
  From this it is easy to check that in terms of the standard map
  $\eta \colon G \to \mathsf{R}(G)$ we have, for $1 \le n \le r$,
  \[
  \mathsf{R}_n(G)  =
  \begin{cases}
    \F_p (s \eta) + \F_p (a_0 \eta) & \text{is
      $2$-dimensional if $n = 1$,} \\
    \F_p (s \eta)^{[p]} + \F_p (a_{p-1} \eta) & \text{is
      $2$-dimensional if $n = p$,}\\
    \F_p (a_{n-1} \eta) & \text{is $1$-dimensional otherwise.}
  \end{cases}
  \]
  Furthermore, we obtain $(a_i \eta)^{[p]} = 0$ for $0 \le i \le p-2$.
  This implies, for instance, that the restricted Lie algebra
  $\mathsf{R}_G(H)$ corresponding to
  $H = \langle a_0 \rangle \cong \Z_p$ is neither $1$-generated nor
  free.
\end{Ex}

  \begin{Ex}
    The Sylow pro-$p$ subgroup $G = \mathrm{Aut}^1(\F_p(\!(t)\!))$ of
    $\mathrm{Aut}(\F_p(\!(t)\!))$, the automorphism group of the local
    field~$\F_p(\!(t)\!)$, consists of all
    $g \in \mathrm{Aut}(\F_p(\!(t)\!))$ such that
    $t.g \equiv_{t^2} t$.  The bijection $G \to t + t^2\F_p[\![t]\!]$,
    $g \mapsto t.g$ provides a realisation of $G$ as a group of formal
    power series under substitution; in this form the group is known
    as the Nottingham group over~$\F_p$; compare~\cite{Ca00}.  For
    simplicity we exclude the case $p=2$, which requires special
    attention.
  
    The group $G$ is a $2$-generated, hereditarily just infinite
    pro-$p$ group with many interesting properties; a base for the
    neighbourhoods of the identity element is given by the descending
    chain of open normal subgroups
    \[
    G_m = \{ g \in G \mid t.g \equiv_{t^{m+1}} t \}, \quad m \in \N.
    \]
    The depth of $g \in G$ is defined as
    $d(g) = \inf \{ m \in \N \mid g \not \in G_{m+1} \}$.
    Using~\cite[Rem.~1 and Lem.~1]{Ca00}, it is easy to see that for
    all $n \in \N$,
    \[
    Z_n(G) = \gamma_n(G) = G_{m(n)}, \qquad \text{where} \quad
      m(n) = n+1 + \lfloor (n-2)/(p-1) \rfloor.
    \]
    Conversely, this means that $G_m \subseteq Z_{n(m)}(G)$, but
    $G_m \not \subseteq Z_{n(m)+1}(G)$, where
    $n(m) = m-1 - \lfloor (m-2)/p \rfloor$.

    Consider the procyclic subgroup $H = \langle h \rangle \cong \Z_p$
    whose generator $h \in G$ is given by $t.h = t+t^3$.  Thus
    $d(g) = 2$ and, for $k \in \N_0$, it is known that
    $d_k = d(h^{p^k})$ satisfies $d_k \equiv_p d(h) = 2$ and
    $d_{k+1} \ge p d_k + 2$; compare~\cite[Lem.~1 and Thm.~6]{Ca00}.
    (In fact, $d_k = 2 (p^{k+1} -1)/(p-1)$ for all~$k \in \N_0$, in
    other words: $h$ is $2$-ramified; compare~\cite{No17}.)

    For $k \in \N_0$, we observe that
    $ n( d_k ) = \big( (p-1)d_k - (p-2) \big)/p$, hence
    \[
    n(d_{k+1}) \ge \frac{ (p-1) (p d_k +2) - (p-2) }{p} =
    (p-1) d_k + 1 > p \,  n(d_k),
    \]    
    and we conclude that $(h^{p^k} \eta)^{[p]} = 0$, where
    $\eta \colon G \to \mathsf{R}(G)$ denotes the standard map.  This
    shows that, restricted to $\mathsf{R}_G(H)$, the $p$-map is the
    null map.  Consequently, the infinite dimensional, abelian
    restricted Lie algebra $\mathsf{R}_G(H)$ is \emph{not} finitely
    generated.
  \end{Ex}

  For certain types of pro-$p$ groups~$G$, the mapping
  $H \mapsto \mathsf{R}_G(H)$ may display considerably better
  properties, but little in this direction seems to be known.  In the
  following we make use of a consequence that is easily derived from
  the proof of~\cite[Prop.~3.2]{ErJZ13}.  (Note that in the statement
  of~\cite[Prop.~3.2]{ErJZ13} it is assumed from the start that $F$ is
  free and that $W$ is a valuation; we consider situations that are, a
  priori, more general.)

  \begin{Prop} \label{pro:Andrei-Misha} Let $G$ be a finitely
    generated pro-$p$ group, and let $X \subseteq G$ be such that the
    standard map $\eta \colon G \to \mathsf{R}(G)$ induces a bijection
    from $X$ onto $Y = X \eta$.  Suppose
    that the restricted Lie subalgebra
    $\langle Y \rangle_\mathrm{res.\, Lie} \le \mathsf{R}(G)$ is
    freely generated by~$Y$.  

    Then the pro-$p$ group $H = \langle X \rangle \le_\mathrm{c} G$ is
    freely generated by the $1$-convergent subset~$X$, and
    $\mathsf{R}_G(H) = \langle Y \rangle_\mathrm{res.\, Lie}$.
\end{Prop}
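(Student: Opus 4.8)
The plan is to prove the two easy containments directly and to obtain the remaining, harder assertions by recasting the hypothesis inside the restricted universal enveloping algebra, which is exactly the setting of \cite[Prop.~3.2]{ErJZ13}. Write $\mathsf{S} = \langle Y \rangle_\mathrm{res.\,Lie}$ and recall that $\mathsf{R}_G(H) = \bigoplus_n \mathsf{R}_{G,n}(H)$ is a graded restricted Lie subalgebra of $\mathsf{R}(G)$, since the Lie bracket and $p$-map on $\mathsf{R}(G)$ are induced by group commutators and $p$-th powers and $H$ is a subgroup. As $X \subseteq H$, each $x\eta$ with $x \in X$ lies in $\mathsf{R}_G(H)$, and hence $\mathsf{S} \subseteq \mathsf{R}_G(H)$; this is the trivial inclusion. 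For the convergence statement, note that $Y$, being a free generating set of $\mathsf{S}$, consists of $\F_p$-linearly independent homogeneous elements; since each homogeneous component $\mathsf{R}_n(G)$ is finite dimensional (as $G$ is finitely generated), only finitely many elements of $Y$ can have degree at most $n$. Translating through the bijection $\eta\vert_X$, for every $n$ all but finitely many $x \in X$ lie in $Z_{n+1}(G)$, which is precisely the assertion that $X$ is $1$-convergent.

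It remains to prove that $X$ freely generates $H$ and that $\mathsf{R}_G(H) = \mathsf{S}$. First I would pass to the associated graded algebra $\mathsf{A} = \mathrm{gr}\,\F_p[\![G]\!]$ of the completed group algebra with respect to the filtration by powers of the augmentation ideal $I$; this is the restricted universal enveloping algebra of $\mathsf{R}(G)$, and the standard map $\eta$ is the leading-term map $g \mapsto (g-1) + I^{n+1}$ for $g \in Z_n(G)\smallsetminus Z_{n+1}(G)$. By the Poincar\'e--Birkhoff--Witt theorem for restricted Lie algebras, the enveloping algebra of the subalgebra $\mathsf{S}$ embeds in $\mathsf{A}$, and because $\mathsf{S}$ is, by hypothesis, free on $Y$, this enveloping algebra is the free associative algebra on $Y$. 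Thus $Y$ generates a free associative subalgebra of $\mathsf{A}$. This is exactly the input needed to run the leading-term argument of \cite[Prop.~3.2]{ErJZ13}: although that proposition is phrased for a free pro-$p$ group $F$ equipped with a valuation, its proof only manipulates leading terms inside $\mathrm{gr}\,\F_p[\![F]\!]$ and never uses freeness of the ambient group, so it applies with $G$, the Zassenhaus valuation $W(g) = n$ (for $g \in Z_n(G)\smallsetminus Z_{n+1}(G)$), and the free associative subalgebra generated by $Y$.

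Carried out in our setting, the leading-term argument shows that every nontrivial reduced word in $X$ has nonzero leading term, and that this leading term coincides with the corresponding Lie word evaluated on $Y$, which is itself nonzero precisely because $\mathsf{S}$ is free on $Y$. The nonvanishing gives that $X$ freely generates the free pro-$p$ group $H$, while the identification of leading terms shows that the leading term of every element of $H$ already lies in $\mathsf{S}$, yielding the reverse inclusion $\mathsf{R}_G(H) \subseteq \mathsf{S}$ and hence equality. I expect the main obstacle to be precisely this leading-term bookkeeping: one must control the leading term of an arbitrary product $\prod_j \bigl(1 + (x_{i_j} - 1)\bigr)$ in $\mathsf{A}$ and verify that no cancellation produces a lower-degree term escaping the subalgebra generated by $Y$. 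It is here that the freeness of $\mathsf{S}$, reflected in the freeness of its associative enveloping algebra, is indispensable, since it provides exactly the dimension count that rules out such collapsing; this is the content we import from \cite[Prop.~3.2]{ErJZ13}.
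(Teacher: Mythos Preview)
Your proposal is correct and follows essentially the same route as the paper: both establish $1$-convergence and the easy inclusion $\mathsf{S} \subseteq \mathsf{R}_G(H)$ directly, then derive freeness of $H$ on $X$ and the reverse inclusion by adapting the proof of \cite[Prop.~3.2]{ErJZ13}. The only difference is emphasis: the paper makes the adaptation explicit via Hall's power-commutator factorisations $h = \prod_{(c,k)} c^{\,p^k j_{c,k}}$ in $H$, computing $h\eta = \sum_{(c,k) \in \mathcal{M}} j_{c,k}\,(c^\mathrm{Lie})^{[p]^k}$ over the pairs $(c,k)$ with $p^k \wt(c)$ minimal; this is precisely the ``leading-term bookkeeping'' you flag as the main obstacle, and it is the concrete mechanism behind your more schematic expansion of $\prod_j (1+(x_{i_j}-1))$ in the graded group algebra.
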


\begin{proof}
  One easily verifies that $X$ is $1$-convergent.  Next we show that
  $H$ is freely generated by the $1$-convergent set~$X$;
  compare~\cite[\S 5.1]{Wi98}.  As explained in~\cite[\S 3.1]{ErJZ13}
  every element of $h \in H$ admits (possibly not uniquely) a
  power-commutator factorisation
  \begin{equation} \label{equ:p-c-factorisation}
  h = \prod_{(c,k) \in \mathcal{C} \times \N_0} c^{\,p^k  j_{c,k} },
  \end{equation}
  where $\mathcal{C}$ is the family (`multiset' in the parlance
  of~\cite{ErJZ13}) of $X$-basic group commutators and each $j_{c,k}$
  lies in $\{0,1,\ldots, p-1 \}$.  Tacitly, we have fixed an ordering
  of the countable family $\mathcal{C} \times \N_0$, i.e., a bijection
  from $\N$ to $\mathcal{C} \times \N_0$, so that we know in which
  order the infinite, but in any case convergent
  product~\eqref{equ:p-c-factorisation} is to be carried out.

  Furthermore, $H$ is freely generated by $X$ if and only if the
  neutral element $1$ has only one power-commutator factorisation,
  namely the trivial one resulting from choosing $j_{c,k} = 0$ for all
  $(c,k) \in \mathcal{C} \times \N_0$.  (Implicitly, this criterion or
  a similar one is also used for establishing the basic part of ``(ii) implies (i)''
  in~\cite[Prop.~3.2]{ErJZ13}.)  The mapping
  \[
  W \colon H \to [0,1], \quad h \mapsto 2^{-d(h)}, \qquad \text{where}
  \quad d(h) = \inf \{ n \in \N \mid h \in Z_n(G) \},
  \]
  is a pseudo-valuation on $H$, in the sense of~\cite{ErJZ13}.
  Adapting the argument that establishes ``(iii) implies (ii)''
  in~\cite[Prop.~3.2]{ErJZ13} (to the present situation $(H,W,X,Y)$ in
  place of $(F,W,X,S)$ in~\cite{ErJZ13}), we see that, indeed, any
  $h \in H$ that admits a non-trivial power-commutator factorisation
  cannot be equal to~$1$.  Thus $H$ is freely generated by~$X$.

  It remains to prove that
  $\mathsf{R}_G(H) = \langle Y \rangle_\mathrm{res.\, Lie}$.  Clearly,
  $\mathsf{R}_G(H) \supseteq \langle Y \rangle_\mathrm{res.\, Lie}$ holds, and
  the reverse inclusion is derived again from power-commutator
  factorisations, as in ``(iii) implies (iv)''
  in~\cite[Prop.~3.2]{ErJZ13}: briefly speaking,
  $h \in H \smallsetminus \{1\}$ with power-commutator
  factorisation~\eqref{equ:p-c-factorisation} is mapped under $\eta$
  to the finite linear combination
  \[
  h \eta = \sum_{(c,k) \in \mathcal{M}} j_{c,k} \big( c^\mathrm{Lie}
  \big)^{[p]^k}
  \]
  of iterated commutators in~$Y$, where
  $\mathcal{M} = \{ (c,k) \in \mathcal{C} \times \N_0 \mid p^k \wt(c)
  = d(h) \}$
  and $c^\mathrm{Lie}$ is the basic Lie commutator in~$Y$ that
  corresponds to the basic group commutator~$c$ in~$X$.
\end{proof}

  \begin{Cor} \label{cor:fg-H-gives-fg-Lie} Let $G$ be a finitely
    generated pro-$p$ group and let $H \le_\mathrm{c} G$ be a
    subgroup.  Suppose that the corresponding restricted Lie
    subalgebra $\mathsf{R}_G(H) \le \mathsf{R}(G)$ is free.  Then
    $\mathrm{d}(H)$, the minimal number of generators for~$H$, is
    equal to the free restricted rank of~$\mathsf{R}_G(H)$.  In
    particular, $H$ is finitely generated if and only if
    $\mathsf{R}_G(H)$ is.
\end{Cor}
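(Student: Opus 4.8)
The plan is to realise $H$ itself as a free pro-$p$ group whose free generating set is carried, under the standard map $\eta \colon G \to \mathsf{R}(G)$, bijectively onto a free generating set of $\mathsf{R}_G(H)$; once this is done, Proposition~\ref{pro:Andrei-Misha} and the fact that the rank of a free pro-$p$ group equals the cardinality of any topological free generating set give the equality $\mathrm{d}(H) = \mathrm{rk}\,\mathsf{R}_G(H)$ at once, and the ``in particular'' is immediate. It is worth stressing \emph{why} one cannot argue more naively by pushing a generating set of $H$ into $\mathsf{R}(G)$ and reading off generators of $\mathsf{R}_G(H)$: the leading terms $h\eta$ of group generators need not generate the associated graded algebra, as Example~\ref{ex:notfgnorfree} and the Nottingham example show (there $\mathrm{d}(H)=1$ while $\mathsf{R}_G(H)$ is infinitely generated). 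The freeness hypothesis on $\mathsf{R}_G(H)$ is precisely what repairs this defect.

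First I would use that $\mathsf{R}_G(H) = \bigoplus_n \mathsf{R}_{G,n}(H)$ is a \emph{graded} restricted Lie subalgebra of~$\mathsf{R}(G)$. Because the Lie bracket and the $p$-map both strictly raise the degree, the graded ideal generated by all brackets and all $p$-th powers is concentrated in degrees~$\ge 2$; hence the space of generators of the free restricted Lie algebra $\mathsf{R}_G(H)$ inherits the grading, and one may choose a \emph{homogeneous} free generating set $Y = \{\mathsf{y}_i\}$ with $\mathsf{y}_i \in \mathsf{R}_{G,n_i}(H)$. Homogeneity is essential here, since $g\eta$ is always homogeneous: for each $i$ I can pick a lift $x_i \in (H \cap Z_{n_i}(G)) \smallsetminus Z_{n_i+1}(G)$ with $x_i\eta = \mathsf{y}_i$, and set $X = \{x_i\}$. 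By construction $\eta$ induces a bijection $X \to Y$, and $\langle Y\rangle_\mathrm{res.\,Lie} = \mathsf{R}_G(H)$ is freely generated by~$Y$. Proposition~\ref{pro:Andrei-Misha} then yields that $H' := \langle X\rangle \le_\mathrm{c} H$ is freely generated by~$X$ and that $\mathsf{R}_G(H') = \langle Y\rangle_\mathrm{res.\,Lie} = \mathsf{R}_G(H)$.

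The remaining and crucial step, which I expect to be the main obstacle, is to upgrade the inclusion $H' \le H$ to an equality using only $\mathsf{R}_G(H') = \mathsf{R}_G(H)$; here the separating property $\bigcap_n Z_n(G) = 1$ and the closedness of $H'$ are indispensable. Equality of the associated graded algebras means $(H'\cap Z_n)Z_{n+1} = (H\cap Z_n)Z_{n+1}$ for all~$n$. Given $h \in H \cap Z_n$, I would produce a convergent product $h = \prod_{k\ge 1} h_k$ with $h_k \in H' \cap Z_{n+k-1}$ by successive approximation: put $g_0 = h$ and, having $g_{k-1} \in H \cap Z_{n+k-1}$, choose $h_k \in H' \cap Z_{n+k-1}$ congruent to $g_{k-1}$ modulo $Z_{n+k}$ and set $g_k = h_k^{-1} g_{k-1} \in H \cap Z_{n+k}$, so that $h = h_1 \cdots h_k\, g_k$ with $g_k \to 1$. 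Since $H'$ is closed this forces $h \in H'$; taking $n = 1$ gives $H = H \cap Z_1(G) \subseteq H'$, whence $H = H' = \langle X\rangle$. As $H$ is then freely generated by~$X$, its minimal number of topological generators is $\mathrm{d}(H) = \lvert X\rvert = \lvert Y\rvert$, the free restricted rank of $\mathsf{R}_G(H)$, which completes the argument.
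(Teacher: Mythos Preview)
Your argument is correct and follows essentially the same route as the paper's proof: choose a homogeneous free generating set $Y$ for $\mathsf{R}_G(H)$, lift it to $X \subseteq H$ via the standard map, apply Proposition~\ref{pro:Andrei-Misha} to obtain that $H' = \langle X \rangle$ is free on $X$ with $\mathsf{R}_G(H') = \mathsf{R}_G(H)$, and conclude $H' = H$. The paper is simply terser at the two points you expand in detail---it cites \cite{Ku72} for the existence of homogeneous free generators and asserts the implication ``$\langle X \rangle \le H$ and $\mathsf{R}_G(\langle X \rangle) = \mathsf{R}_G(H)$ force $\langle X \rangle = H$'' without spelling out the successive-approximation step, which you carry out correctly.
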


\begin{proof}
  Suppose that
  $\mathsf{R}_G(H) = \langle Y \rangle_\mathrm{res. \, Lie}$ is freely
  generated by a set~$Y$ of homogeneous elements;
  compare~\cite[Lem.~2]{Ku72} and the comments in~\cite{BrKoSt05}.  We
  choose $X \subseteq H$ such that the standard map
  $\eta \colon G \to \mathsf{R}(G)$ induces a bijection from $X$
  onto~$Y$.  By Proposition~\ref{pro:Andrei-Misha}, the group
  $\langle X \rangle$ is freely generated by $X$ and
  $\mathsf{R}_G(\langle X \rangle) = \mathsf{R}_G(H)$.  Since
  $\langle X \rangle \le H$, this implies $\langle X \rangle = H$.
\end{proof}

\begin{proof}[Proof of Theorem~\ref{thm:full-spectrum-Zassenhaus-free}]
  Let $F = \langle x_1, \ldots, x_d \rangle$ be a finitely generated
  free pro-$p$ group of free rank~$d \ge 2$.  Then $\mathsf{R}(F)$ is
  a free restricted $\mathbb{F}_p$-Lie algebra and every restricted
  Lie subalgebra of $\mathsf{R}(F)$ is itself free.  Set
  $N = \langle x_2, \ldots, x_d \rangle^F \trianglelefteq_\mathrm{c}
  F$.
  Proposition~\ref{prop:subideals_density_1} shows that
  $\hdim_F^\mathcal{Z}(N) = 1$.  By
  Corollary~\ref{cor:fg-H-gives-fg-Lie} and
  Proposition~\ref{prop:finitely_gen_subalg_density_0}, every finitely
  generated subgroup $H \le_\mathrm{c} F$ with $H \subseteq N$ has
  Hausdorff dimension $\hdim_G^\mathcal{Z}(H) = 0$.  Thus the Interval
  Theorem, stated at the end of Section~\ref{sec:introduction}, shows
  that $\hspec^\mathcal{Z}(F) = [0,1]$.
\end{proof}


\section{Non-soluble Demushkin pro-$p$ groups and the Zassenhaus
  series} \label{sec:Demushkin}

In this section we extend the results obtained in
Section~\ref{subsec:from-Lie_algebras_to_groups} to cover non-soluble
Demushkin pro-$p$ groups and, more generally, mixed finite direct
products of finitely generated free and Demushkin pro-$p$ groups.  In
particular, we prove
Theorem~\ref{thm:full-spectrum-Zassenhaus-Demushkin} and
Corollary~\ref{cor:full-spectrum-Zassenhaus-mixed}.

Let $D$ be a non-soluble Demushkin pro-$p$ group.  According to the
classification of Demushkin groups (see~\cite[Sec.~12.3]{Wi98} and
\cite{La67,Se95}) the pro-$p$ group $D$ admits a finite one-relator
presentation of the form
\begin{equation} \label{equ:Demaushkin-presentation} D \cong \langle
  x_1, \ldots, x_d \mid r = r(x_1,\ldots,x_d) = 1 \rangle,
\end{equation}
where $d \ge 3$ and one of the following holds (in this context
$p^\infty$ is to be interpreted as~$0$):
\begin{enumerate}[$\circ$]
\item $d \equiv_2 0$ and
  $r = x_1^{\, p^f} [x_1,x_2] \cdots [x_{d-1},x_d]$ with
  $f \in \mathbb{N} \cup \{ \infty \}$ such that $p^f \ne 2$;
\item $d \equiv_2 1$, $p=2$ and
  $r = x_1^{\, 2} x_2^{\, 2^f} [x_2,x_3] [x_4,x_5] \cdots
  [x_{d-1},x_d]$ with $f \in \mathbb{N}_{ \ge 2} \cup \{ \infty \}$;
\item $d \equiv_2 0$, $p=2$, and
  $r = x_1^{\, 2+2^f} [x_1,x_2] [x_3,x_4] \cdots [x_{d-1},x_d]$ with
  $f \in \mathbb{N}_{\ge 2} \cup \{ \infty \}$
\item[] \hspace{0em}\phantom{$d \equiv_2 0$, $p=2$, and} or
  $r = x_1^{\, 2} [x_1,x_2] x_3^{\, 2^f} [x_3,x_4] \cdots
  [x_{d-1},x_d]$ with $f \in \mathbb{N}_{\ge 2}$.
\end{enumerate}


\subsection{Associated restricted Lie algebras and free subalgebras}
The graded restricted $\mathbb{F}_p$-Lie algebra associated to $D$
with respect to the Zassenhaus series has the form
\[
\mathsf{D} = \mathsf{R}(D) \cong \mathsf{R} / \mathsf{J} \qquad \text{for}
\qquad\mathsf{J} = \langle \mathsf{r} \rangle_\mathsf{R}
\trianglelefteq \mathsf{R},
\]
where $\mathsf{R}$ denotes the free restricted $\mathbb{F}_p$-Lie
algebra on $\mathsf{x}_1, \ldots, \mathsf{x}_d$ and
$\langle \mathsf{r} \rangle_\mathsf{R}$ denotes the restricted Lie
ideal generated by the homogeneous Lie element $\mathsf{r}$ given by
\begin{equation}
  \label{equ:generic-Demushkin-case}
  \mathsf{r} = [\mathsf{x}_1,\mathsf{x}_2] + \ldots +
  [\mathsf{x}_{d-1},\mathsf{x}_d], \qquad \text{with $d
    \ge 4$, $d \equiv_2 0$ and $p$ arbitrary,}
\end{equation}
or one of the following:
\begin{equation} \label{equ:special-Demushkin-cases}
  \begin{split}
    \mathsf{r} & = \mathsf{x}_1^{\, [2]} +
    [\mathsf{x}_2,\mathsf{x}_3] + \ldots +
    [\mathsf{x}_{d-1},\mathsf{x}_d] \qquad
    \text{with $d \ge 3$, $d \equiv_2 1$ and $p=2$;} \\
    \mathsf{r} & = \mathsf{x}_1^{\, [2]} +
    [\mathsf{x}_1,\mathsf{x}_2] + \ldots +
    [\mathsf{x}_{d-1},\mathsf{x}_d] \qquad \text{with
      $d \ge 4$, $d \equiv_2 0$ and $p=2$.}
  \end{split}
\end{equation}
Indeed, the presentation~\eqref{equ:Demaushkin-presentation} is
strongly free, as can be seen, for instance, from Anick's criterion;
compare \cite[\S 3]{Ga15} and \cite{La06,LaMi11}.  Applying
\cite[Thm.~2.12]{Ga15}, which is based on ideas put forward
in~\cite{La06}, we obtain the described presentation
for~$\mathsf{D}$.

Let $\mathsf{L}$ denote the free $\mathbb{F}_p$-Lie algebra on
$\mathsf{x}_1, \ldots, \mathsf{x}_d$, embedded in~$\mathsf{R}$, as in
Section~\ref{subsec:density-spectrum}.  For the next steps leading up
to Proposition~\ref{pro:Demushkin-Lie-has-free-density-1-subalgebra},
we focus on the `generic' case~\eqref{equ:generic-Demushkin-case},
where~$\mathsf{r} \in \mathsf{L}$ (and the specific shape of
$\mathsf{r}$ is of no further relevance).  In the proof of
Proposition~\ref{pro:Demushkin-Lie-has-free-density-1-subalgebra}, we
explain how the remaining `exceptional' cases
in~\eqref{equ:special-Demushkin-cases} can be reduced to the `generic'
case.  Assuming~\eqref{equ:generic-Demushkin-case}, we put
\[
\mathsf{I} = \langle \mathsf{r} \rangle_\mathsf{L},
\]
the Lie ideal generated by $\mathsf{r}$ in~$\mathsf{L}$.  From
Lemma~\ref{lem:res-S-intersect-with-L} we deduce the following result.

\begin{Lem}
  In the setup described above,
  $\langle \mathsf{I} \rangle_\mathrm{res.\, Lie} = \mathsf{J}$ and
  $\mathsf{I} = \mathsf{L} \cap \mathsf{J}$.
\end{Lem}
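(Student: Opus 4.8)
The plan is to apply Lemma~\ref{lem:res-S-intersect-with-L} with $\mathsf{M} = \mathsf{I}$ and then identify the resulting restricted Lie subalgebra with $\mathsf{J}$. First I note that $\mathsf{I} = \langle \mathsf{r} \rangle_\mathsf{L}$ is by construction a Lie ideal of $\mathsf{L}$, and in the generic case~\eqref{equ:generic-Demushkin-case} we have $\mathsf{r} \in \mathsf{L}$, so the hypotheses of Lemma~\ref{lem:res-S-intersect-with-L} are met. Writing $\mathsf{S} = \langle \mathsf{I} \rangle_\mathrm{res.\, Lie}$, the lemma immediately yields $\mathsf{S} \cap \mathsf{L} = \mathsf{I}$ and, by its ``moreover'' clause applied to the ideal $\mathsf{I} \trianglelefteq \mathsf{L}$, that $\mathsf{S}$ is a restricted Lie ideal of~$\mathsf{R}$. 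It therefore remains only to prove the single identity $\mathsf{S} = \mathsf{J}$; the second assertion $\mathsf{I} = \mathsf{L} \cap \mathsf{J}$ then follows at once by substituting $\mathsf{J}$ for $\mathsf{S}$ in $\mathsf{S} \cap \mathsf{L} = \mathsf{I}$.

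To establish $\mathsf{S} = \mathsf{J}$ I would argue by two inclusions. For $\mathsf{S} \subseteq \mathsf{J}$, I first observe that $\mathsf{J} = \langle \mathsf{r} \rangle_\mathsf{R}$, being a restricted Lie ideal of~$\mathsf{R}$, is in particular closed under the adjoint action of the subalgebra $\mathsf{L} \le \mathsf{R}$; since $\mathsf{r} \in \mathsf{J}$, the Lie ideal of $\mathsf{L}$ generated by $\mathsf{r}$ --- namely $\mathsf{I}$ --- is contained in~$\mathsf{J}$. As $\mathsf{J}$ is also a restricted Lie subalgebra, it must contain the restricted Lie subalgebra generated by $\mathsf{I}$, which is exactly~$\mathsf{S}$. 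For the reverse inclusion $\mathsf{J} \subseteq \mathsf{S}$, I use that $\mathsf{S}$ is a restricted Lie ideal of~$\mathsf{R}$ (from the lemma) and that it contains $\mathsf{r} \in \mathsf{I} \subseteq \mathsf{S}$; by minimality of $\mathsf{J}$ as the restricted Lie ideal of $\mathsf{R}$ generated by $\mathsf{r}$, this forces $\mathsf{J} \subseteq \mathsf{S}$. Combining the two inclusions gives $\mathsf{S} = \mathsf{J}$, as required.

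The argument is essentially formal once Lemma~\ref{lem:res-S-intersect-with-L} is in hand, so I do not expect a serious computational obstacle. The one point requiring a little care is the inclusion $\mathsf{I} \subseteq \mathsf{J}$: it relies on the fact that a restricted Lie ideal of $\mathsf{R}$ is automatically closed under the Lie bracket with elements of $\mathsf{L}$, so that $\mathsf{J}$ absorbs all iterated Lie brackets $[\mathsf{r}, \mathsf{a}_1, \ldots, \mathsf{a}_k]$ with $\mathsf{a}_1, \ldots, \mathsf{a}_k \in \mathsf{L}$ together with their $\F_p$-linear combinations, these being precisely the elements generating~$\mathsf{I}$. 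I would make this observation explicit in order to justify $\mathsf{I} = \langle \mathsf{r} \rangle_\mathsf{L} \subseteq \mathsf{J}$ before invoking that $\mathsf{J}$ is additionally closed under the $p$-map and the full bracket of~$\mathsf{R}$.
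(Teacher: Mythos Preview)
Your proposal is correct and follows the same approach as the paper, which simply states that the lemma is deduced from Lemma~\ref{lem:res-S-intersect-with-L} without further detail. You have spelled out precisely the argument the paper leaves implicit: applying Lemma~\ref{lem:res-S-intersect-with-L} with $\mathsf{M} = \mathsf{I}$ and then verifying $\langle \mathsf{I} \rangle_\mathrm{res.\,Lie} = \mathsf{J}$ via the two inclusions, using the ``moreover'' clause for $\mathsf{J} \subseteq \mathsf{S}$.
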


By \cite [Thm.~3]{La95}, we find a (free) graded Lie subalgebra
$\mathsf{M} \le \mathsf{L}$ with $\mathsf{M} \cap \mathsf{I} = 0$ and
such that $\mathsf{M} + \mathsf{I}$ has finite co-dimension
in~$\mathsf{L}$.  Put
\[
\mathsf{S} = \langle \mathsf{M} \rangle_\mathrm{res.\,Lie} \le
\mathsf{R}.
\]

\begin{Lem}
  In the setup described above, $\mathsf{S} \cap \mathsf{J} = 0$.
  Consequently,
  $(\mathsf{S} + \mathsf{J}) / \mathsf{J} \le \mathsf{R} / \mathsf{J}
  \cong \mathsf{D}$ is a free restricted graded Lie subalgebra.
\end{Lem}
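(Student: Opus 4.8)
The plan is to reduce the claim $\mathsf{S}\cap\mathsf{J}=0$ to a dimension count in each homogeneous component, exploiting the explicit description of restricted closures furnished by Lemma~\ref{lem:res-S-intersect-with-L}. Since $\mathsf{M}$ and $\mathsf{r}$ (hence $\mathsf{I}$) are homogeneous, the subalgebras $\mathsf{S}=\langle\mathsf{M}\rangle_\mathrm{res.\,Lie}$ and $\mathsf{J}=\langle\mathsf{I}\rangle_\mathrm{res.\,Lie}$ are generated by homogeneous elements and are therefore graded; so it suffices to show $\dim_{\F_p}(\mathsf{S}\cap\mathsf{J})_n=0$ for every $n\in\N$, equivalently $\dim_{\F_p}(\mathsf{S}+\mathsf{J})_n=\dim_{\F_p}\mathsf{S}_n+\dim_{\F_p}\mathsf{J}_n$.

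First I would record a dimension formula for restricted closures. Let $\mathsf{P}\le\mathsf{L}$ be any graded Lie subalgebra and fix a homogeneous $\F_p$-basis $Y$ of $\mathsf{P}$. By Lemma~\ref{lem:res-S-intersect-with-L} we have $\langle\mathsf{P}\rangle_\mathrm{res.\,Lie}=\mathsf{P}\oplus\langle\mathsf{y}^{[p]^j}\mid\mathsf{y}\in Y,\ j\ge 1\rangle_{\F_p}$ with the indicated $p$-power terms linearly independent, so that $\{\mathsf{y}^{[p]^j}\mid\mathsf{y}\in Y,\ j\ge 0\}$ is a homogeneous $\F_p$-basis of $\langle\mathsf{P}\rangle_\mathrm{res.\,Lie}$; since $\mathsf{y}^{[p]^j}$ has weight $p^j\wt(\mathsf{y})$ this yields
\[
\dim_{\F_p}\big(\langle\mathsf{P}\rangle_\mathrm{res.\,Lie}\big)_n=\sum_{j\ge 0}\dim_{\F_p}\mathsf{P}_{n/p^j},
\]
with the convention $\dim_{\F_p}\mathsf{P}_{n/p^j}=0$ unless $p^j\mid n$. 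Taking $\mathsf{P}=\mathsf{L}$ recovers the count in Proposition~\ref{Prop:basis-restricted}, which is a reassuring check.

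Next I would identify $\mathsf{S}+\mathsf{J}$ as a restricted closure. Put $\mathsf{N}=\mathsf{M}+\mathsf{I}$; since $\mathsf{I}\trianglelefteq\mathsf{L}$ is a Lie ideal, $\mathsf{N}$ is a graded Lie subalgebra, and $\mathsf{N}=\mathsf{M}\oplus\mathsf{I}$ as graded vector spaces because $\mathsf{M}\cap\mathsf{I}=0$. As $\mathsf{J}$ is a restricted Lie ideal of $\mathsf{R}$ (by the preceding lemma), $\mathsf{S}+\mathsf{J}$ is a restricted Lie subalgebra of $\mathsf{R}$, and it plainly contains $\mathsf{N}$; conversely $\mathsf{S},\mathsf{J}\subseteq\langle\mathsf{N}\rangle_\mathrm{res.\,Lie}$, so $\mathsf{S}+\mathsf{J}=\langle\mathsf{N}\rangle_\mathrm{res.\,Lie}$. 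Applying the dimension formula to $\mathsf{P}\in\{\mathsf{M},\mathsf{I},\mathsf{N}\}$ and using $\dim_{\F_p}\mathsf{N}_m=\dim_{\F_p}\mathsf{M}_m+\dim_{\F_p}\mathsf{I}_m$ gives
\[
\dim_{\F_p}(\mathsf{S}+\mathsf{J})_n=\sum_{j\ge 0}\dim_{\F_p}\mathsf{N}_{n/p^j}=\dim_{\F_p}\mathsf{S}_n+\dim_{\F_p}\mathsf{J}_n,
\]
whence $(\mathsf{S}\cap\mathsf{J})_n=0$ for all $n$, i.e. $\mathsf{S}\cap\mathsf{J}=0$. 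For the final assertion, the natural map $\mathsf{S}\to(\mathsf{S}+\mathsf{J})/\mathsf{J}$ has kernel $\mathsf{S}\cap\mathsf{J}=0$ and is visibly onto, hence an isomorphism of graded restricted Lie algebras; since $\mathsf{S}$ is a restricted Lie subalgebra of the free restricted Lie algebra $\mathsf{R}$, it is itself free by the Schreier-type theorem \cite{Ku72}, so $(\mathsf{S}+\mathsf{J})/\mathsf{J}\cong\mathsf{S}$ is a free restricted graded Lie subalgebra.

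The main obstacle is that the naive route via leading basic commutators does \emph{not} work: from $\mathsf{M}\cap\mathsf{I}=0$ one cannot force the leading commutators of homogeneous bases of $\mathsf{M}$ and of $\mathsf{I}$ to be disjoint, since cancellation inside $\mathsf{M}\oplus\mathsf{I}$ can manufacture new leading terms. The dimension-counting argument sidesteps this entirely; its two load-bearing points are that the displayed formula depends only on the graded dimensions of $\mathsf{P}$, and that $\mathsf{S}+\mathsf{J}$ is genuinely the restricted closure of $\mathsf{M}\oplus\mathsf{I}$---which in turn hinges on $\mathsf{J}$ being a restricted \emph{ideal}, so that $\mathsf{S}+\mathsf{J}$ is closed under the bracket and the $p$-map.
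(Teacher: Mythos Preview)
Your proof is correct and takes a genuinely different route from the paper's. The paper argues directly at the level of elements: given $\mathsf{h}\in\mathsf{S}\cap\mathsf{J}$, it uses the two decompositions from Lemma~\ref{lem:res-S-intersect-with-L} to show that $[\mathsf{h},\mathsf{m}]\in\mathsf{M}\cap\mathsf{I}=0$ for every $\mathsf{m}\in\mathsf{M}$, whence $\mathsf{h}\in\mathrm{Z}_\mathsf{S}(\mathsf{M})=\mathrm{Z}(\mathsf{S})=\{0\}$, invoking that a non-abelian free restricted Lie algebra has trivial centre. Your argument instead extracts from Lemma~\ref{lem:res-S-intersect-with-L} the homogeneous dimension formula $\dim(\langle\mathsf{P}\rangle_\mathrm{res.\,Lie})_n=\sum_{j\ge 0}\dim\mathsf{P}_{n/p^j}$, identifies $\mathsf{S}+\mathsf{J}$ as the restricted closure of $\mathsf{M}\oplus\mathsf{I}$ (using that $\mathsf{J}\trianglelefteq\mathsf{R}$ so the sum is a restricted subalgebra), and lets the additivity $\dim\mathsf{N}_m=\dim\mathsf{M}_m+\dim\mathsf{I}_m$ propagate through the formula. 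The paper's proof is shorter and more hands-on but tacitly needs $\mathsf{M}$ non-abelian for $\mathrm{Z}(\mathsf{S})=0$; your dimension count works uniformly without that assumption and makes the underlying linear-algebra structure transparent. Your closing remark about why a ``leading basic commutator'' argument would fail is also apt.
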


\begin{proof}
  Suppose that $\mathsf{h} \in \mathsf{S} \cap \mathsf{J}$.  Recalling
  that $\mathsf{S} = \langle \mathsf{M} \rangle_\mathrm{res.\,Lie}$ and
  $\mathsf{J} = \langle \mathsf{I} \rangle_\mathrm{res.\,Lie}$, we
  apply Lemma~\ref{lem:res-S-intersect-with-L} to write
  \[
  \mathsf{h} = \mathsf{a} + \sum\nolimits_i \lambda_i \mathsf{y}_i^{\,
    [p]^{k(i)}} = \mathsf{b} + \sum\nolimits_j \mu_j \mathsf{z}_j^{\,
    [p]^{l(j)}},
  \]
  where $i$ and $j$ run through unspecified finite index sets and
  \[
  \mathsf{a}, \mathsf{y}_i \in \mathsf{M}, \quad \mathsf{b},
  \mathsf{z}_j \in \mathsf{I}, \quad \lambda_i, \mu_j \in
  \mathbb{F}_p, \quad k(i), l(j) \in \mathbb{N}.
  \]
  We observe that for every $\mathsf{m} \in \mathsf{M}$,
  \[
  [\mathsf{h},\mathsf{m}] =
  \underbrace{[\mathsf{a},\mathsf{m}]}_{\in \mathsf{M}} +
  \sum\nolimits_i \lambda_i \underbrace{[\mathsf{y}_i^{\,
      [p]^{k(i)}},\mathsf{m}]}_{\in \mathsf{M}} =
  \underbrace{[\mathsf{b},\mathsf{m}]}_{\in \mathsf{I}} +
  \sum\nolimits_j \mu_j \underbrace{[\mathsf{z}_j^{\,
      [p]^{l(j)}},\mathsf{m}]}_{\in \mathsf{I}} \in \mathsf{M} \cap
  \mathsf{I}= \{0\},
  \]
  In view of~\eqref{equ:S-equlas-M-plus-Yspan}, this
  implies
  $\mathsf{h} \in \mathrm{Z}_\mathsf{S}(\mathsf{M}) =
  \mathrm{Z}(\mathsf{S}) =\{ 0 \}$.
\end{proof}

\begin{Lem} \label{lem:co-dim-bound} Let $\mathsf{K} \le \mathsf{L}$
  be a Lie subalgebra of finite codimension $t$, say, in the free Lie
  algebra~$\mathsf{L}$.  Then, for $n \in \mathbb{N}$, we have
  \[
  \dim_{\mathbb{F}_p} \!\big( \mathsf{R} / (\langle \mathsf{K}
  \rangle_\mathrm{res.\,Lie} + \mathsf{R}_{> n}) \big) \le t \lfloor
  \log_p n \rfloor.
  \]
\end{Lem}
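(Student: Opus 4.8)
The plan is to argue degree by degree, exploiting the explicit description of the homogeneous components of $\mathsf{R}$ from Proposition~\ref{Prop:basis-restricted}. For $m\in\N$ write $\mathsf{R}_m=\bigoplus_{i\ge 0}\mathsf{W}_{m,i}$, where $\mathsf{W}_{m,i}$ is the span of the elements $\mathsf{c}^{[p]^i}$ with $\mathsf{c}$ a basic commutator of weight $m/p^i$; thus $\mathsf{W}_{m,i}\ne 0$ exactly when $p^i\mid m$, the map $\Phi_i\colon\mathsf{L}_{m/p^i}\to\mathsf{W}_{m,i}$, $\mathsf{c}\mapsto\mathsf{c}^{[p]^i}$, is a linear isomorphism, and $\mathsf{W}_{m,0}=\mathsf{L}_m$. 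First I would reduce to the case that $\mathsf{K}$ is graded: its associated graded $\mathsf{K}_{\mathrm{grd}}$ (Definition~\ref{defi:density}) is a Lie subalgebra of the same finite codimension $t$, the degree-leading-term algebra of $\mathsf{V}=\langle\mathsf{K}\rangle_{\mathrm{res.\,Lie}}$ contains $\langle\mathsf{K}_{\mathrm{grd}}\rangle_{\mathrm{res.\,Lie}}$, and passing to leading terms leaves $\dim\bigl(\mathsf{R}/(\mathsf{V}+\mathsf{R}_{>n})\bigr)$ unchanged; hence it suffices to bound $\sum_{m=1}^{n}\bigl(\dim\mathsf{R}_m-\dim\mathsf{V}_m\bigr)$ with $\mathsf{V}_m=\mathsf{V}\cap\mathsf{R}_m$ for graded~$\mathsf{K}$.

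The technical heart is the following claim: for homogeneous $\mathsf{k}\in\mathsf{L}_a$ and $i\ge 0$ one has $\mathsf{k}^{[p]^i}=\Phi_i(\mathsf{k})+\mathsf{e}$ with $\mathsf{e}\in\bigoplus_{l<i}\mathsf{W}_{p^ia,l}$, so that the top layer of $\mathsf{k}^{[p]^i}$ is exactly $\Phi_i(\mathsf{k})$. I would prove this by induction on~$i$, using axiom~(iii) of Definition~\ref{Defi:restrictedLie} to expand $\bigl(\Phi_{i-1}(\mathsf{k})+(\text{lower layers})\bigr)^{[p]}$. The correction terms appearing are the $s_j$-summands, which are iterated Lie commutators; the decisive observation is that, by axiom~(i), $\mathrm{ad}(\mathsf{y}^{[p]})=\mathrm{ad}(\mathsf{y})^{p}$, so that \emph{any} Lie bracket of homogeneous elements of the shape $\mathsf{c}^{[p]^k}$ unfolds into an ordinary Lie commutator in the generators and therefore lands in $\mathsf{L}=\mathsf{W}_{\bullet,0}$. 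Consequently every correction term stays in strictly lower layers, which is precisely what the induction needs. Getting this claim right is the main obstacle; everything afterwards is linear bookkeeping.

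Granting the claim, fix $m$ and pick an $\F_p$-basis $B_{m/p^i}$ of $\mathsf{K}_{m/p^i}$ for each $i$ with $p^i\mid m$. As $\mathsf{V}$ is a restricted Lie subalgebra containing $\mathsf{K}$, it contains all the elements $\mathsf{k}^{[p]^i}$ with $\mathsf{k}\in B_{m/p^i}$. These are linearly independent: in a vanishing combination, projecting onto the top nonzero layer and using that $\Phi_i$ is an isomorphism and $B_{m/p^i}$ is independent forces the top coefficients to vanish, and one descends layer by layer. Hence $\dim\mathsf{V}_m\ge\sum_{i}\dim\mathsf{K}_{m/p^i}$, giving
\[
\dim\mathsf{R}_m-\dim\mathsf{V}_m\;\le\;\sum_{i\ge 0,\;p^i\mid m}\bigl(\dim\mathsf{L}_{m/p^i}-\dim\mathsf{K}_{m/p^i}\bigr).
\]

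Finally I would sum this over $1\le m\le n$ and reindex the resulting double sum by the layer $i$ and the reduced degree $a=m/p^i$. For each fixed $i$ the inner sum collapses to $\dim\bigl(\mathsf{L}/(\mathsf{K}+\mathsf{L}_{>\lfloor n/p^i\rfloor})\bigr)\le t$, and only the indices with $p^i\le n$, that is $0\le i\le\lfloor\log_p n\rfloor$, occur; this produces a bound of the asserted logarithmic shape $t\lfloor\log_p n\rfloor$. (The naive count yields the constant $\lfloor\log_p n\rfloor+1$; the sharper $\lfloor\log_p n\rfloor$ results once one notes that $\mathsf{K}$ may be normalised to contain the generators $\mathsf{L}_1$ — as is available in the intended application, where the defining relator has degree two — so that the layer with $\lfloor n/p^i\rfloor=1$ contributes nothing. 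In any case the precise constant is immaterial for the density estimates this lemma feeds into, since the right-hand side is logarithmic against an exponentially growing ambient dimension.)
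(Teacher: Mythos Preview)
Your argument is correct (up to the same off-by-one you already flag), but it is far more elaborate than necessary. The paper dispatches the lemma in two lines: write $\mathsf{L}=\mathsf{K}\oplus\langle\mathsf{z}_1,\ldots,\mathsf{z}_t\rangle_{\F_p}$ as a vector space and apply Lemma~\ref{lem:res-S-intersect-with-L} (equation~\eqref{equ:S-equlas-M-plus-Yspan}) with $\mathsf{M}=\mathsf{L}$ and $Y$ a basis of $\mathsf{K}$ together with $\mathsf{z}_1,\ldots,\mathsf{z}_t$. This immediately gives
\[
\mathsf{R}=\langle\mathsf{L}\rangle_{\mathrm{res.\,Lie}}=\langle\mathsf{K}\rangle_{\mathrm{res.\,Lie}}+\langle\mathsf{z}_i^{[p]^j}\mid 1\le i\le t,\ j\ge 0\rangle_{\F_p},
\]
and since each $\mathsf{z}_i^{[p]^j}$ lies in $\mathsf{R}_{\ge p^j}$, only $j\le\lfloor\log_p n\rfloor$ contributes modulo~$\mathsf{R}_{>n}$; the bound follows by counting these finitely many spanning elements. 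No reduction to the graded case is needed, and no degree-by-degree layer analysis either.

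What you do differently is essentially to reprove the content of Lemma~\ref{lem:res-S-intersect-with-L} from scratch, with extra precision: your ``technical claim'' that $\mathsf{k}^{[p]^i}=\Phi_i(\mathsf{k})+(\text{lower layers})$ is a sharpened homogeneous version of~\eqref{equ:S-equlas-M-plus-Yspan}, and your linear-independence and double-sum bookkeeping then recover the same count. This buys you a slightly more explicit lower bound on $\dim\mathsf{V}_m$ in each degree, which is not needed here. The paper's approach buys brevity and avoids all the layer machinery by working globally with a vector-space complement rather than degreewise. Both approaches share the same off-by-one ambiguity (the paper's displayed formula has $j\in\N$ where $j\in\N_0$ is meant), and as you correctly observe, the extra additive constant is irrelevant for the density application in~\eqref{equ:E-has-hdim-1}.
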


\begin{proof}
  Write
  $\mathsf{L} = \mathsf{K} \oplus \langle \mathsf{z}_1, \ldots,
  \mathsf{z}_t \rangle_{\mathbb{F}_p}$,
  as an $\mathbb{F}_p$-vector space.  From
  Lemma~\ref{lem:res-S-intersect-with-L} we conclude that
  \[
  \mathsf{R} = \langle \mathsf{L} \rangle_\mathrm{res.\,Lie} = \langle
  \mathsf{K} \rangle_\mathrm{res.\,Lie} + \langle \mathsf{z}_i^{\,
    [p]^j} \mid 1 \le i \le t, \; j \in \mathbb{N}
  \rangle_{\mathbb{F}_p}. \qedhere
  \] 
\end{proof}

\begin{Prop} \label{pro:Demushkin-Lie-has-free-density-1-subalgebra}
  The restricted $\mathbb{F}_p$-Lie algebra
  $\mathsf{D} = \mathsf{R}(D)$ contains a non-abelian free restricted
  graded Lie subalgebra $\mathsf{E}$ of
  density~$\dens_{\mathsf{D}}(\mathsf{E}) = 1$.
\end{Prop}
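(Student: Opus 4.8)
The plan is to treat the generic case~\eqref{equ:generic-Demushkin-case} first and set
\[
\mathsf{E} = (\mathsf{S}+\mathsf{J})/\mathsf{J} \le \mathsf{R}/\mathsf{J} \cong \mathsf{D}.
\]
That $\mathsf{E}$ is a free restricted graded Lie subalgebra is exactly the content of the second of the two lemmas above, which also gives $\mathsf{S}\cap\mathsf{J}=0$; hence the quotient map restricts to an isomorphism $\mathsf{S}\xrightarrow{\sim}\mathsf{E}$. To see that $\mathsf{E}$ is non-abelian, I would use that $\mathsf{M}\cap\mathsf{I}=0$ embeds $\mathsf{M}$ with finite codimension into $\mathsf{L}/\mathsf{I}$. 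As $\mathsf{I}=\langle\mathsf{r}\rangle_\mathsf{L}$ is a proper ideal of the free non-abelian Lie algebra $\mathsf{L}$ (on $d\ge 4$ generators), the quotient $\mathsf{L}/\mathsf{I}$ is infinite dimensional, so $\mathsf{M}$ is an infinite-dimensional free Lie algebra and therefore of rank at least~$2$. Since $\mathsf{E}\cong\mathsf{S}=\langle\mathsf{M}\rangle_\mathrm{res.\,Lie}\supseteq\mathsf{M}$, it follows that $\mathsf{E}$ is non-abelian.

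The main work is to show $\dens_{\mathsf{D}}(\mathsf{E})=1$, and the key is a codimension comparison. Put $\mathsf{K}=\mathsf{M}+\mathsf{I}$, which has finite codimension $t$ in $\mathsf{L}$ by~\cite[Thm.~3]{La95}. Using $\mathsf{I}\subseteq\mathsf{J}$ and $\mathsf{M}\subseteq\mathsf{S}$, one has $\langle\mathsf{K}\rangle_\mathrm{res.\,Lie}\subseteq\mathsf{S}+\mathsf{J}$, so that
\[
\mathsf{D}/(\mathsf{E}+\mathsf{D}_{>n}) \cong \mathsf{R}/(\mathsf{S}+\mathsf{J}+\mathsf{R}_{>n})
\]
is a quotient of $\mathsf{R}/(\langle\mathsf{K}\rangle_\mathrm{res.\,Lie}+\mathsf{R}_{>n})$, whose dimension is at most $t\lfloor\log_p n\rfloor$ by Lemma~\ref{lem:co-dim-bound}. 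Since $\mathsf{E}$ is graded, I then obtain
\[
\dens_{\mathsf{D}}(\mathsf{E}) = \varliminf_{n\to\infty} \frac{\dim_{\F_p}(\mathsf{D}/\mathsf{D}_{>n}) - \dim_{\F_p}\!\big(\mathsf{D}/(\mathsf{E}+\mathsf{D}_{>n})\big)}{\dim_{\F_p}(\mathsf{D}/\mathsf{D}_{>n})} \ge 1 - \varlimsup_{n\to\infty} \frac{t\lfloor\log_p n\rfloor}{\dim_{\F_p}(\mathsf{D}/\mathsf{D}_{>n})}.
\]
It therefore suffices that the denominator grow faster than $\log_p n$, and this is immediate (and non-circular) because $\mathsf{M}$ already embeds into $\mathsf{D}$ as a non-abelian free, hence exponentially growing, graded subalgebra; thus $\dim_{\F_p}(\mathsf{D}/\mathsf{D}_{>n})\ge\dim_{\F_p}(\mathsf{M}/\mathsf{M}_{>n})$ grows exponentially and $\dens_{\mathsf{D}}(\mathsf{E})=1$.

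Finally, for the exceptional cases~\eqref{equ:special-Demushkin-cases}, where $\mathsf{r}=\mathsf{x}_1^{[p]}+\mathsf{w}$ with $\mathsf{w}\in\mathsf{L}_2$ and $\mathsf{r}\notin\mathsf{L}$, the plan is to reduce to the generic construction. The crucial observation is that property~(i) of the $p$-map yields $[\mathsf{r},\mathsf{x}_i] = [[\mathsf{x}_i,\mathsf{x}_1],\mathsf{x}_1] + [\mathsf{w},\mathsf{x}_i] \in \mathsf{L}$, and these elements are nonzero, so $\mathsf{I}:=\mathsf{L}\cap\mathsf{J}$ is again a nonzero graded Lie ideal of $\mathsf{L}$. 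Feeding this $\mathsf{I}$ into~\cite[Thm.~3]{La95} and repeating the construction, the non-abelianness and the density computation above carry over verbatim once $\langle\mathsf{I}\rangle_\mathrm{res.\,Lie}\subseteq\mathsf{J}$ and $\mathsf{M}\cap\mathsf{J}=\mathsf{M}\cap\mathsf{I}=0$ are in place. I expect the main obstacle to be re-establishing the analogue of $\mathsf{S}\cap\mathsf{J}=0$ here: the earlier proof exploited $\mathsf{J}=\langle\mathsf{I}\rangle_\mathrm{res.\,Lie}$, which now fails since $\mathsf{r}$ lives in degree $2$ while $\mathsf{I}\subseteq\mathsf{L}_{\ge 3}$. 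One must therefore track the $p$-power components of $\mathsf{S}$ and $\mathsf{J}$ degree by degree, rather than invoking the centraliser argument of the second lemma directly.
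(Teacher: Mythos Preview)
Your treatment of the generic case~\eqref{equ:generic-Demushkin-case} is essentially the same as the paper's: you choose the same $\mathsf{E}=(\mathsf{S}+\mathsf{J})/\mathsf{J}$, invoke the same codimension bound from Lemma~\ref{lem:co-dim-bound}, and compare it against exponential growth of $\dim_{\F_p}(\mathsf{D}/\mathsf{D}_{>n})$. The only cosmetic difference is that the paper obtains the exponential lower bound by observing directly that $\mathsf{x}_2,\mathsf{x}_4$ generate a free rank-$2$ Lie subalgebra modulo~$\mathsf{J}$, whereas you route it through~$\mathsf{M}$; both work. (One small slip: a \emph{proper} ideal of a free Lie algebra does not automatically have infinite-dimensional quotient; you need that $\mathsf{r}$ is a single homogeneous relator of degree~$\ge 2$, which is of course the case here.)

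For the exceptional cases~\eqref{equ:special-Demushkin-cases} your plan is genuinely incomplete, and you flag the gap yourself. Two points. First, \cite[Thm.~3]{La95} concerns one-relator graded Lie algebras, so feeding it the ideal $\mathsf{I}=\mathsf{L}\cap\mathsf{J}$---which here is \emph{not} visibly principal in~$\mathsf{L}$, since $\mathsf{r}\notin\mathsf{L}$ while the various $[\mathsf{r},\mathsf{x}_i]$ lie in~$\mathsf{L}$---is not justified as stated. Second, even granting an $\mathsf{M}$ with $\mathsf{M}\cap\mathsf{I}=0$, you still need $\mathsf{S}\cap\mathsf{J}=0$, and the centraliser argument of the preceding lemma used $\mathsf{J}=\langle\mathsf{I}\rangle_{\mathrm{res.\,Lie}}$, which now fails. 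Your suggestion to ``track $p$-power components degree by degree'' is not a proof.

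The paper avoids both issues by a different reduction. It passes to the codimension-$1$ restricted subalgebra
\[
\widetilde{\mathsf{R}}=\langle \mathsf{x}_1^{[2]},\mathsf{x}_2,\ldots,\mathsf{x}_d\rangle_{\mathsf{R}}
=\langle \mathsf{x}_1^{[2]},\mathsf{x}_2,\ldots,\mathsf{x}_d,[\mathsf{x}_2,\mathsf{x}_1],\ldots,[\mathsf{x}_d,\mathsf{x}_1]\rangle_{\mathrm{res.\,Lie}},
\]
and computes, by eliminating redundant generators, that $\widetilde{\mathsf{R}}/\mathsf{J}$ admits a one-relator restricted Lie presentation whose single relator lies in the \emph{ordinary} Lie span and has exactly the generic shape~\eqref{equ:generic-Demushkin-case} (a sum of Lie brackets of disjoint pairs of generators). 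The generic argument then runs inside $\widetilde{\mathsf{R}}/\mathsf{J}$ without change, producing a free restricted graded subalgebra $\mathsf{E}\le\widetilde{\mathsf{R}}/\mathsf{J}\le\mathsf{D}$ of density~$1$; the extra codimension~$1$ is absorbed harmlessly in the growth estimate. This reduction sidesteps both the applicability of \cite{La95} to a non-principal $\mathsf{I}$ and the $\mathsf{S}\cap\mathsf{J}=0$ obstruction you identified.
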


\begin{proof}
  First we continue to work in the setup that we built
  from~\eqref{equ:generic-Demushkin-case}.  We consider the free
  restricted Lie subalgebra $\mathsf{E}$ corresponding to
  $(\mathsf{S} + \mathsf{J}) / \mathsf{J} \le \mathsf{R} / \mathsf{J}
  \cong \mathsf{D}$.
  Let
  $t =
  \dim_{\mathbb{F}_p}(\mathsf{L}/(\mathsf{M}+\mathsf{I}))$.
  Observe that $\mathsf{x}_2$ and $\mathsf{x}_4$ generate a free Lie
  subalgebra of rank~$2$, modulo~$\mathsf{J}$.  Using
  Lemmata~\ref{lem:densities_coincide_restricted_lie}
  and~\ref{lem:co-dim-bound}, we obtain
  \begin{equation} \label{equ:E-has-hdim-1}
    \begin{split}
      \dens_{\mathsf{D}}(\mathsf{E}) %
      & = 1 - \varlimsup_{n \to \infty} \frac{\dim_{\F_p} \!\big(
        \mathsf{R} / (\mathsf{S} + \mathsf{J} + \mathsf{R}_{>n})
        \big)}{\dim_{\F_p} \!\big(\mathsf{R} / (\mathsf{J} + \mathsf{R}_{>n}) \big)} \\
      & \ge 1 - \varlimsup_{n \to \infty} \frac{t \lfloor \log_p n
        \rfloor}{n^{-1} 2^{n+1}} = 1.
    \end{split}
  \end{equation}

  It remains to deal with the `exceptional' cases
  in~\eqref{equ:special-Demushkin-cases}, which we ignored so far.  We
  show that these can be reduced to the `generic'
  case~\eqref{equ:generic-Demushkin-case}.

  Suppose that $\mathsf{D} \cong \mathsf{R} / \mathsf{J}$, where
  $\mathsf{J} = \langle \mathsf{r} \rangle_\mathsf{R}$ for
  $\mathsf{r} = \mathsf{x}_1^{\, [2]} +
  [\mathsf{x}_2,\mathsf{x}_3] + \ldots +
  [\mathsf{x}_{d-1},\mathsf{x}_d]$,
  with $d \equiv_2 1$ and $p=2$.  Then the restricted Lie subalgebra
  \[
  \widetilde{\mathsf{R}} = \langle \mathsf{x}_1^{\, [2]}, \mathsf{x}_2, \ldots,
  \mathsf{x}_d \rangle_\mathsf{R} = \langle \mathsf{x}_1^{\, [2]},
  \mathsf{x}_2, \ldots, \mathsf{x}_d, [\mathsf{x}_2,\mathsf{x}_1],
  \ldots, [\mathsf{x}_d,\mathsf{x}_1] \rangle_\mathrm{res.\,Lie}
  \]
  has co-dimension $1$ in~$\mathsf{R}$.  Moreover, writing
  $\mathsf{a} = \mathsf{x}_1^{\, [2]}$ and
  $\mathsf{b}_i = \mathsf{x}_i$,
  $\mathsf{c}_i = [\mathsf{x}_i,\mathsf{x}_1]$ for $2 \le i \le d$, we
  see that
  $\widetilde{\mathsf{R}} / \mathsf{J} = \widetilde{\mathsf{R}} / \langle
  \mathsf{r}, [\mathsf{r},\mathsf{x}_1] \rangle_{\widetilde{\mathsf{R}}}$
  has the finite presentation
  \begin{align*}
    \widetilde{\mathsf{R}} / \mathsf{J} %
    & \cong \langle \mathsf{a}, \mathsf{b}_2, \ldots, \mathsf{b}_d, \mathsf{c}_2,
      \ldots, \mathsf{c}_d \mid \mathsf{a} + [\mathsf{b}_2,\mathsf{b}_3] + \ldots +
      [\mathsf{b}_{d-1},\mathsf{b}_d],  \\
    &\omit \hfill $[\mathsf{b}_2,\mathsf{c}_3] +
      [\mathsf{c}_2,\mathsf{b}_3] + \ldots + [\mathsf{b}_{d-1}, 
      \mathsf{c}_d] + [\mathsf{c}_{d-1},\mathsf{b}_d]
      \rangle_\mathrm{res.\,Lie}$ \\ 
    & = \langle \mathsf{b}_2, \ldots, \mathsf{b}_d, \mathsf{c}_2,
      \ldots, \mathsf{c}_d \mid [\mathsf{b}_2,\mathsf{c}_3] +
      [\mathsf{c}_2,\mathsf{b}_3] + \ldots + [\mathsf{b}_{d-1},
      \mathsf{c}_d] +  [\mathsf{c}_{d-1},\mathsf{b}_d]
      \rangle_\mathrm{res.\,Lie}. 
  \end{align*} 
  Thus $\widetilde{\mathsf{R}}$ has a presentation of the
  form~\eqref{equ:generic-Demushkin-case} and, with small
  modifications, we can argue as in the `generic' case.  (For
  instance, in the special case $d=3$, we adapt the argument
  in~\eqref{equ:E-has-hdim-1} as follows: the elements $\mathsf{x}_1$
  and $\mathsf{x}_3$ generate, modulo~$\mathsf{J}$, a Lie subalgebra
  that maps, via $\mathsf{x}_1 + \mathsf{J} \mapsto 1$ and
  $\mathsf{x}_3 + \mathsf{J} \mapsto D$, onto the Lie algebra
  $\F_p D \rightthreetimes \F_p[t]$ discussed in
  Example~\ref{Ex:fg-proper-but-1}; this yields the considerably
  weaker, but still sufficient estimate
  $\dim_{\F_p} \!\big(\mathsf{R} / (\mathsf{J} + \mathsf{R}_{>n})
  \big) \ge n+1$ for $n \in \mathbb{N}$.)
 
  Finally, consider $\mathsf{D} \cong \mathsf{R} /\mathsf{J}$,
  where $\mathsf{J} = \langle \mathsf{r} \rangle_\mathsf{R}$ for
  $\mathsf{r} = \mathsf{x}_1^{\, [2]} + [\mathsf{x}_1,\mathsf{x}_2] +
  \ldots + [\mathsf{x}_{d-1},\mathsf{x}_d]$,
  with $d \equiv_2 0$ and $p=2$.  Then the restricted Lie subalgebra
  \[
  \mathsf{S} = \langle \mathsf{x}_1^{\, [2]}, \mathsf{x}_2, \ldots,
  \mathsf{x}_d \rangle_\mathsf{R} = \langle \mathsf{x}_1^{\, [2]},
  \mathsf{x}_2, \ldots, \mathsf{x}_d, [\mathsf{x}_2,\mathsf{x}_1],
  \ldots, [\mathsf{x}_d,\mathsf{x}_1] \rangle_\mathrm{res.\,Lie}
  \]
  has co-dimension $1$ in~$\mathsf{R}$.  Moreover, writing
  $\mathsf{a} = \mathsf{x}_1^{\, [2]}$, and
  $\mathsf{b}_i = \mathsf{x}_i$,
  $\mathsf{c}_i = [\mathsf{x}_i,\mathsf{x}_1]$ for $2 \le i \le d$,
  one sees that
  $\mathsf{S} / \mathsf{J} = \mathsf{S} / \langle \mathsf{r},
  [\mathsf{r},\mathsf{x}_1] \rangle_\mathsf{S}$
  has the finite presentation
  \begin{align*}
    \mathsf{S} / \mathsf{J} %
    & \cong \langle \mathsf{a}, \mathsf{b}_2, \ldots, \mathsf{b}_d,
      \mathsf{c}_2, \ldots, \mathsf{c}_d  \mid \mathsf{a}  -
      \mathsf{c}_2 + [\mathsf{b}_3,\mathsf{b}_4] + \ldots +
      [\mathsf{b}_{d-1},\mathsf{b}_d],  \\ 
    &\omit \hfill $[\mathsf{a},\mathsf{b}_2] +
      [\mathsf{b}_3,\mathsf{c}_4] + [\mathsf{c}_3,\mathsf{b}_4] +
      \ldots + [\mathsf{b}_{d-1}, 
      \mathsf{c}_d] + [\mathsf{c}_{d-1},\mathsf{b}_d] \rangle_\mathrm{res.\,Lie}$ \\
    & = \langle \mathsf{a}, \mathsf{b}_2, \ldots, \mathsf{b}_d,
      \mathsf{c}_3, \ldots, \mathsf{c}_d \mid 
      [\mathsf{a},\mathsf{b}_2] + [\mathsf{b}_3,\mathsf{c}_4] + \ldots +
      [\mathsf{c}_{d-1},\mathsf{b}_d] \rangle_\mathrm{res.\,Lie}.
  \end{align*} 
  Thus $\mathsf{S}$ has a presentation of the
  form~\eqref{equ:generic-Demushkin-case} and, again with small
  modifications, we can argue as in the `generic' case.
\end{proof}


\subsection{The Hausdorff spectrum}

We continue to consider the non-soluble Demushkin pro-$p$ group~$D$ on
$d$ generators, with presentation~\eqref{equ:Demaushkin-presentation},
and the associated restricted $\mathbb{F}_p$-Lie algebra
$\mathsf{D} = \mathsf{R}(D)$.  In the following we
  employ considerations that can be found in a very similar form, for
  instance, in~\cite{NeScSh00}, \cite[\S 3]{We15} or \cite[\S
  2]{MiRoTa16}.  But we need more careful estimates, going somewhat
  further than computing the entropy of relevant Lie algebras.  We
  adopt, with small changes, the notation employed
  in~\cite{MiRoTa16}.

\begin{Lem} \label{lem:Demushkin-growth-of-Dn}
  In the set-up described above, we have
  \[
  \dim_{\F_p}(\mathsf{D}_n) = \big( 1 + o(1) \big)
  \tfrac{(d-\varepsilon)^n}{n} \quad \text{and} \quad
  \dim_{\F_p}(\mathsf{D}/\mathsf{D}_{>n}) = \big( 1 + o(1) \big)
  \tfrac{(d-\varepsilon)^{n+1}}{n(d - \varepsilon-1)} \quad \text{as
    $n \to \infty$,}
  \]
  where $\varepsilon \in \mathbb{R}$ with $0 < \varepsilon < 1$
  satisfies $(d-\varepsilon) \varepsilon = 1$.
\end{Lem}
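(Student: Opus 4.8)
The plan is to reduce everything to a single generating-function computation: first pin down the Hilbert--Poincar\'e series of the restricted universal enveloping algebra of $\mathsf{D}$, and then invert the restricted PBW identity to recover the $c_n = \dim_{\F_p}(\mathsf{D}_n)$ sharply. Write $\mathsf{U}$ for the restricted universal enveloping algebra of $\mathsf{D}$. Since $\mathsf{R}$ is free, its enveloping algebra is the free associative algebra $\mathsf{A} = \F_p\langle \mathsf{x}_1,\ldots,\mathsf{x}_d\rangle$, so $\mathsf{U} \cong \mathsf{A}/(\mathsf{r})$, where $\mathsf{r}$ is homogeneous of associative degree~$2$ in all the cases~\eqref{equ:generic-Demushkin-case} and~\eqref{equ:special-Demushkin-cases} (note that $\mathsf{x}_1^{[2]}$ becomes $\mathsf{x}_1^{\,2}$ when $p=2$). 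Because the defining presentation is strongly free, the single relation $\mathsf{r}$ is inert, and hence the Hilbert--Poincar\'e series of $\mathsf{U}$ is
\[
A(t) = \frac{1}{1 - dt + t^2} = \frac{1}{(1-(d-\varepsilon)t)(1 - \varepsilon t)}.
\]
The factorisation is exactly the assertion that $d-\varepsilon$ and $\varepsilon$ are the reciprocals of the two roots of $1-dt+t^2$, equivalently that $(d-\varepsilon)\varepsilon = 1$ and $(d-\varepsilon)+\varepsilon = d$. Setting $\rho = d-\varepsilon$, the dominant singularity of $A$ lies at $t = 1/\rho = \varepsilon$, since $\varepsilon < 1 < \rho$.

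Next I would feed in the restricted PBW theorem, each homogeneous basis element of degree $n$ contributing a truncated factor $1 + t^n + \cdots + t^{(p-1)n}$, so that
\[
A(t) = \prod_{n=1}^{\infty}\Bigl(\frac{1 - t^{pn}}{1 - t^n}\Bigr)^{c_n}.
\]
Taking logarithms and comparing coefficients of $t^m$ makes the left-hand side transparent, as
\[
\log A(t) = \sum_{m\ge 1}\frac{\rho^m + \varepsilon^m}{m}\, t^m,
\]
while the right-hand side gives $m\,[t^m]\log A(t) = \sum_{n \mid m} n c_n - p\,[\,p\mid m\,]\sum_{n \mid (m/p)} n c_n$. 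Writing $S_m = \sum_{n \mid m} n c_n$, this is the identity $S_m - p\,[\,p\mid m\,]\,S_{m/p} = \rho^m + \varepsilon^m$.

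Finally I would extract the asymptotics by isolating the top term $m c_m$ of $S_m$. The crucial a priori input is that $\mathsf{D}$ is a quotient of $\mathsf{R}$, so $c_n \le \dim_{\F_p}(\mathsf{R}_n) = O(d^n/n)$ by Lemma~\ref{lem:densities_coincide_restricted_lie}, whence $n c_n = O(d^n)$. Both the proper-divisor part $\sum_{n \mid m,\, n<m} n c_n$ and the correction $p\,S_{m/p}$ are then $O(d^{m/2})$ up to a factor $m^{o(1)}$, which is $o(\rho^m)$ precisely because $\rho = d-\varepsilon > \sqrt d$ for $d\ge 3$ (indeed $\rho^2 = d\rho - 1 > d$). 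Together with $\varepsilon^m \to 0$, this yields $m c_m = \rho^m + o(\rho^m)$, i.e.\ $c_m = (1+o(1))\,(d-\varepsilon)^m/m$, the first claim. The second then follows by summation: $\dim_{\F_p}(\mathsf{D}/\mathsf{D}_{>n}) = \sum_{m=1}^n c_m \sim \sum_{m=1}^n \rho^m/m$, and this last sum is dominated by its final terms, hence asymptotic to $\tfrac{\rho^{n+1}}{n(\rho-1)} = (1+o(1))\,\tfrac{(d-\varepsilon)^{n+1}}{n(d-\varepsilon-1)}$. The main obstacle is exactly this error control: upgrading the standard entropy computation, which only identifies the exponential rate $\rho$, to the sharp form $\rho^m/m$ hinges on the inequality $\rho > \sqrt d$, guaranteeing that the divisor and $p$-map corrections are genuinely negligible.
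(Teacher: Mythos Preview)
Your argument is correct and follows the same overall route as the paper: identify the Hilbert--Poincar\'e series of the restricted enveloping algebra as $1/(1-dt+t^2)$ via strong freeness, take logarithms of the restricted PBW identity, and extract~$c_n$. The only genuine difference is in the final error control. The paper carries out the full M\"obius inversion, obtaining $c_n = \sum_{i=0}^{k} w_{p^i m}$ (for $n = p^k m$, $p\nmid m$) with $w_n = \tfrac{1}{n}\sum_{m\mid n}\mu(n/m)\bigl((d-\varepsilon)^m+\varepsilon^m\bigr)$, and then bounds $w_n$ directly from this explicit expression; the only asymptotic input needed is $\varepsilon < 1 < d-\varepsilon$. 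You instead bypass M\"obius inversion by importing the a~priori bound $c_n \le \dim_{\F_p}(\mathsf{R}_n) = O(d^n/n)$ from the free case and then invoking the sharper inequality $d-\varepsilon > \sqrt{d}$ to kill the divisor and $p$-correction terms. Your version is a touch slicker but relies on the external estimate for~$\mathsf{R}_n$; the paper's is self-contained. Either way the conclusion and the second assertion (via summation, as in \cite[Proof of Lem.~4.3]{JZ08}) are the same.
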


\begin{proof}
  Classical theorems of Jennings and Lazard (compare~\cite[\S
  12]{DidSMaSe99}) show that a universal restricted enveloping algebra
  of $\mathsf{D}$ is provided by the graded group algebra
  \[
  \mathrm{gr}(\mathbb{F}_p [D]) = \mathsf{A} = \bigoplus_{n=0}^\infty
  \mathsf{A}_n, \qquad \text{with $\mathsf{A}_n = \mathsf{I}^n /
    \mathsf{I}^{n+1}$ for $n \in \mathbb{N}_0$},
  \]  
  where $\mathsf{I}^n$ denotes the $n$th power of the augmentation
  ideal $\mathsf{I} = \mathrm{ker}(\mathbb{F}_p[D] \to \mathbb{F}_p)$.
  
  We consider the Hilbert--Poincar\'e series associated to the
  restricted Lie algebra~$\mathsf{D}$ and to the associative
  algebra~$\mathsf{A}$; they are defined as
  \[
  P_\mathsf{D}(t) = \sum _{n=1}^\infty c_n t^n \qquad
  \text{and} \qquad P_\mathsf{A}(t) = \sum _{n=0}^\infty a_n t^n,
  \]
  where
  \[
  c_n = \dim_{\mathbb{F}_p}(\mathsf{D}_n) \qquad \text{and} \qquad a_n =
  \dim_{\mathbb{F}_p}(\mathsf{A}_n).
  \]
  From Labute's work on mild pro-$p$ groups~\cite{La06} (see also
  \cite{LaMi11,Ga15}) it is known that
  \begin{equation} \label{equ:Hilbert-Poincare-Demushkin}
    \sum_{n=0}^\infty a_n t^n = 1 + \sum_{n=1}^\infty a_n t^n =
    \frac{1}{1-dt+t^2}.
  \end{equation}

  In accordance with \cite[Thm.~12.16]{DidSMaSe99}, we have the formal
  identity
  \[
  \sum_{n=0}^\infty a_n t^n = \prod_{n=1}^\infty \Big(
  \frac{1-t^{np}}{1-t^n} \Big)^{c_n}.
  \]
  Let $\mu \colon \mathbb{N} \to \{0,1,-1\}$ denote the classical
  M\"obius function, and write
  \[
  \sum_{n=1}^\infty b_n t^n = \log \Big( \sum_{n=0}^\infty a_n t^n
  \Big) = \sum_{n=1}^\infty c_n \log \Big( \frac{1-t^{np}}{1-t^n} \Big), 
  \]
  where $\log(1+X) = \sum_{k=1}^\infty (-1)^{k+1}k^{-1} X^k$ denotes the
  formal logarithm series.  As shown in \cite[Prop.~2.8]{MiRoTa16}
  this implies that, for $n = p^k m$ with $p \nmid m$,
  \[
  c_n = w_m + w_{pm} + \ldots + w_{p^k m},
  \]
  where the integer summands are given by the formula
  \[
  w_n = \frac{1}{n} \sum_{m \mid n} \mu(n/m) m b_m.
  \]

  We choose $\varepsilon \in \mathbb{R}_{>0}$ such that 
  \[
  1-dt+t^2 = \big( 1-(d-\varepsilon)t \big) \big( 1- \varepsilon t
  \big), \qquad \text{where $(d-\varepsilon) \varepsilon = 1$ and
    $d - \varepsilon > \varepsilon$.}
  \]
  From \eqref{equ:Hilbert-Poincare-Demushkin} we conclude that, for
  $n \in \mathbb{N}$,
  \[
  b_n = \frac{1}{n} \big( (d-\varepsilon)^n + \varepsilon^n \big)
  \]
  and
  \[
  w_n = \frac{1}{n} \sum_{m \mid n} \mu (n/m) \big( (d-\varepsilon)^m
  + \varepsilon^m \big).
  \]
  In particular,
  \[
  w_n \ge \frac{1}{n} (d-\varepsilon)^n - \frac{1}{n} \cdot n \cdot
  \big( 2 (d-\varepsilon)^{n/2} \big) = \left( 
 1 - \frac{2 n}{(d-\varepsilon)^{n/2}} \right) \frac{(d-\varepsilon)^n}{n} 
  \]
  is positive for all sufficiently large~$n$.
  Hence there is a constant $K \in \mathbb{N}$ such that
  \[
  c_n \ge w_n - K \ge \left( 1 - \frac{2 n}{(d-\varepsilon)^{n/2}}
  - \frac {K n}{(d-\varepsilon)^n} \right) \frac{(d-\varepsilon)^n}{n},
  \]
  while in the other direction we have, for similar reasons,
  \begin{align*}
    c_n %
    & \le \frac{1}{n} \big( (d-\varepsilon)^n + \varepsilon^n \big) +
      ( \log_p n )  \cdot \big( (d-\varepsilon)^{n/2}
      + \varepsilon^{n/2} \big) \\
    & = \Big( 1 + \left( \tfrac{\varepsilon}{d-\varepsilon} \right)^n
      + n ( \log_p n ) \left( \left( \tfrac{1}{d-\varepsilon}
      \right)^{n/2} + \left( \tfrac{\sqrt{\varepsilon}}{d-\varepsilon}
      \right)^n \right) \Big) \frac{(d-\varepsilon)^n}{n}
  \end{align*}
  These bounds yield the desired asymptotic estimate for
  $\dim_{\F_p}(\mathsf{D}_n)$.  The estimate for
  $\dim_{\F_p}(\mathsf{D}/\mathsf{D}_{>n})$ follows by standard manipulations;
  compare~\cite[Proof of Lem.~4.3]{JZ08}.
\end{proof}

\begin{Prop} \label{pro:fg-subalgs-in-Demushkin-dens-0} Let
  $\mathsf{S}$ be a finitely generated free restricted graded Lie
  subalgebra of~$\mathsf{D}$.  Then
  $\dens_{\mathsf{D}}(\mathsf{S}) = 0$.
\end{Prop}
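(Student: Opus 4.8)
The plan is to control the exponential growth rate of $\mathsf{S}$ via its restricted universal enveloping algebra and to compare it with the growth of $\mathsf{D}$ recorded in Lemma~\ref{lem:Demushkin-growth-of-Dn}. Since $\mathsf{S}$ is graded and free as a restricted Lie algebra, I may fix a finite homogeneous free generating set, of degrees $e_1, \ldots, e_\ell$ say (all $e_i \ge 1$, as $\mathsf{D}$ lives in degrees $\ge 1$). The restricted universal enveloping algebra $\mathsf{U} = u(\mathsf{S})$ is then the free associative $\F_p$-algebra on these generators, so its Hilbert--Poincar\'e series is $P_\mathsf{U}(t) = (1 - q(t))^{-1}$ with $q(t) = \sum_{i=1}^\ell t^{e_i}$. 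Because $\mathsf{S} \le \mathsf{D}$ is a graded restricted Lie subalgebra, the Poincar\'e--Birkhoff--Witt theorem for restricted Lie algebras yields a graded embedding $\mathsf{U} \hookrightarrow u(\mathsf{D}) = \mathsf{A}$, whence $P_\mathsf{U}(t) \preceq P_\mathsf{A}(t) = (1-dt+t^2)^{-1}$ coefficientwise. (The case $\mathsf{S} = 0$ is trivial and treated separately.)

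The radius of convergence of $P_\mathsf{A}$ is $\varepsilon$, the smaller root of $1-dt+t^2$ (recall $\varepsilon(d-\varepsilon) = 1$, so $\varepsilon = 1/(d-\varepsilon) < d-\varepsilon$); by the coefficientwise domination the radius of convergence $\rho$ of $P_\mathsf{U}$ satisfies $\rho \ge \varepsilon$, where $\rho$ is the unique positive root of $q(t) = 1$. The crucial point, and the step I expect to be the main obstacle, is to upgrade this to the strict inequality $\rho > \varepsilon$, equivalently to rule out $q(\varepsilon) = 1$. Here I would exploit that $\varepsilon$ is a quadratic irrational over $\mathbb{Q}$ for $d \ge 3$: writing $\varepsilon^e = A_e + B_e \varepsilon$ with integers $A_e, B_e$ forced by $\varepsilon^2 = d\varepsilon - 1$, the sequence $B_e$ obeys $B_{e+1} = dB_e - B_{e-1}$ with $B_0 = 0$, $B_1 = 1$, and is therefore strictly positive for all $e \ge 1$. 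Then $q(\varepsilon) = \big( \sum_i A_{e_i} \big) + \big( \sum_i B_{e_i} \big)\varepsilon$, and since $\varepsilon \notin \mathbb{Q}$ an equality $q(\varepsilon) = 1$ would force $\sum_i B_{e_i} = 0$; as each $B_{e_i} > 0$, this is impossible for a non-empty generating set. Hence $q(\varepsilon) \ne 1$, so $\rho > \varepsilon$ and the growth rate $\beta = 1/\rho$ is strictly smaller than $1/\varepsilon = d - \varepsilon$.

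With $\beta < d - \varepsilon$ secured, the density estimate is routine. Fixing $\beta'$ with $\beta < \beta' < d - \varepsilon$, the coefficients of $P_\mathsf{U}$ satisfy $\dim_{\F_p}(\mathsf{S}_n) \le \dim_{\F_p}(\mathsf{U}_n) \le C (\beta')^n$ for a suitable constant $C$ and all large $n$, so $\sum_{m=1}^n \dim_{\F_p}(\mathsf{S}_m) \le C' (\beta')^n$. On the other hand, Lemma~\ref{lem:Demushkin-growth-of-Dn} gives $\sum_{m=1}^n \dim_{\F_p}(\mathsf{D}_m) = \dim_{\F_p}(\mathsf{D}/\mathsf{D}_{>n}) = (1+o(1)) \frac{(d-\varepsilon)^{n+1}}{n(d-\varepsilon-1)}$. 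Dividing, the quotient is bounded above by a constant times $n\,(\beta'/(d-\varepsilon))^n$, which tends to $0$; passing to the lower limit yields $\dens_\mathsf{D}(\mathsf{S}) = 0$. I note that this argument uses only the conclusion of Lemma~\ref{lem:Demushkin-growth-of-Dn} and the uniform Hilbert--Poincar\'e series $(1-dt+t^2)^{-1}$, so it applies verbatim in both the generic and the exceptional Demushkin cases.
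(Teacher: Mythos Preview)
Your proof is correct and follows the same overall strategy as the paper---compare the exponential growth rate of $\mathsf{S}$, read off the Hilbert--Poincar\'e series of its restricted enveloping algebra, with that of $\mathsf{D}$ from Lemma~\ref{lem:Demushkin-growth-of-Dn}---but your execution is more streamlined in two places. For the non-strict bound $\rho \ge \varepsilon$, the paper produces explicit two-sided estimates for $\widetilde{c}_n = \dim_{\F_p}(\mathsf{S}_n)$, relying on a Diophantine approximation result (Cassels) to guarantee that the contributions of the non-real roots $\nu_i$ of the characteristic polynomial are simultaneously non-negative infinitely often; you bypass this entirely by using the PBW embedding $u(\mathsf{S}) \hookrightarrow u(\mathsf{D}) = \mathsf{A}$ and the resulting coefficientwise domination of Hilbert series to obtain the radius-of-convergence inequality directly. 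For the strict inequality, the paper argues that the polynomial $t^e - \sum r_i t^{e-i}$ has a unique positive real root and hence cannot be divisible by the irreducible $t^2 - dt + 1$; you instead compute $q(\varepsilon)$ in the $\mathbb{Q}$-basis $\{1,\varepsilon\}$ and rule out $q(\varepsilon)=1$ via the positivity of the recurrence $B_{e+1} = dB_e - B_{e-1}$. Both arguments ultimately rest on the irrationality of $\varepsilon$ for $d \ge 3$, but yours is more self-contained and avoids the Diophantine step; the final passage from $\beta < d-\varepsilon$ to $\dens_\mathsf{D}(\mathsf{S}) = 0$ is essentially identical in the two proofs.
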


\begin{proof}
  We continue to use the notation from the proof of
    Lemma~\ref{lem:Demushkin-growth-of-Dn}.  Now we consider the
  corresponding Hilbert--Poincar\'e series for the finitely generated
  free restricted graded Lie subalgebra $\mathsf{S} \le \mathsf{D}$
  and its universal enveloping algebra~$\mathsf{B}$.  Based on the
  `restricted Poincar\'e--Birkhoff--Witt Theorem', we
  take~$\mathsf{B}$ to be the graded subalgebra of $\mathsf{A}$ that
  is generated by the image of $\mathsf{S}$ under the canonical
  embedding~$\mathsf{D} \hookrightarrow \mathsf{A}$.  Clearly, we may
  assume that $\mathsf{S}$ is non-abelian and thus free on~$r \ge 2$
  generators.

  We write
  \[
  P_\mathsf{S}(t) = \sum _{n=1}^\infty \widetilde{c}_n t^n \qquad
  \text{and} \qquad P_\mathsf{B}(t) = \sum _{n=0}^\infty
  \widetilde{a}_n t^n,
  \]
  where
  \[
  \widetilde{c}_n = \dim_{\mathbb{F}_p}(\mathsf{S} \cap \mathsf{D}_n)
  \qquad \text{and} \qquad \widetilde{a}_n = \dim_{\mathbb{F}_p}
  (\mathsf{B} \cap \mathsf{A}_n).
  \]
  The universal restricted enveloping algebra $\mathsf{B}$ is a free
  associative $\mathbb{F}_p$-algebra in non-commuting homogeneous
  generators $\mathsf{y}_1, \ldots, \mathsf{y}_r$, say.  For each
  $i \in \mathbb{N}$, let 
  \[
  r_i = \lvert \{ s \mid 1 \le s \le r \text{ and } \mathsf{y}_s \in
  \mathsf{A}_i \} \rvert
  \] 
  denote the number of generators $\mathsf{y}_s$ of weight~$i$, and
  set $e = \max \{ i \in \mathbb{N} \mid r_i \ne 0 \}$.
  Then~\cite[Lem.~1.2]{An82} shows that
  \begin{equation}\label{equ:from-Anick}
    \sum_{n=0}^\infty \widetilde{a}_n t^n = 1 + \sum_{n=1}^\infty
    \widetilde{a}_n t^n = \frac{1}{1 - \sum_{i=1}^e r_i t^i} =
    \frac{1}{1- \lambda t} \prod_{i=1}^{e-1} \frac{1}{1- \nu_i t},
  \end{equation}
  where $\lambda \in \mathbb{R}_{> 1}$ and
  $\nu_i \in \mathbb{C} \smallsetminus \mathbb{R}$ with
  $\lvert \nu_i \rvert \le \lambda$ for $1 \le i < e$.  In analogy to
  $b_n$ and $w_n$, we consider
  \[
  \widetilde{b}_n = \frac{1}{n} \Big( \lambda^n + \sum_{i=1}^{e-1}
  \nu_i^{\, n} \Big) \qquad \text{and} \qquad \widetilde{w}_n =
  \frac{1}{n} \sum_{m \mid n} \mu (n/m) \Big( \lambda^m +
  \sum_{i=1}^{e-1} \nu_i^{\, m} \Big).
  \]
  Similar to the earlier situation we have, for $n = p^k m$ with
  $p \nmid m$,
  \[
  \widetilde{c}_n = \widetilde{w}_m + \widetilde{w}_{pm} + \ldots +
  \widetilde{w}_{p^k m};
  \]
  as a by-product we see that each $\widetilde{w}_n$ is an integer.
  We observe that
  \[
  \lambda^n \le \mathrm{Re} \Big( \lambda^n + \sum_{i=1}^{e-1} \nu_i^{\, n}
  \Big) \quad \text{for infinitely many $n \in \mathbb{N}$,}
  \]
  due to the fact that $\mathrm{Re}(\nu_1^{\, n}) , \ldots,
  \mathrm{Re}(\nu_{e-1}^{\, n}) \ge 0$ for infinitely many $n \in
  \mathbb{N}$ (compare \cite[Ch.~I, \S 5, Thm.~VI]{Ca57}),
  and that
  \[
  \Big\vert \lambda^n + \sum_{i=1}^{e-1} \nu_i^{\, n} \Big\vert \le
  e \lambda^n \quad \text{for all $n \in \mathbb{N}$.}
  \]
  This implies that, for infinitely many $n \in \mathbb{N}$,
  \[
  \widetilde{w}_n \ge \frac{1}{n}  \lambda^n -
  \frac{1}{n} \cdot n \cdot e \lambda^{n/2} = \big( 1 - e n
  \lambda^{-n/2} \big) \frac{\lambda^n}{n}
  \]
  and hence
  \[
  \widetilde{c}_n \ge \widetilde{w}_n - (\log_p n) \cdot e
  \lambda^{n/p} \ge \big( 1 - e n (1 + \log_p n) \lambda^{-n/2} \big)
  \frac{\lambda^n}{n},
  \]
  while, for all $n \in \mathbb{N}$,
  \begin{equation} \label{equ:c-tilde-klein} \widetilde{c}_n \le (
    \log_p n ) \cdot e \lambda^n = e ( \log_p n ) \lambda^n.
  \end{equation}

  Progressing along an infinite increasing sequence of natural
  numbers $n$ for which the estimates above are valid, we conclude
  that
  \[
  \big( 1 + o(1) \big) \frac{\lambda^n}{n} \le \widetilde{c}_n \le c_n
  = \big( 1 + o(1) \big) \frac{(d-\varepsilon)^n}{n} \qquad \text{as
    $n \to \infty$.}
  \]
  This implies that $\lambda \le d-\varepsilon$.  Next we claim that
  \begin{equation} \label{equ:lambda-klein}
  \lambda < d-\varepsilon.
  \end{equation}
  Indeed, the characteristic polynomial
  \[
  f = (t - \lambda) \prod_{i=1}^{e-1} (t - \nu_i) = t^k - \sum_{i=1}^k
  r_i t^{k-i} \in \mathbb{Z}[t]
  \]
  associated to the series in~\eqref{equ:from-Anick} satisfies
  $r_i \ge 0$ for $1 \le i \le e$.  Hence $f$ has precisely one
  positive real root, namely $\lambda$, and thus cannot be divisible
  by the polynomial $t^2-dt+1$ which has two real roots, namely
  $d-\varepsilon$ and $\varepsilon$.  But
  $t^2-dt+1$ is irreducible over $\mathbb{Z}$, as $d \ge 3$.  Thus
  $\lambda \ne d-\varepsilon$.

  From Lemma~\ref{lem:Demushkin-growth-of-Dn},
    \eqref{equ:c-tilde-klein} and \eqref{equ:lambda-klein} we deduce
  that
  \[
  \dens_\mathsf{D}(\mathsf{S}) = \varliminf_{n \to \infty}
  \frac{\sum_{m=1}^n \widetilde{c}_m}{\sum_{m=1}^n c_m} = 0. \qedhere
   \]
\end{proof}

\begin{proof}[Proof of
  Theorem~\ref{thm:full-spectrum-Zassenhaus-Demushkin}]
  From
  Propositions~\ref{pro:Demushkin-Lie-has-free-density-1-subalgebra}
  and~\ref{pro:Andrei-Misha} we conclude that there is a free subgroup
  $F \le_\mathrm{c} D$ such that $\mathsf{E} = \mathsf{R}_D(F)$ is a
  free restricted Lie subalgebra of $\mathsf{D} = \mathsf{R}(D)$ and
  $\hdim_D^\mathcal{Z}(F) = \dens_{\mathsf{D}}(\mathsf{E}) = 1$.  By
  Corollary~\ref{cor:fg-H-gives-fg-Lie} and
  Proposition~\ref{pro:fg-subalgs-in-Demushkin-dens-0}, every finitely
  generated subgroup $H \le_\mathrm{c} F$ has Hausdorff dimension
  $\hdim_D^\mathcal{Z}(H) = 0$.  Thus the Interval Theorem, stated at
  the end of Section~\ref{sec:introduction}, shows that
  $\hspec^\mathcal{Z}(F) = [0,1]$.
\end{proof}

\begin{proof}[Proof of
  Corollary~\ref{cor:full-spectrum-Zassenhaus-mixed}]
  From
  Lemmata~\ref{lem:densities_coincide_restricted_lie}
    and~\ref{lem:Demushkin-growth-of-Dn} it is easy to see that
  soluble factors can be neglected.  The assertion now follows
  directly from Proposition~\ref{prop:KlThZR19},
  Theorem~\ref{thm:full-spectrum-Zassenhaus-free} and
  Theorem~\ref{thm:full-spectrum-Zassenhaus-Demushkin}.
\end{proof}

\section{Normal and finitely generated Hausdorff
  spectra} \label{sec:normal-spectra}

In this section we prove Theorems~\ref{thm:direct_prods},
\ref{thm:direct_prods_free_Dem} and~\ref {thm:direct_prods_free}.
Recall from the introduction that the \emph{normal Hausdorff spectrum}
of a finitely generated pro-$p$ group~$G$, with respect to a
filtration~$\mathcal{S}$, is
$\hspec_\trianglelefteq^\mathcal{S}(G) = \{\hdim_G^\mathcal{S}(H) \mid
H \trianglelefteq_\mathrm{c} G\}$.

\begin{Lem} \label{lem:[N,H]} Let $H$ be a finitely generated pro-$p$
  group of positive rank gradient, and let
  $N \trianglelefteq_\mathrm{c} H$ be an infinite normal subgroup.
  Then,
  $[N,H] = \langle [x,y] \mid x \in N, \, y \in H \rangle
  \trianglelefteq_\mathrm{c} H$ is also infinite.
\end{Lem}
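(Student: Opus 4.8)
The plan is to argue by contradiction: assume $[N,H]$ is finite and show that $H$ cannot then have positive rank gradient. The first step is to reduce to the case of an infinite \emph{central} subgroup. Since $N \le H$ we have $[N,N] \le [N,H]$, so finiteness of $[N,H]$ forces $\bar N = N/[N,H]$ to be abelian, while $[\bar N, \bar H] = 1$ shows that $\bar N$ is central in $\bar H = H/[N,H]$; moreover $\bar N$ is infinite because $N$ is infinite and $[N,H]$ is finite. By Lemma~\ref{Lem:quotient_by_finite_subgroup_has_positive_rg} we have $\RG(\bar H) = \lvert [N,H]\rvert\,\RG(H) > 0$. It therefore suffices to prove that a finitely generated pro-$p$ group $\bar H$ of positive rank gradient admits no infinite abelian central subgroup $\bar N$.

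To finish I would split according to whether $\bar N$ is torsion. The torsion case is the crux and is dispatched by a \emph{reverse} application of Lemma~\ref{Lem:quotient_by_finite_subgroup_has_positive_rg}. An infinite torsion abelian pro-$p$ group contains finite subgroups $A_k$ with $\lvert A_k\rvert \to \infty$, and each $A_k$ is central, hence normal, in $\bar H$. Lemma~\ref{Lem:quotient_by_finite_subgroup_has_positive_rg} gives $\RG(\bar H/A_k) = \lvert A_k\rvert\,\RG(\bar H) \to \infty$. But the Schreier bound (taking the open subgroup to be the whole group) yields $\RG(M) \le \dd(M)-1$ for every finitely generated pro-$p$ group $M$, so $\RG(\bar H/A_k) \le \dd(\bar H/A_k)-1 \le \dd(\bar H)-1$ is bounded, a contradiction.

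In the remaining case $\bar N$ contains an element $z$ of infinite order, so $\overline{\langle z\rangle} \cong \Z_p$ is a closed, central, procyclic (in particular finitely generated) infinite subgroup of $\bar H$. Being central it is normal, so the finitely generated subgroup $\overline{\langle z\rangle}$ contains an infinite normal subgroup of $\bar H$, namely itself; by Theorem~\ref{Thm:posRG_normal_sgps_inf_gen}, implication $(2)\Rightarrow(3)$, it must be open in $\bar H$. Hence $\bar H$ is virtually $\Z_p$, that is, $1$-dimensional $p$-adic analytic, and so has rank gradient $0$ (its open subgroups are uniformly boundedly generated), contradicting $\RG(\bar H) > 0$. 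One can avoid Theorem~\ref{Thm:posRG_normal_sgps_inf_gen} here by a direct estimate along a filtration $\bar H_i$: writing $\pi\colon \bar H \to \bar H/\overline{\langle z\rangle}$, the inequality $\dd(\bar H_i) \le \dd(\pi(\bar H_i))+1$ combined with $\lvert \bar H:\bar H_i\rvert = \lvert \bar H/\overline{\langle z\rangle} : \pi(\bar H_i)\rvert\cdot\lvert \overline{\langle z\rangle} : \overline{\langle z\rangle}\cap \bar H_i\rvert$ drives $(\dd(\bar H_i)-1)/\lvert \bar H:\bar H_i\rvert$ to $0$.

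The step I expect to be the main obstacle is precisely the torsion case: from an infinite central subgroup one cannot in general extract a copy of $\Z_p$, so the Greenberg-type Theorem~\ref{Thm:posRG_normal_sgps_inf_gen} does not apply directly. The key insight that removes this obstacle is that quotienting by a large finite central subgroup \emph{multiplies} the rank gradient by its order, which is incompatible with the universal upper bound $\RG \le \dd(\bar H)-1$; this is what makes the otherwise delicate infinitely generated, torsion situation elementary.
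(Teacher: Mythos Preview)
Your proof is correct and follows essentially the same route as the paper's. Both arguments reduce to an infinite central subgroup via Lemma~\ref{Lem:quotient_by_finite_subgroup_has_positive_rg}, use Theorem~\ref{Thm:posRG_normal_sgps_inf_gen} to rule out the presence of a copy of~$\Z_p$ in the centre (the paper phrases this as ``$\mathrm{Z}(G)$ is locally finite''), and then bound the order of finite central subgroups by combining $\RG(G/K) = \lvert K\rvert\,\RG(G)$ with $\RG \le \dd-1$. Your explicit torsion/non-torsion case split is just a reorganisation of the paper's two-sentence argument.
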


\begin{proof}
  Assume for a contradiction that $[N,H]$ is finite.  By
  Lemma~\ref{Lem:quotient_by_finite_subgroup_has_positive_rg},
  $G = H/[N,H]$ has positive rank gradient, and
  $N/[N,H] \subseteq \mathrm{Z}(G)$ shows that $G$ has infinite
  centre~$\mathrm{Z}(G)$.

  On the other hand, Theorem~\ref{Thm:posRG_normal_sgps_inf_gen}
  implies that $\mathrm{Z}(G)$ is locally finite.  For any finite
  central subgroup $K \trianglelefteq_\mathrm{c} G$, the rank gradient
  of $G/K$ is bounded by $d(G)-1$; thus
  Lemma~\ref{Lem:quotient_by_finite_subgroup_has_positive_rg} implies
  that $\mathrm{Z}(G)$ is finite; compare~\cite[Thm.~4]{AbJZNi11}.
\end{proof}

\begin{Prop} \label{prop:all-normals-direct-products} Let
  $G = H_1 \times \ldots \times H_r$ be a non-trivial direct product
  of finitely generated pro-$p$ groups $H_j$ of positive rank
  gradient, and let $N \trianglelefteq_\mathrm{c} G$.  Then there
  exist normal subgroups $K_j, N_j \trianglelefteq_\mathrm{c} H_j$,
  for $1 \le j \le r$, such that
  \[
  K_1 \times \ldots \times K_r \le N \le N_1 \times \ldots \times N_r
  \]
  and $K_j$ is infinite if and only if $N_j$ is infinite, for each
  $j \in \{1,\ldots,r\}$.
\end{Prop}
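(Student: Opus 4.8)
The plan is to take the two most natural candidates and verify the assertion for them. I would set $N_j = \pi_j(N)$, the image of $N$ under the projection $\pi_j \colon G \to H_j$, and $K_j = N \cap H_j$, where $H_j$ is identified with the $j$-th direct factor $1 \times \cdots \times H_j \times \cdots \times 1$ of~$G$. Both are closed: $\pi_j(N)$ is the continuous image of a compact set, and $N \cap H_j$ is an intersection of closed sets. Both are normal in~$H_j$: the subgroup $N_j$ is the image of the normal subgroup $N$ under the surjective homomorphism $\pi_j$, while $K_j$ is normal in~$G$ and hence in the subgroup $H_j \le_\mathrm{c} G$. With these choices the two containments are immediate: each $K_j$ lies in~$N$, so every product $k_1 \cdots k_r$ with $k_j \in K_j$ lies in the subgroup~$N$, giving $K_1 \times \cdots \times K_r \le N$; and $N \subseteq \prod_j \pi_j(N) = N_1 \times \cdots \times N_r$ holds for any normal subgroup of a direct product.

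It then remains to establish the equivalence ``$K_j$ infinite $\iff$ $N_j$ infinite'' for each fixed~$j$. One direction is a triviality that I would dispose of first: if $x \in K_j = N \cap H_j$, then $x = \pi_j(x) \in \pi_j(N) = N_j$, so $K_j \subseteq N_j$, and in particular $K_j$ infinite forces $N_j$ infinite.

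The substance lies in the converse, and this is where positive rank gradient enters, via Lemma~\ref{lem:[N,H]}. The key point I would isolate is the inclusion $[N_j, H_j] \subseteq K_j$. To prove it, fix $n \in N_j$ and choose a preimage $\tilde n \in N$ with $\pi_j(\tilde n) = n$; for any $h \in H_j$, regarded as the element of~$G$ supported in the $j$-th coordinate, the commutator $[\tilde n, h]$ lies in~$N$ by normality, while a coordinate-wise computation in the direct product shows that $[\tilde n, h]$ is supported only in the $j$-th coordinate and equals $[n,h]$ there, since $h$ commutes with all other coordinates of~$\tilde n$. Hence $[n,h] \in N \cap H_j = K_j$, and, as $K_j$ is closed, $[N_j, H_j] \subseteq K_j$. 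Since $H_j$ is finitely generated of positive rank gradient and $N_j \trianglelefteq_\mathrm{c} H_j$ is infinite, Lemma~\ref{lem:[N,H]} yields that $[N_j, H_j]$ is infinite, whence $K_j$ is infinite as well.

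The only genuinely delicate feature is that last step, but the hard analytic work has already been outsourced to Lemma~\ref{lem:[N,H]}, whose proof rests on Theorem~\ref{Thm:posRG_normal_sgps_inf_gen} and Lemma~\ref{Lem:quotient_by_finite_subgroup_has_positive_rg}. I therefore expect the main obstacle to be cleared before reaching this proposition; granting Lemma~\ref{lem:[N,H]}, everything reduces to the elementary commutator identity $[\tilde n, h] = [n,h]$ in a direct product together with the bookkeeping above. Since the argument for each index~$j$ is independent and self-contained, the presence of soluble or finite factors among the $H_j$ causes no complication.
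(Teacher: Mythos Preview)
Your argument is correct and follows essentially the same approach as the paper: the paper also takes $N_j = \pi_j(N)$ and then invokes Lemma~\ref{lem:[N,H]}. The only cosmetic difference is that the paper sets $K_j = [N,H_j] = [N_j,H_j]$ directly, whereas you take the larger subgroup $K_j = N \cap H_j$ and then prove the inclusion $[N_j,H_j] \subseteq N \cap H_j$; either choice works, and the substance of the proof is identical.
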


\begin{proof}
  For $1 \le j \le r$, let $N_j$ be the image of $N$ under the natural
  projection $G \to H_j$ and set $K_j = [N,H_j] = [N_j,H_j]$.  Now
  apply Lemma~\ref{lem:[N,H]}.
\end{proof}

\begin{proof}[Proof of Theorem~\ref{thm:direct_prods}]
  Consider first normal subgroups
  $N \trianglelefteq_\mathrm{c} G = H_1 \times \ldots \times H_r$ that
  decompose as $N = N_1 \times \ldots \times N_r$, with
  $N_j \trianglelefteq_\mathrm{c} H_j$ for $1 \le j \le r$.  Set
  $J = \{ j \mid 1 \le j \le r, \, \lvert N_j \rvert = \infty \}$.  A
  straightforward calculation, based on
  Proposition~\ref{Prop:normal_subgroups_hdim_1} and the fact that the
  the Frattini series~$\mathcal{F}$ of $G$ admits a direct product
  decomposition, yields
  $\hdim_G^\mathcal{F}(N) = \sum_{j \in J} \alpha_j$.  Finally, by
  Proposition~\ref{prop:all-normals-direct-products}, all normal
  subgroups of $G$ have Hausdorff dimension of the described form.
\end{proof}

\begin{proof}[Proof of Theorem~\ref{thm:direct_prods_free_Dem}]
  We argue as in the proof of Theorem~\ref{thm:direct_prods}, using
  the extension of Proposition~\ref{Prop:normal_subgroups_hdim_1} that
  we provided in the proof of
  Theorem~\ref{thm:full_spec_iterated_verbal}.
\end{proof}

\begin{proof}[Proof of Theorem~\ref{thm:direct_prods_free}]
  Clearly, we may assume that 
  \[
  d(F_j) = d \quad \text{for $1 \le j \le t$} \qquad \text{and}\qquad  d(F_j)
  < d \quad \text{for $t+1 \le j \le r$.}
  \]

  If $G$ is abelian, then $d=1$ and
  $F_1 \cong \ldots \cong F_t \cong \Z_p$, while the remaining factors
  are trivial.  The filtrations $\mathcal{S}$ and $\mathcal{Z}$
  consist of selected terms of the $p$-power series, possibly with
  repetitions.  It is easy to verify that closed (normal) subgroups
  $H \le_\mathrm{c} G$ have the expected Hausdorff dimension
  $\hdim_G^\mathcal{S}(H) = \hdim_G^\mathcal{Z}(H) = \dim(H) / \dim(G)$;
  compare~\cite[Thm.~1.1]{BaSh97}.

  Now suppose that $d \ge 2$, equivalently that $F_1$ is non-abelian.
  First we consider the iterated verbal filtration
  $\mathcal{S} \colon G_i$, $i \in \N_0$.  Suppose that, for
  $i \in \N$, the group $G_i$ is generated by all values of group
  words in a set~$\mathcal{W}_i$, say, and write
  $G_i = \mathcal{W}_i(G_{i-1})$ for short.  Let $1 \le j \le r$.  By
  Theorem~\ref{thm:direct_prods_free_Dem}, it suffices to prove that
  \begin{equation}\label{equ:alpha-j-claim}
    \alpha_j  = \hdim_G(F_j) =
    \begin{cases}
      \nicefrac{1}{t} & \text{if $1 \le j \le t$,} \\
      0 & \text{if $t < j \le r$.}
    \end{cases}
  \end{equation}

  Suppose that $t < k \le r$, thus $d = d(F_1) > d(F_k) = \tilde d$.
  We may regard $F_k = \langle x_1, \ldots, x_{\tilde d} \rangle$ as a
  finitely generated subgroup of infinite index
  in~$F_1 = \langle x_1, \ldots, x_d \rangle$.  Observe that, for
  $i \in \N$,
  \[
  F_k \cap \mathcal{W}_i ( \mathcal{W}_{i-1} ( \cdots (
  \mathcal{W}_1(F_1)) \cdots)) = \mathcal{W}_i ( \mathcal{W}_{i-1} (
  \cdots ( \mathcal{W}_1(F_k)) \cdots)).
  \]
  Hence the extension of Proposition~\ref{Prop:fg_subgroups_hdim_0}
  provided in the proof of Theorem~\ref{thm:full_spec_iterated_verbal}
  yields
  \[
  \lim_{i \to \infty} \frac{\log_p \lvert F_k : \mathcal{W}_i (
    \mathcal{W}_{i-1} ( \cdots ( \mathcal{W}_1(F_k)) \cdots))
    \rvert}{\log_p \lvert F_1 : \mathcal{W}_i ( \mathcal{W}_{i-1} (
    \cdots ( \mathcal{W}_1(F_1)) \cdots)) \rvert} = 0.
  \]
  Thus \eqref{equ:alpha-j-claim} follows from
  \begin{multline*}
    \hdim_G(F_j) \\
    = \varliminf_{ i\to \infty} \frac{\log_p \lvert F_j :
      \mathcal{W}_i ( \cdots ( \mathcal{W}_1(F_j)) \cdots) \rvert}{t
      \log_p \lvert F_1 : \mathcal{W}_i ( \cdots ( \mathcal{W}_1(F_1))
      \cdots) \rvert + \sum_{k=t+1}^r \log_p \lvert F_k :
      \mathcal{W}_i ( \cdots ( \mathcal{W}_1(F_k)) \cdots) \rvert}. 
  \end{multline*}

  Finally, we consider the Zassenhaus series~$\mathcal{Z}$.
  Lemma~\ref{lem:densities_coincide_restricted_lie} implies that, for
  $t < j \le r$,
  \begin{equation*} \label{equ:small-factors-irrelevant} \lim_{i \to
  	\infty} \frac{\log_p \lvert F_j : Z_i(F_j) \rvert}{\log_p \lvert
  	F_1 : Z_i(F_1) \rvert} = 0.
  \end{equation*}
  Suppose that
  $N \trianglelefteq_\mathrm{c} G = F_1\times\ldots\times F_r$
  decomposes as $N = N_1\times\ldots\times N_r$, and let
  $J=\{ j \mid 1 \le j \le t, \, \lvert N_j \rvert = \infty \}$.
  Propostion~\ref{prop:subideals_density_1} shows that, for
  $j \in J$,
  \begin{equation*} \label{equ:small-factors-irrelevant} \lim_{i \to
  	\infty} \frac{\log_p \lvert N_j Z_i(F_j) : Z_i(F_j) \rvert}{\log_p \lvert
  	F_1 : Z_i(F_1) \rvert} = 1.
  \end{equation*}
  Thus we obtain
  \begin{align*}
    \hdim_G^{\mathcal{Z}}(N) %
    & = \varliminf_{ i\to \infty} \frac{\sum_{j\in J}\log_p\lvert N_j Z_i(F_j)
      : Z_i(F_j) \rvert + \sum_{k\notin J} \log_p \lvert N_k Z_i(F_k)
      : Z_i(F_k) \rvert}{\sum_{j=1}^t \log_p \lvert F_j :
      Z_i(F_j) \rvert +
      \sum_{j=t+1}^r \log_p \lvert F_j : Z_i(F_j) \rvert}
      \\
    & = \varliminf_{ i\to \infty} \frac{\sum_{j\in J}\log_p\lvert N_j Z_i(F_j)
      : Z_i(F_j) \rvert + \sum_{k\notin J} \log_p \lvert N_k Z_i(F_k)
      : Z_i(F_k) \rvert}{t \log_p \lvert F_1 : Z_i(F_1) \rvert
      }  \\
    & = \frac{\lvert J\rvert}{t}.
  \end{align*}
  By Proposition~\ref{prop:all-normals-direct-products}, all normal
  subgroups of $G$ have Hausdorff dimension of this
  form.
\end{proof}

There is also a natural interest in the \emph{finitely generated
  Hausdorff spectrum} of a finitely generated pro-$p$ group~$G$, with
respect to a filtration~$\mathcal{S}$, defined as
\[
\hspec_\mathrm{fg}^\mathcal{S}(G) = \{\hdim_G^\mathcal{S}(H) \mid H
\le_\mathrm{c} G \text{ finitely generated} \};
\]
compare~\cite[\S 4.7]{Sh00} and \cite{KlThxx}.

Proposition~\ref{Prop:fg_subgroups_hdim_0}, %
its extension (given in the proof of Theorem~\ref{thm:full_spec_iterated_verbal}) and Corollary~\ref{cor:fg-H-gives-fg-Lie} %
show that
\[
\hspec_\mathrm{fg}^\mathcal{S}(G) = \{0,1\},
\]
provided that
\begin{enumerate}[$\circ$]
\item $G$ is a finitely generated pro-$p$ group of positive rank
  gradient, equipped with the Frattini series
  $\mathcal{S} = \mathcal{F}$, or
\item $G$ is a finitely generated non-abelian free pro-$p$ group or a
  non-soluble Demushkin pro-$p$ group, equipped with an iterated
  verbal filtration $\mathcal{S}$, or
\item $G$ is a finitely generated free pro-$p$ group, equipped with
  the Zassenhaus series $\mathcal{S} = \mathcal{Z}$.
\end{enumerate}

Surprisingly, the description of the finitely generated Hausdorff
spectra of finite direct products $G = F_1 \times \ldots \times F_r$
of non-abelian free pro-$p$ groups $F_j$, with respect to the Frattini
series~$\mathcal{F}$ (or more general iterated verbal filtrations),
appears to be more difficult than expected.  For simplicity, suppose
that $G = F \times F$ is the direct product of two copies of a free
pro-$p$ group $F$ with $d(F) = 2$.  It is easily seen that
$\hspec_\mathrm{fg}^\mathcal{F}(G) \supseteq \{0,\nicefrac{1}{2},1\}$.
However, the reverse inclusion does not seem to be clear, due to the
existence of `diagonal subgroups' of the form
\[
H_\varphi = \{ (x, x\varphi) \mid x \in U \},
\]
where $\varphi \colon U \to V$ is an isomorphism between open
subgroups $U, V \le_\mathrm{o} F$, necessarily of the same index
$\lvert F : U \rvert = \lvert F : V \rvert$, but otherwise
unrestricted.

\begin{Qu}
  Is it true that diagonal subgroups
  $H_\varphi \le_\mathrm{c} F \times F = G$ have Hausdorff dimension
  $\hdim_G^\mathcal{F}(H_\varphi) = \nicefrac{1}{2}$ with respect to
  the Frattini series~$\mathcal{F}$?
\end{Qu}


\end{document}